\tikzset{main node/.style={circle,minimum size=1em,inner
sep=0.5pt}}
\tikzset{state node/.style={circle,draw,minimum size=2em,fill=blue!20,inner
sep=0pt}}
\tikzset{small node/.style={circle,draw,minimum size=0.5em,inner
sep=2pt,font=\sffamily\bfseries}}
\setlist[enumerate,1]{leftmargin=1.8em,label=\textup{(}\alph*\textup{)}}
\newtheorem{thm}{Theorem}[section]
\newtheorem*{thm*}{Theorem}
\newtheorem{lemma}[thm]{Lemma}
\newtheorem{prop}[thm]{Proposition}
\newtheorem{corollary}[thm]{Corollary}
\newtheorem{Definition}[thm]{Definition}
\newenvironment{definition}
{\begin{Definition}\rm}{\end{Definition}}
\newtheorem{Example}[thm]{Example}
\newenvironment{example}
{\begin{Example}\rm}{\end{Example}}
\theoremstyle{definition}
\newtheorem{remark}[thm]{\textbf{Remark}}
\newcommand{\C}{\mathbb{C}}
\newcommand{\R}{\mathbb{R}}
\newcommand{\Z}{\mathbb{Z}}
\newcommand{\cala}{\mathcal{A}}
\newcommand{\cali}{\mathcal{I}}
\newcommand{\calp}{\mathcal{P}}
\newcommand{\calr}{\mathcal{R}}
\newcommand{\vea}{\varepsilon_{ab}}
\newcommand{\ve}[1]{\varepsilon_{#1}}
\DeclareMathOperator{\image}{\mathrm{im}}
\DeclareMathOperator{\id}{\mathrm{id}}
\DeclareMathOperator{\source}{\mathrm{source}}
\DeclareMathOperator{\target}{\mathrm{target}}
\DeclareMathOperator{\length}{\mathrm{length}}
\newcommand{\ra}{\rightarrow}
\newcommand{\se}{\subseteq}
\newcommand{\ip}[1]{\langle#1\rangle}
\newcommand{\inverse}{^{-1}}
\newcommand{\abs}[1]{\lvert #1 \rvert}
\newcommand\bm[1]{\begin{bmatrix}#1\end{bmatrix}}
\newcommand{\papb}{M_\alpha M_\beta}
\newcommand{\pbpa}{M_\beta M_\alpha}
\newcommand{\pa}{M_\alpha} 
\newcommand{\pb}{M_\beta}
\newcommand{\rep}{\mathrm{rep}}
\newcommand{\repq}{\mathrm{rep}_K Q} 
\newcommand{\repqr}{\mathrm{rep}_K (Q,\mathcal{I})} 
\newcommand{\repkqr}{\mathrm{rep}_K (Q,\mathcal{I})} 
\newcommand{\repcqr}{\mathrm{rep}_\C (Q,\mathcal{I})} 
\newcommand{\repqrf}{\mathrm{rep}_K (Q,\mathcal{I}_f)} 
\newcommand{\brepqrf}{\mathrm{rep}_K (\bar Q,\bar\cali_f)} 
\newcommand{\jcfil}[1]{J_C^{(#1)}}
\newcommand{\unbr}{\mathrm{Unbr}}
\begin{document}

\title[Subregular $J$-rings via quivers]{Subregular $J$-rings of Coxeter systems via quiver path algebras}

\author[I. Dimitrov]{Ivan Dimitrov}
\address{Department of Mathematics and Statistics, Queen's University}
\email{dimitrov@queensu.ca}

\author[C. Paquette]{Charles Paquette}
\address{Department of Mathematics and Computer Science, Royal Military
College of Canada}
\email{charles.paquette.math@gmail.com}

\author[D. Wehlau]{David Wehlau}
\address{Department of Mathematics and Computer Science, Royal Military
College of Canada}
\email{wehlau@rmc.ca}

\author[T. Xu]{Tianyuan Xu}
\address{Department of Mathematics, University of Colorado Boulder}
\email{tianyuan.xu@colorado.edu}

\keywords{Coxeter systems, asymptotic Hecke algebras, 
    Kazhdan--Lusztig cells, quiver
representations}

\subjclass{Primary: 20C08, 16G20; Secondary: 16D60, 20C07, 20E06.}

\vspace{-1em}
\begin{abstract}
We study the subregular $J$-ring $J_C$ of a Coxeter system $(W,S)$, a subring of Lusztig's $J$-ring. We prove that
$J_C$ is isomorphic to a quotient of the path algebra of the double quiver of $(W,S)$ by a suitable ideal that we
associate to a family of Chebyshev polynomials. As applications, we use quiver representations to study the
category mod-$A_K$ of finite dimensional right modules of the algebra $A_K=K\otimes_\Z J_C$ over an algebraically
closed field $K$ of characteristic zero. Our results include classifications of Coxeter systems for which mod-$A_K$
is semisimple, has finitely many simple modules up to isomorphism, or has a bound on the dimensions of simple
modules. Incidentally, we show that every group algebra of a free product of finite cyclic groups is Morita
equivalent to the algebra $A_K$ for a suitable Coxeter system; this allows us to specialize the classifications to
the module categories of such group algebras.
\end{abstract}

\maketitle


\section{Introduction}
\label{sec:introduction}

We study a subring of the $J$-ring of an arbitrary Coxeter system $(W,S)$.  The $J$-ring was first introduced by
Lusztig in \cite{LusztigCell2} in the case where $W$ is a Weyl or affine Weyl group to help study the
Kazhdan--Lusztig cells in $W$.  Later, Lusztig showed in \cite{LusztigHecke} that the same construction of the
$J$-ring is valid for arbitrary Coxeter systems, at least in the so-called ``equal-parameter'' case. In the
``unequal-parameter'' case, the validity of the construction relies on what has come to be known as Lusztig's
conjectures P1-P15; see \cite[Section 14.2]{Bonnafe}. We only deal with the equal-parameter case in this paper.

By definition, the $J$-ring equals the free abelian group $J=\oplus_{w\in W}\Z t_w$ as a group, and products in $J$
are given by the formula \begin{equation} \label{eq:introJprod} t_xt_y=\sum_{z\in W}\gamma_{x,y,z\inverse} t_z
\end{equation} where each coefficient $\gamma_{x,y,z\inverse}$ is a certain nonnegative integer obtained via the
Kazhdan--Lusztig basis of the Hecke algebra of $(W, S)$.  The formula endows $J$ with the structure of an
associative (but not necessarily unital) ring.  Moreover, for each \emph{two-sided Kazhdan--Lusztig cell} $E$ of
$W$, the subgroup $J_E:=\oplus_{w\in E} \Z t_w$ of $J$ is a subring of $J$. In this paper, we focus on the ring
$J_C$ where $C$ is a particular two-sided cell of $W$ called the \emph{subregular cell}. This cell consists of all
the non-identity elements in $W$ that are \emph{rigid} in the sense that they each have a unique reduced
expression; see \cite{LusztigSubregular} and \cite{Xu}. We call $J_C$ the \emph{subregular $J$-ring} and study the
structure and representations of $J_C$.

Also called the \emph{asymptotic Hecke algebra}, the $J$-ring may be viewed as a limit of the Hecke algebra of $W$
in the sense of \cite{LusztigLimit}.  As such, the $J$-ring has been an important tool for studying Hecke algebras
and reductive groups; see, for example, \cite{LusztigCellIV}, \cite{GeckDecomposition}, \cite{GeckCellular} and
\cite{LusztigSpecialRep}.  Besides its applications, the structure of the $J$-ring itself has also been studied
extensively. Notable results include the following: Bezrukavnikov, Finkelberg, and Ostrik studied a categorical
version of the $J$-ring  in \cite{BFOIII} and used it to compute explicitly the structure of the ring $J_E$ for
each two-sided cell $E$ in $W$; Braverman and Kazhdan showed in \cite{BravermanKazhdan} that $J$ is isomorphic to a
certain subalgebra of the Harish-Chandra Schwartz algebra of a reductive group; by using a generalization of the
Robinson--Schensted algorithm called the \emph{affine matrix-ball construction}, Kim and Pylyavskyy gave a
canonical presentation for the $J$-ring in the special case where $W$ is an (extended) affine symmetric group in
\cite{MatrixBall}, extending the work of Xi in \cite{Xi} for the same case.  

It is worth noting that the results on the structure of the $J$-ring mentioned above are all restricted to Weyl or
(extended) affine Weyl groups. On the other hand, the $J$-ring makes sense for an arbitrary Coxeter system, so it
is natural to wonder what the structure of the $J$-ring is for more general Coxeter systems.  Indeed, in
Kazhdan--Lusztig theory it can often be interesting to study Coxeter systems in the full generality. One such
indication comes from the proof of the famous ``positivity conjecture'' of Kazhdan and Lusztig, which states that
all coefficients of so-called \emph{Kazhdan--Lusztig polynomials} are nonnegative integers.  After its first
appearance in \cite{KL} in 1979, the conjecture was proved along with other related deep results for Weyl and
affine Weyl groups in the next two years by Kazhdan--Lusztig \cite{KL2}, Beilinson--Bernstein
\cite{BeilinsonBernstein} and Brylinski--Kashiwara \cite{BrylinskiKashiwara}, via geometric methods involving local
intersection cohomology of Schubert varieties, $D$-modules and perverse sheaves. The proof for the general case
came much later: building upon the work of Soergel in \cite{Soergel1} \cite{Soergel2} \cite{Soergel3}, Elias and
Williamson proved the positivity conjecture for arbitrary Coxeter systems in \cite{EliasWilliamson} in 2014.  In
their work, they introduced a graphical calculus and a type of Hodge theory for the Soergel category, each of which
is interesting in its own right; see \cite{SoergelCalculus} and \cite{WilliamsonHodge}.

As was the case for the positivity conjecture, a disparity exists between what is known about the $J$-rings of Weyl
or affine Weyl groups and the $J$-rings of other Coxeter systems. With the exception of Alvis' work on the Coxeter
group of type $H_4$ in \cite{AlvisH4}, the structures of the $J$-rings of non-Weyl Coxeter groups remain largely
unexplored. One obstacle to understanding $J$-rings in general is the diffculty in computing the structure
constants of Hecke algebras with respect to their Kazhdan--Lusztig bases,  which are necessary for obtaining the
coefficients $\gamma_{x,y,z\inverse}$ in Equation \eqref{eq:introJprod}.  As we will show, however, it is possible
to circumvent this obstacle if we restrict from the $J$-ring to the subregular $J$-ring. In \cite{Xu}, the last
named author gave a description of products of the form $t_xt_y$ in $J_C$ that does not involve Kazhdan--Lusztig
theory. In the present paper, we use this description to show that $J_C$ is isomorphic to certain quotients of the
path algebra of a quiver, then use quiver representations to study representations of $J_C$.  Roughly speaking, the
reason why we can understand $J_C$ in full generality, in contrast to the entire $J$-ring, is that the rigidity of
the elements of $C$ makes $C$ and $J_C$ more amenable to combinatorial analysis. It seems interesting that a
similar contrast is also visible in the book \cite{Bonnafe} by Bonnaf\'e, where he singles out the subregular cell
in Chapters 12 and 13 (he calls the cell the \emph{submaximal cell}) and exploits its rich combinatorics in his
investigation of various Kazhdan--Lusztig objects attached to the cell, including the so-called \emph{cell module}
of $C$ and its connection to the reflection representation of the Hecke algebra of $(W,S)$.  

Let us elaborate on how $J_C$ relates to quivers. Recall that every Coxeter system $(W,S)$ corresponds to a unique
Coxeter diagram $G$ with vertex set $S$ and edge set $\{a-b:a,b\in S, m(a,b)\ge 3\}$.  We define the \emph{double
quiver} of $(W,S)$ to be the directed graph $Q=(Q_0,Q_1)$ with vertex set $Q_0=S$ and edge set $Q_1=\{a\ra b:
a,b\in S, m(a,b)\ge 3\}$, where we have a pair of arrows $a\ra b$ and $b\ra a$ arising from each edge $a-b$ in $G$.
Next, we consider the path algebra $\Z Q$ of $Q$ over $\Z$ and associate to each suitable family $\{f_n:n\in
\Z_{\ge 2}\}$ of polynomials an ideal $\cali_f^\Z$ in $\Z Q$ called an \emph{evaluation ideal} of $\{f_n: n\in
\Z_{\ge 2}\}$ (Definition \ref{def:ArrowRelations}).  Our first main result, Theorem \ref{thm:JcIso}, establishes
an algebra isomorphism between $J_C$ and the quotient $\Z Q/\cali_u^\Z$ where $\cali_u^\Z$ is the evaluation ideal
of a family $\{u_n\in \Z[x]:n\ge 2\}$ of ``Chebyshev polynomials''.

Fixing an algebraically closed field $K$ of characteristic zero, we extend the result that $J_C\cong \Z
Q/\cali_u^\Z$ in two ways (see Remark \ref{rmk:fieldAssumptions} for a discussion about assumptions on the field
$K$). First, in Theorem \ref{thm:IsoQuotients}, we show that upon an extension of scalars we may alter the family
$\{u_n\}$ without changing the isomorphism type of the quotient of the path algebra by the evaluation ideal.  More
precisely, we show that for any two \emph{uniform} families of polynomials $\{f_n\}, \{g_n\}$ over $K$ (Definition
\ref{def:DistPoly}), we have $K Q/\cali_f\cong K Q/\cali_g$ where $K Q$ is the path algebra of $Q$ and $\cali_f,
\cali_g$ are evaluation ideals of $KQ$ constructed from $\{f_n\}, \{g_n\}$.  The Chebyshev polynomials $\{u_n\}$
form a uniform family, and the result enables us to realize the algebra $A_K:=K \otimes_\Z J_C$ as a quotient $K
Q/\cali_f$ where the ideal $\cali_f$ is generated by elements which can take very simple forms; see
\autoref{sec:contractionExamples}.  Together, Theorems \ref{thm:JcIso} and \ref{thm:IsoQuotients} generalize
Example 6.10 of Diaz-Lopez's paper \cite{DiazLopez}.  That paper cites the example as its main motivation and
remarks that the example suggests a stronger connection between path algebras and asymptotic Hecke algebras. We
hope our result can be viewed as further support for such a connection. 

In our second extension of Theorem \ref{thm:JcIso}, we develop a procedure to modify a quiver $Q$ to a new quiver
$\bar Q$ such that the algebras $K Q/\cali_f$ and $K \bar Q/\bar\cali_f$  are Morita equivalent for any uniform
family of polynomials $\{f_n\}$, where $\bar\cali_f$ is the evaluation ideal of $K\bar Q$ associated to $\{f_n\}$;
see Theorem \ref{thm:MoritaEquivalence}. We call the procedure a \emph{quiver contraction} and will often apply it
iteratively, starting from the double quiver of a Coxeter diagram. Quiver contractions reveal certain interesting
algebras that are Morita equivalent to algebras $A_{K}$ associated to Coxeter systems, such as the Laurent
polynomial ring $K[t,t\inverse]$ or group algebras of free products of finite cyclic groups; see Examples
\ref{eg:LaurentPolynomials} and \ref{eg:freeProduct}.  In addition, we use quiver contractions to justify certain
assumptions on Coxeter systems in the study of representations of $A_K$. For example, for any Coxeter system whose
Coxeter diagram $G$ is a tree, quiver contractions allow us to assume that $G$ contains no simple edges when
studying representations of $A_K$; see Example \ref{eg:treeContraction}.

Theorem \ref{thm:JcIso} and its extensions allow us to study representations of the subregular $J$-ring via
quivers.  More precisely, we use the double quiver $Q$ to study the category mod-$A_K$ of finite dimensional right
modules of the algebra $A_K$.  Representations of the $J$-ring and of rings of the form $J_E$ (where $E$ is a
two-sided cell) are not only interesting on their own but also intimately related to representations of $W$ and its
Hecke algebra; see \cite{LusztigHecke}, \cite{LusztigJInvolutions}, \cite{LusztigSpecialRep}, \cite{GeckCellular}
and \cite{Pietraho}.  On the other hand, quivers arise naturally in many areas of mathematics and have close
connections to the representation theory of finite dimensional algebras, Kac--Moody algebras, quantum groups, and
so on; see \cite{SavageQuiver} and \cite{Schiffler}. 

To study mod-$A_K$ via $Q$, we use the well-known fact that for each ideal $\cali$ in $K Q$, the category of
modules of the quotient $K Q/\cali$ is equivalent to the category of \emph{representations of $Q$} that
\emph{satisfy} the relations in $\cali$ (see \autoref{sec:QuiverReps}).
Our main results are Theorems \ref{thm:repThm1} and \ref{thm:repThm2}, which characterize in terms of the Coxeter
diagram $G$ when the category mod-$A_K$ is semisimple, contains finite many simple modules, or has a bound on the
dimensions of simple modules.  In a sense, the chacterizations are similar to those of the representation types of
quivers given by the celebrated Gabriel's Theorem (see \cite{DDPW}). Since we can use quiver contractions to show
that every group algebra of a free product of finite cyclic groups is Morita equivalent to the algebra $A_K$ for a
suitable Coxeter system (Example \ref{eg:freeProduct}), Theorems \ref{thm:repThm1} and \ref{thm:repThm2} lead to
similar characterizations for the module categories of such group algebras, which may be of independent interest as
they are stated without mention of Coxeter systems or Kazhdan--Lusztig theory; see Remark \ref{rmk:freeProductRep}
and Proposition \ref{prop:freeProductRep}.

The rest of the paper is organized as follows. In Section \ref{sec:background}, we recall the relevant background
on Coxeter systems, subregular $J$-rings, path algebras, and quiver representations. In Section
\ref{sec:QuiverRealization}, we define uniform families of polynomials $\{f_n\}$ and their associated evaluation
ideals, then we realize $J_C$ and the algebra $A_K$ as quotients of path algebras by suitable evaluation ideals via
Theorems \ref{thm:JcIso} and \ref{thm:IsoQuotients}. Section \ref{sec:A-modTools} deals with quiver contractions
and its main result is Theorem \ref{thm:MoritaEquivalence}, which asserts that $K Q/\cali_f$ is Morita equivalent
to $K \bar Q/\bar\cali_f$ if the quiver $\bar Q$ is obtained from $Q$ via a sequence of contractions. We define
contractions in \autoref{sec:contractionDefinitions}, give detailed examples of contractions in
\autoref{sec:contractionExamples} and prove Theorem \ref{thm:MoritaEquivalence} in \autoref{sec:MoritaEquivalence},
then we analyze and give examples of representations of contracted quivers in \autoref{sec:repAnalysis}. Finally,
we state and prove the results on mod-$A_K$ in Section \ref{sec:mod-AResults}. Most of the examples from
\autoref{sec:contractionExamples} and \autoref{sec:repAnalysis} will be used in the proofs.

\subsection*{Acknowledgements}
The first three named authors are supported by the National Sciences and Engineering Research Council of Canada.
The second and third named authors are also supported by the Canadian Defence Academy Research Programme. We thank
R. M. Green for reading a draft of the paper and for his helpful comments.

\section{Background}
\label{sec:background}

\subsection{Coxeter Systems}
\label{sec:CoxeterSystems}

A \emph{Coxeter system} is a pair $(W,S)$ where $S$ is a finite set and $W$ is the group given by the presentation
\[
    W=\ip{S\,\vert\, (ab)^{m(a,b)}=1 \; \text{for all $a,b\in S$ with
    $m(a,b)<\infty$}},
\]
where $m$ denotes a map $m:S\times S\ra \Z_{\ge 1}\cup \{\infty\}$ such that for all $a,b\in S$, we have
$m(a,b)=m(b,a)$, and  $m(a,b)=1$ if and only if $a=b$. These conditions imply that $a^2=1$ for all $a\in S$ and
that
\begin{equation}
    \label{eq:BraidRelation}
    aba\dots=bab\dots, 
\end{equation}
where both sides contain $m(a,b)$ factors, for every two distinct generators $a,b\in S$ with $m(a,b)<\infty$. We
call each side of Equation \eqref{eq:BraidRelation} an  $\{a,b\}$-\emph{braid} and call the equation a \emph{braid
relation}.

Each Coxeter system $(W,S)$ can be encoded via its \emph{Coxeter diagram}, the weighted, undirected graph $G$ whose
vertex set is $S$, whose edge set is $\{\{a,b\}: m(a,b)\ge  3\}$, and where the weight of an edge $\{a,b\}$ is
$m(a,b)$. An edge with weight $m$ in $G$ is \emph{simple} if $m=3$ and is \emph{heavy} otherwise. When drawing $G$,
we label each edge with its weight except for simple edges.  A Coxeter system $(W,S)$ is said to be
\emph{irreducible} if its Coxeter diagram $G$ is connected and \emph{reducible} otherwise. 

For the rest of the paper, we let $(W,S)$ be an irreducible Coxeter system and let $G$ be its Coxeter diagram. The
irreduciblity assumption is made to simplify our statements, as the reducible case can be easily derived from the
irreducible case for all the relevant results; see Remark \ref{rmk:IrreducibleAssumption}.

\subsection{The Subregular J-ring}
\label{sec:SubregularJ}

Let $S^*$ be the free monoid generated by $S$. For each element $w\in W$, the words in $S^*$ that express $w$ and
have minimal length are called the \emph{reduced words} of $w$. The common length of these words, denoted $l(w)$,
is called the \emph{length} of $w$. By the well-known Matsumoto--Tits theorem, every two reduced words of $w$ can
be obtained from each other via a finite sequence of braid relations.

An element in $W$ is called \emph{rigid} if it has a unique reduced word. In this paper we are particularly 
interested in the set
\[
    C=\{w\in W: w\neq 1, w \text{\; is rigid}\}. 
\]
 The set $C$ is known to be a \emph{two-sided
 Kazhdan--Lusztig cell} of $W$, and is called the \emph{subregular cell} or
 \emph{submaximal cell} of $W$ (see \cite{Xu} and \cite[Chapter 12]{Bonnafe}). 

\begin{remark}
(a) By the Matsumoto--Tits theorem, a word $w\in S^*$ expresses an element in $C$ if and only if $w$ is nonempty
and does not contain as a contiguous subword a word of the form $aa$ for any $a\in S$ or an $\{a,b\}$-braid for any
distinct elements $a,b\in S$. 

(b) Henceforth we will identify each element $w\in C$ with its unique reduced word. In particular, we will also use
$w$ to denote the reduced word of the element (as in Propositions \ref{prop:JcProducts} and \ref{prop:lower}, for
example).  \label{rmk:RigidityCriterion} 
\end{remark}

To define the subregular $J$-ring, we first recall the construction of the \emph{$J$-ring}, or the \emph{asymptotic
Hecke algebra}, of $(W,S)$. The construction is due to Lusztig, who defined the $J$-ring as the free abelian group
$J:=\oplus_{w\in W} \Z t_w$ and defined multiplication in $J$ by the formula 
\[ 
    t_x t_y =\sum_{z\in W}\gamma_{x,y,z\inverse} t_z 
\] 
where each coefficient $\gamma_{x,y,z\inverse}$ is a certain nonnegative integer extracted from the structure
constants for the \emph{Kazhdan--Lusztig basis} of the \emph{Iwahori--Hecke algebra} of $(W,S)$; see
\cite{LusztigCell2} and \cite[Section 18.3]{LusztigHecke}.  Lusztig showed that for each two-sided cell $E$ of $W$,
the subgroup $J_E:=\oplus_{w\in E} \Z t_w$ is in fact a subring of $J$. We define the \emph{subregular $J$-ring} to
be the subring $J_C$ of $J$ arising from the subregular cell $C$ of $W$.

While the definition of $J$ relies heavily on Kazhdan--Lusztig theory, it is shown in \cite{Xu} that we can
describe products in the subregular $J$-ring via simple manipulations of reduced words. To do so, for each pair of
distinct generators $a,b\in S$, let us call an element $w\in C$ an \emph{$\{a,b\}$-element} if $w$ lies in the
subgroup of $W$ generated by $a$ and $b$. For two words $x=\dots a_2a_1, y=b_1b_2\dots\in S^*$ with $a_1=b_1$, let
$x*y$ be the word $\dots a_2b_1b_2\dots$, the result of concatenating $x$ and $y$ and deleting one duplicate copy
of the letter $a_1=b_1$. Then products in $J_C$ behave as follows:

\begin{prop}[{\cite[Corollary 4.2, Propositions 4.4 \& 4.5]{Xu}}]
    Let $x,y$ be elements of $C$ with reduced words $x=\dots a_2a_1$ and
    $y=b_1b_2\dots$, where we take $a_2$ and $b_2$ to be nonexistant when
    $l(x)=1$ and $l(y)=1$, respectively. Then the following holds.
    \begin{enumerate}
        \item If $a_1\neq b_1$, then $t_xt_y=0$.
        \item If $a_1= b_1$ and $a_2\neq b_2$ \textup{(}including the vacuous
            cases where $a_2$ or $b_2$ do not exist\textup{)}, then $t_{x}t_y=t_{x*y}$.
        \item If $a_1=b_1$ and $x,y$ are both $\{a,b\}$-elements for
            some $a,b\in S$, then $t_xt_y$ is a linear combination of
            the form $\sum_{z\in Z} t_z$ where $Z$ is a certain set of
            $\{a,b\}$-elements.
    \end{enumerate}
    \label{prop:JcProducts}
\end{prop}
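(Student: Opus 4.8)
The plan is to pass to the Iwahori--Hecke algebra $H$ of $(W,S)$ with its Kazhdan--Lusztig basis $\{C'_w\}_{w\in W}$, and to write $C'_xC'_y=\sum_z h_{x,y,z}C'_z$ for the structure constants $h_{x,y,z}\in\Z_{\ge 0}[v+v^{-1}]$. Unwinding Lusztig's definition of the integers $\gamma$, the coefficient of $t_z$ in $t_xt_y$ is $\gamma_{x,y,z^{-1}}=[v^{a(z)}]h_{x,y,z}$, where $a$ is Lusztig's $a$-function. I will use two standing facts: (i) $a(w)=1$ for every $w\in C$, a known property of the subregular cell; and (ii) the multiplication rules $C'_sC'_w=(v+v^{-1})C'_w$ when $sw<w$, $C'_sC'_w=C'_{sw}+\sum_{v:\,sv<v}\mu(v,w)C'_v$ when $sw>w$, and their right-handed analogues, $\mu(v,w)$ being the leading coefficient of the Kazhdan--Lusztig polynomial $P_{v,w}$. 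Because a rigid element $x$ has left descent set $\{b_1\}$, where $b_1$ is its first letter, and right descent set $\{a_1\}$, where $a_1$ is its last letter, (i) and (ii) give at once that $t_s^2=t_s$ and $t_st_{s'}=0$ for distinct $s,s'\in S$, so the elements $\{t_s\}_{s\in S}$ form a family of orthogonal idempotents, and that $t_{b_1}t_x=t_x=t_xt_{a_1}$ for every $x\in C$.

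Part (a) is then immediate: if $a_1\ne b_1$, then
\[
t_xt_y=(t_xt_{a_1})(t_{b_1}t_y)=t_x(t_{a_1}t_{b_1})t_y=0.
\]

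For parts (b) and (c) I would argue by induction on $l(x)+l(y)$, assuming $a_1=b_1=s$ throughout; the base case $l(x)=1$ or $l(y)=1$ is handled directly by the rule $C'_sC'_w=(v+v^{-1})C'_w$ together with fact (i). In the inductive step one strips a generator off $x$ (say $x=x_1s$) and uses $C'_{x_1}C'_s=C'_x+\sum_{v}\mu(v,x_1)C'_v$ and $C'_sC'_y=(v+v^{-1})C'_y$ to write
\[
C'_xC'_y=(v+v^{-1})\,C'_{x_1}C'_y-\sum_{v}\mu(v,x_1)\,C'_vC'_y,
\]
reducing $t_xt_y$ to products of strictly shorter elements to which part (a) and the inductive hypothesis apply. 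The step I expect to be the main obstacle is the bookkeeping of which low-degree terms survive after applying $[v^{a(z)}]$: a factor of $(v+v^{-1})$ promotes the subleading coefficient of an otherwise-vanishing $J$-product into the answer, so one must track these subleading coefficients carefully and show that, in the end, exactly one basis element survives (with coefficient $1$) in case (b) and a set of $\{a,b\}$-elements survives, each with coefficient $1$, in case (c) -- this multiplicity-one phenomenon is the real content.

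The case (c) computation is essentially rank $2$: when $x,y$ are both $\{a,b\}$-elements, everything takes place in the Hecke subalgebra of the dihedral parabolic $\langle a,b\rangle\cong I_2(m)$ with $m=m(a,b)$, and $a(z)=1$ for every $z\in C\cap\langle a,b\rangle$. Since all Kazhdan--Lusztig polynomials of a dihedral group are trivial, one has $\mu(v,w)=1$ exactly when $v\lessdot w$, the structure constants $h_{x,y,z}$ are computed by iterating the rules in (ii), and extracting the coefficient of $v^1$ yields a multiplicity-free ``Clebsch--Gordan'' rule; this both establishes the $\{0,1\}$-coefficient bound of (c) and identifies $Z$ as the set of alternating words that begin with the first letter of $x$, end with the last letter of $y$, and have lengths $|l(x)-l(y)|+1,\ |l(x)-l(y)|+3,\ \dots,\ l(x)+l(y)-1$, truncated so as to remain inside $C$ when $m<\infty$. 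Finally, since the proposition is quoted verbatim from \cite{Xu}, one may alternatively just cite the results there.
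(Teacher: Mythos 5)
The paper offers no proof of this proposition at all: it is imported verbatim from \cite{Xu}, so your closing fallback of simply citing that reference is exactly what the paper does, and that citation by itself is sufficient. Within your direct sketch, part (a) is complete and correct given the standard inputs you invoke (associativity of $J$, closure of $J_C$ under multiplication, $a\equiv 1$ on $C$, and the fact that a rigid element's left and right descent sets are the singletons consisting of its first and last letters).

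For parts (b) and (c), however, the sketch has a genuine gap, which you have correctly located but not closed. Extracting the coefficient of $v^{a(z)}=v^{1}$ from $h_{x,y,z}=(v+v^{-1})h_{x_1,y,z}-\sum_{v}\mu(v,x_1)h_{v,y,z}$ requires the coefficient of $v^{0}$ in $h_{x_1,y,z}$ --- a subleading coefficient that the inductive hypothesis, which records only the $J$-products and hence only the top-degree coefficients, does not supply --- and the correction terms involve elements $v$ that need not lie in $C$ or even be rigid, so their contributions are not controlled by the statement being proved. That bookkeeping is the entire content of the result, so the direct argument is incomplete as written. Separately, your explicit description of $Z$ in case (c) is wrong: the truncation is the fusion-rule truncation, in which lengths are reflected about $m$ and cancelled, not merely the removal of alternating words of length at least $m$. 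For example, with $m=4$ one computes from Proposition \ref{prop:DihedralJcProducts} that $t_{aba}t_{aba}=t_a$, whereas your rule predicts $t_a+t_{aba}$. This error does not affect the proposition as stated, since the exact set $Z$ is deliberately left unspecified, but it shows the dihedral computation is more delicate than the sketch suggests. The safe and intended route is the citation.
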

\noindent Note that the first two parts of the proposition imply that $J_C$ has a unit, namely, the element
$\sum_{a\in S}t_a$. In the last part, the set $Z$ can be obtained via a \emph{truncated Clebsch--Gordan rule}, but
the exact description of $Z$ is not essential to this paper, so we omit it. Instead, we describe below the product
$t_xt_y$ from Proposition \ref{prop:JcProducts}.(c) in the special case where $l(x)=2$. The special case is in fact
equivalent to the general case because one can deduce the latter from the former by induction.

\begin{prop}
    [{\cite[Corollary 4.2]{Xu}}]
        \label{prop:DihedralJcProducts}
        Let $a,b\in S$ and let $m=m(a,b)$. Suppose that $m\ge 3$. For all
        $1<i<m$, let $w_{a,i}$ be the $\{a,b\}$ element $aba\dots$ of
        length $i$ and let $t_{a,i}=t_{w_{a,i}}$, then define $w_{b,i}=bab\dots$ and $t_{b,i}$ similarly. Then for all $1<i<m$, we have
    \begin{equation}
        \label{eq:tRecursion}
    t_{ab}t_{b,i}= 
    \begin{cases}
        t_{a,i-1}+t_{a,i+1} & \text{if $i< m-1$};\\
        t_{a,i-1} & \text{if $i=m-1$}.
    \end{cases}
    \end{equation}
\end{prop}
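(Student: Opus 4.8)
The plan is to compute the corresponding product of Kazhdan--Lusztig basis elements $c_w$ in the Iwahori--Hecke algebra $\mathcal{H}$ of $(W,S)$ and then extract the structure constants of the $J$-ring product from it by means of Lusztig's $a$-function. Write $W'=\langle a,b\rangle\le W$ for the dihedral parabolic subgroup, a Coxeter group of type $I_2(m)$; when $m<\infty$ its longest element is $w_0'=w_{a,m}=w_{b,m}$. I will use two standard facts about $W'$: every Kazhdan--Lusztig polynomial of $W'$ is trivial, so in $W'$ one has $\mu(v,w)=1$ if $v$ is covered by $w$ in the Bruhat order and $\mu(v,w)=0$ otherwise; and Lusztig's $a$-function satisfies $a(w)=1$ for every $w\in C$, while $a(w_0')\ge 2$ since $w_0'$ is neither the identity nor rigid.

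First I would record three multiplication rules in $\mathcal{H}$. As $ab>b$ and the only element strictly below $b$ is the identity, $c_ac_b=c_{ab}$. As $w_{b,i}$ begins with $b$, we have $b\,w_{b,i}<w_{b,i}$, so $c_b\,c_{w_{b,i}}=(v+v^{-1})c_{w_{b,i}}$. Finally, for $1<i<m$ the element $a\,w_{b,i}=w_{a,i+1}$ has length $i+1>i$ (where, if $i+1=m$, the symbol $w_{a,m}$ means $w_0'$), so $c_a\,c_{w_{b,i}}=c_{w_{a,i+1}}+\sum_v\mu(v,w_{b,i})\,c_v$, the sum over $v<w_{b,i}$ with $av<v$; in the Bruhat order of $W'$ the only elements covered by $w_{b,i}$ are $w_{a,i-1}$ and $w_{b,i-1}$, and of these only $w_{a,i-1}$ satisfies $av<v$. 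Multiplying these three rules, $c_{ab}\,c_{w_{b,i}}=c_ac_b\,c_{w_{b,i}}=(v+v^{-1})\bigl(c_{w_{a,i+1}}+c_{w_{a,i-1}}\bigr)$, and this is the complete expansion of the product in the Kazhdan--Lusztig basis.

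Next I would descend to the $J$-ring. From Lusztig's definition of the $J$-product and the inversion-invariance of the $a$-function, the coefficient of $t_z$ in $t_{ab}\,t_{w_{b,i}}$ equals the coefficient of $v^{a(z)}$ in the $c_z$-component of $c_{ab}\,c_{w_{b,i}}$. By the displayed formula this is zero unless $z\in\{w_{a,i-1},w_{a,i+1}\}$, where the $c_z$-component is $v+v^{-1}$. Now $w_{a,i-1}$ always lies in $C$, and so does $w_{a,i+1}$ when $i<m-1$; on such elements $a=1$ and $[v^1](v+v^{-1})=1$, contributing $t_{a,i-1}$ and $t_{a,i+1}$. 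When $i=m-1$, on the other hand, $w_{a,i+1}=w_0'$ is not in $C$, so $a(w_0')\ge 2$ and $[v^{a(w_0')}](v+v^{-1})=0$; this term vanishes, leaving only $t_{a,m-2}=t_{a,i-1}$. This is exactly the case split in \eqref{eq:tRecursion}.

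The crux of the argument is the dihedral input: the triviality of the Kazhdan--Lusztig polynomials of $W'$, which is what reduces the correction terms in $c_a\,c_{w_{b,i}}$ to the covering relations in the Bruhat order of $W'$; and the behaviour of the $a$-function, which is what forces the boundary term at $i=m-1$ to disappear on passage to $J$. Both facts are standard; indeed this proposition is \cite[Corollary~4.2]{Xu}, and the plan above is a streamlined rendering of the argument found there. (One may also recognise the $\Z$-span of $\{t_{a,i},t_{b,i}:1\le i<m\}$ inside $J_C$ as a ``Verlinde algebra'', a truncation of the representation ring of $\mathfrak{sl}_2$ in which $t_{ab}$ plays the role of the class of the two-dimensional representation and \eqref{eq:tRecursion} becomes the truncated Clebsch--Gordan rule; but making this heuristic rigorous still goes through essentially the same computation in $\mathcal{H}$.)
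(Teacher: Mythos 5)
The paper does not prove this proposition; it is imported verbatim from \cite[Corollary 4.2]{Xu}, so there is no in-paper argument to compare against. Your Hecke-algebra computation is correct and is the standard route: the three multiplication rules are right (in particular, the covers of $w_{b,i}$ in the Bruhat order of $I_2(m)$ are exactly $w_{a,i-1}$ and $w_{b,i-1}$, and only the former satisfies $av<v$), the expansion $c_{ab}c_{w_{b,i}}=(v+v^{-1})(c_{w_{a,i-1}}+c_{w_{a,i+1}})$ follows, and the passage to $J$ via the coefficient of $v^{a(z)}$ correctly kills the $w_0'$ term when $i=m-1$. Two points you lean on deserve explicit acknowledgement as inputs rather than routine facts: the $\mu$-values appearing in $c_a c_{w_{b,i}}$ are a priori computed in $W$, not in $W'$, so you need the standard restriction of Kazhdan--Lusztig polynomials to parabolic subgroups (harmless here since the Bruhat interval below $w_{b,i}$ lies in $W'$); and the identities $a(w)=1$ for $w\in C$ and $a(w)\ne 1$ for non-rigid $w\ne e$ are themselves theorems of Lusztig and Xu, not formal consequences of the definitions. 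With those citations in place the argument is complete.
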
 

The following example illustrates how Proposition \ref{prop:JcProducts} can be used to compute the product $t_xt_y$
for all $x,y\in C$: Suppose $(W,S)$ is a Coxeter system where $S=\{a,b,c\}$ and $m(a,b)=3, m(a,c)=4, m(b,c)=5$. Let
$x=abcb, y=bcbcac$. Then $x,y\in C$ by Remark \ref{rmk:RigidityCriterion}.(a).  The first two parts of Proposition
\ref{prop:JcProducts} imply that $t_y t_x=0$ and 
\[
    t_x t_y = (t_{ab}t_{bcb})(t_{bcbc}t_{cac})=t_{ab}(t_{bcb}t_{bcbc})t_{cac}.
\]
The product $t_{bcb}t_{bcbc}$ can be computed using Part (c) and turns out to
equal $t_{bc}$. Applying Part (b) again completes the computation:
\[
t_x t_y = t_{ab}t_{bc}t_{cac}=t_{abcac}.
\]
Intuitively, as the example shows, the reductions allowed by the first two parts of Proposition
\ref{prop:JcProducts} mean that the most interesting multiplication in $J_C$ happen ``locally'', for elements
within subgroups of $W$ generated by two elements. This fact is a key reason why Theorem \ref{thm:JcIso} holds.

\subsection{Path Algebras}
\label{sec:PathAlgebras}
In this and the next subsection, we recall the background on quivers, path algebras and quiver representations that
is relevant to the paper. Our main reference is \cite{Schiffler}. 

A \emph{quiver} is a directed graph $Q=(Q_0,Q_1)$ where $Q_0$ is the set of vertices and $Q_1$ is the set of
directed edges, or \emph{arrows}. The sets $Q_0$ and $Q_1$ will be finite for all quivers in this paper. For each
arrow $\alpha: a\ra b$, we call $a$ and $b$ the \emph{source} and the \emph{target} of $\alpha$ and denote them by
$\source(\alpha)$ and $\target(\alpha)$, respectively. An arrow $\alpha$ is called a \emph{loop at $a$} if
$\source(\alpha)=\target(\alpha)=a$. 

A \emph{path} on $Q$ is an element of the form $p=\alpha_1\alpha_2\dots\alpha_n$ where
$\target(\alpha_i)=\source(\alpha_{i+1})$ for all $1\le i\le n-1$; we define the \emph{source} of $p$ to be
$\source(p):=\source(\alpha_1)$ and the \emph{target} of $p$ to be $\target(p):=\target(\alpha_n)$.  To each vertex
$a\in Q_0$, we associate a special path $e_a$ called the \emph{stationary path at $a$}; we consider it as a path
that ``stays at $a$'', so in particular we have $\source(e_a)=\target(e_a)=a$. The \emph{length} of the path $p$,
denoted by $\mathrm{length}(p)$, is defined to be the number of arrows it traverses. In other words, each arrow has
length 1, each stationary path has length 0, and we have $\length(p)=\sum_{i=1}^n \length(\alpha_i)$ for each path
$p=\alpha_1\dots\alpha_n$.

Let $\calp$ be the set of all paths on $Q$, and let $R$ be a commutative ring.  The \emph{path algebra of $Q$ over
$R$}, denoted by $RQ$, is the $R$-algebra with $\calp$ as an $R$-basis and with multiplication induced by path
concatenation: for paths $p=\alpha_1\dots\alpha_m,q=\beta_1\dots\beta_n\in \calp$, we define $pq$ to be the path
$\alpha_1\dots\alpha_m\beta_1\dots\beta_n$ if $\target(p)=\source(q)$ and to be 0 otherwise. In particular, for any
path $p$ with source $a$ and target $b$, we have $e_ap=p=pe_b$ in $RQ$.  Consequently, $RQ$ contains the unit
$1=\sum_{a\in Q_0}e_a$, and we can describe $RQ$ as the algebra generated by the arrows and stationary paths in $Q$
subject only to the relations $e_ae_b=\delta_{a,b}e_a$ for all $a,b\in Q_0$ and $e_a\alpha=\alpha=\alpha e_b$ for
each arrow $\alpha: a\ra b$ in $Q_1$.

Among the elements of $RQ$, we will be especially interested in elements of the form $r=\sum_{p} c_p p\in RQ$ where
the sum is taken over a finite set of paths on $Q$ which share the same source and the same target. Following
\cite[Definition 3.1]{Schiffler}, we call such an element $r$ a \emph{uniform relation} or simply a \emph{relation}
on $Q$. We define the source and target of $r$ to be the common source and common target of the paths appearings in
it, respectively. Our first main theorem, Theorem \ref{thm:JcIso}, asserts that $J_C\cong \Z Q/\cali_f$ for a
suitable quiver $Q$ and a suitable ideal $\cali_u^\Z$ generated by a set of relations of the form
$\calr=\{r_u(\alpha):\alpha\in Q_1\}$, where each relation corresponds to an arrow in $Q$. 

\subsection{Quiver Representations}
\label{sec:QuiverReps}

Let $Q$ be a quiver and let $K$ be an arbitrary field.  We recall below some basic facts about the representation
theory of the path algebra $KQ$ and its quotients. All representations and modules we mention in this paper will be
finite dimensional.  

Let mod-$KQ$ be the category of finite dimensional right $KQ$-modules. It is well-known that mod-$KQ$ is naturally
equivalent to the category $\repq$ of finite dimensional representations of $Q$ over $K$. Here, a
\emph{representation} of a quiver $Q$ over $K$ is an assignment 
\[ M=(M_a,M_\alpha)_{a\in Q_0, \alpha\in Q_1} \]
of a $K$-vector space $M_a$ to each vertex $a$ of $Q$ and a linear map $M_\alpha: M_a\ra M_b$ for each arrow
$\alpha: a\ra b$ in $Q$; the \emph{dimension} of $M$ is defined by $\dim(M):=\sum_{a\in Q_0} \dim (M_a) $.  A
morphism $\varphi: M\ra N$ between two representations $M, N$ of $Q$ consists of the data
$\varphi=(\varphi_a)_{a\in Q_0}$ of linear maps $\varphi_a: M_a \ra N_a$ for $a\in Q_0$ such that $\varphi_b\circ
M_\alpha= N_\alpha\circ \varphi_a$ for every arrow $\alpha: a\ra b$ in $Q$. The equivalence between the two
categories can be established by two naturally defined quasi-inverse functors $\mathcal{F}: \textrm{mod-}KQ \ra
\repq$ and $\mathcal{G}: \repq \ra \textrm{mod-}KQ$; see \cite[Chapter 5]{Schiffler}. 

We can modify the equivalence between mod-$KQ$ and $\repq$ to account for relations on $Q$. To do so, for each
representation $M$ of $Q$, we set $M_{e_a}=\id_{M_a}$ for all $a\in Q_0$ and associate to each path
$p=\alpha_1\dots\alpha_n$ on $Q$ the map 
\[
    M_p:=  M_{\alpha_n}\circ\dots\circ M_{\alpha_1},
\]
and we say that $M$ \emph{satisfies} a relation $r=\sum c_p p$ if $\sum c_pM_p=0$. For an ideal $\cali$ of $KQ$
generated by a set of relations $\calr$, define a representation of $Q$ to be a \emph{representation of
$(Q,\cali)$} if it satisfies all relations in $\calr$. Finally, let $\repqr$ be the full subcategory of $\repq$
whose objects are the representations of $(Q,\cali)$.  Then it is well-known that $\repqr$ is equivalent to
mod-$KQ/\cali$, the category of finite dimensional right modules of the quotient $KQ/\cali$.

\begin{remark}
    \label{rmk:shorthandNotation}
We introduce two types of shorthand notation to be used for the rest of the paper.  First, for a category
$\mathbf{C}$, we will write $M\in \mathbf{C}$ to mean that $M$ is an object in $\mathbf{C}$. Second, given a
two-sided ideal $I$ in a ring $R$ and an element $r\in R$, we will denote the coset $r+I$ simply by $r$.  
\end{remark}

Familiar notions from mod-$KQ$ have obvious counterparts in $\repq$: The \emph{zero representation} in $\repq$ is
the representation $M$ with $M_a=0$ for all $a\in Q_0$. A \emph{subrepresentation} of a representation $M$ is an
assignment $N=(N_a,N_\alpha)_{a\in Q_0,\alpha\in Q_1}$ such that for every arrow $\alpha: a\ra b$ in $Q$, we have
$N_a\se M_a$, $M_\alpha(N_a)\se N_b$, and $N_\alpha$ equals the restriction of $M_\alpha$ to $N_a$. A
representation is \emph{simple} if it does not contain any proper, nonzero subrepresentation.  The \emph{direct
sum} of two representations $M,N$ is the reprentation $M\oplus N$ where $(M\oplus N)_a=M_a\oplus N_a$ and $(M\oplus
N)_\alpha((m,n))=(M_\alpha(m),N_\alpha(n))$ for every arrow $\alpha: a\ra b$ and every element $(m,n)\in M_a\oplus
N_a$.  Finally, a representation is \emph{semisimple} if it is a direct sum of simple representations, and each of
$\repq$ and $\repqr$ is \emph{semisimple} if all representations in it are semisimple.  These notions agree with
their counterparts in mod-$KQ$ under the equivalences $\mathcal{F}$ and $\mathcal{G}$. For example, a
representation $M\in \repq$ is simple if and only if the module $\mathcal{G}(M)\in \text{mod-}KQ$ is simple, and
$\repqr$ is semisimple if and only if mod-$KQ/\cali$ is semisimple. Indeed, the agreement of the notions can be
attributed to the facts that mod-$KQ$ and $\repq$ are abelian categories, that the definitions in $\repq$ and
mod-$KQ$ are specializations of the corresponding categorical notions, and that $\mathcal{F}, \mathcal{G}$ are
equivalences of abelian categories.

\section{{Quiver Realizations}}
\label{sec:QuiverRealization}
Henceforth, let $K$ be an algebraically closed field of characteristic zero, let $(W,S)$ be a Coxeter system, and
let $G, C$ and $J_C$ be the Coxeter diagram, subregular cell and subregular $J$-ring of $(W,S)$, respectively. Let
$A=A_K:=K\otimes_\Z J_C$. In this section, we associate a quiver $Q$ to $(W,S)$ and then show that $J_C\cong \Z
Q/\cali_u^\Z$ and $A\cong KQ/\cali_f$ for suitable ideals $\cali_u^\Z\se \Z Q$ and $\cali_f\se KQ$. By
\autoref{sec:QuiverReps}, the latter isomorphism will allow us to study the category $\mathrm{rep}_K A$ via the
equivalent category $\mathrm{rep}_K(Q,\cali_f)$.

\subsection{Statement of Results}
\label{sec:IsoStatements}
Let $Q=(Q_0,Q_1)$ be the quiver with $Q_0=S$ and $Q_1=\{(a,b):a,b\in S, m({a,b})\ge 3\}$. Each edge $a-b$ in the
Coxeter diagram $G$ gives rise to a pair of arrows $a\ra b$ and $b\ra a$ in $Q$, and all arrows of $Q$ arise this
way.  For an arrow $\alpha: a\ra b$ in $Q$, we call the arrow $b\ra a$ arising from the same edge in $G$ the
\emph{dual arrow of $\alpha$} and denote it by $\bar \alpha$; we define the \emph{weight} of $\alpha$ to be
$m(a,b)$ and denote it by $m_\alpha$. We call the quiver $Q$ the \emph{double quiver of $(W,S)$} or the
\emph{double quiver of $G$}.

The ideal $\cali_u^\Z$ for which $J_C\cong \Z Q/\cali_u^\Z$ is generated by a set of (uniform) relations obtained
via \emph{arrow evaluations} of polynomials from suitable polynomial families.  We first define arrow evaluations:

\begin{definition}
    For each arrow $\alpha$ in $Q$, let $\mathrm{Eval}_\alpha: K[x]
    \ra KQ$ be the unique $K$-linear map such that
    $\mathrm{Eval}_\alpha(1)=e_a$ where $a=\source(\alpha)$ and
    \[\mathrm{Eval}_\alpha(x^n)=\alpha\bar\alpha\alpha\dots,\]
    the product with $n$ factors that start with $\alpha$ and alternate in
    $\alpha$ and $\bar\alpha$, for all $n>0$. For each polynomial $f\in
    K[x]$, we write 
    \[
        f(\alpha,\bar\alpha):=\mathrm{Eval}_\alpha(f)
    \]
    and call $f(\alpha,\bar\alpha)$ the \emph{$\alpha$-evaluation of $f$}.
\label{def:ArrowEval} 
\end{definition}

By a ``polynomial family'' we mean a countable collection $\{f_n:n\in\Z_{\ge 2}\}$ of polynomials in $K[x]$.  Note
that for $f(\alpha,\bar\alpha)$ to yield a uniform relation on $Q$, the polynomial $f$ needs to be either even or
odd, therefore we will consider only polynomial families $\{f_n\}$ where each $f_n$ is either an even or an odd
polynomial.  To describe further conditions we would like to impose on $\{f_n\}$, we need more notation:

\begin{definition}
    \label{def:fTilde}
    For each even polynomial $f=\sum c_i x^{2i} \in K[x]$, let 
    \[
        \tilde f=\sum {c_i} x^i;
    \]
    for each odd polynomial $f=\sum c_i x^{2i+1}\in K[x]$, let 
    \[
       \tilde f=\sum {c_i} x^i.
    \]
\end{definition}

Note that when $f$ is an even or odd polynomial of degree $n$, the polynomial $\tilde
f$ has degree $\lfloor{n/2}\rfloor$ where $\lfloor{-}\rfloor$ denotes the floor
function; moreover, we have
\begin{equation}
    f(\alpha,\bar\alpha)=
    \begin{cases}
       \tilde f(\alpha\bar\alpha)  & \text{if $f$ is even};\\
       \tilde f(\alpha\bar\alpha)\cdot \alpha=\alpha\cdot \tilde f(\bar\alpha\alpha) & \text{if
       $f$ is odd},
        \end{cases}
    \label{eq:TildeEval}
\end{equation}
\noindent where we evaluate a constant term $c$ in $\tilde f$ to $ce_a$ for $a=\source(\alpha)$. For example, if
$f=x^3-2x$ then $\tilde f=x-2$ and $f(\alpha,\bar\alpha)= \alpha\bar\alpha\alpha-2\alpha$, and if $f=x^4-1$ then
$\tilde f=x^2-1$ and $f(\alpha,\bar\alpha)=\alpha\bar\alpha\alpha\bar\alpha-e_a$ where $a=\source(\alpha)$. 

We are ready to define the polynomial families we need.

\begin{definition} 
\label{def:DistPoly} 
A \emph{uniform} family of polynomials (over $K$) is a set \[
\{f_n\in K[x]:n\in \Z_{\ge 2}\}\]
such that for all $n\in \Z_{\ge 2}$, we have
\begin{enumerate} 
\item $f_n$ has degree $n$, is even when $n$
    is even, and is odd when $n$ is odd. 
\item zero is
not a root of $\tilde f_n$, and no root of $\tilde f_n$ is repeated.  
\end{enumerate}
\end{definition}

Given a uniform  polynomial family, we assign one relation to each arrow and define an ideal $\cali_f$ of $KQ$ as
follows:

\begin{definition}
    \label{def:ArrowRelations} 
    Let $\{f_n: n\ge 2\}$ be a uniform  family of polynomials.
    \begin{enumerate}
        \item For each arrow $\alpha$ in $Q$, we set $m=m_\alpha$ and define
            \begin{equation}
        r_f(\alpha)=\begin{cases}
        0 & \text{if $m=\infty$};\\
        f_{m-1}(\alpha, \bar\alpha) & \text{if $m<\infty$}.
        \end{cases}
        \label{eq:r_fDef}
    \end{equation}
\item
We define the \emph{evaluation ideal of $\{f_n\}$} to be the two-sided ideal 
\[
    \cali_f :=\ip{r_f(\alpha):\alpha\in Q_1}
\]
of $KQ$ generated by the relations of the form $r_f(\alpha)$.    More generally, if $f_n\in R[x]$ for all $n\ge 2$
for some subring $R$ of $K$, we define $\cali_f^R$ to be the two-sided ideal of $RQ$ given by 
\[
    \cali_f^R:= \ip{r_f(\alpha): \alpha\in Q_1}\se RQ.  
\] 
    \end{enumerate}
   \end{definition}

\begin{example}
    \label{eg:Cheb}
    Suppose $K=\C$, and 
    consider the polynomials $u_n$ for $n\ge 0$ where 
    \begin{equation}
        \label{eq:Cheb}
        u_0=1, \quad u_1=x, \quad\text{and\; }  u_n=xu_{n-1}-u_{n-2}\;
        \text{\;for all
        $n\ge 2$}.
    \end{equation}
    These polynomials are normalizations of the \emph{Chebyshev polynomials of
    the second kind}. It is easy to see by induction that for each $n\ge 2$,
    the polynomial $u_n$ has degree $n$, is even when $n$ is even, and is odd
    when $n$ is odd.  Moreover, it is known that $u_n$ has $n$ distinct nonzero real roots $z_1,\dots,z_n$ where 
    \[
        z_i=2\cos\left(\dfrac{i\pi}{n+1}\right)
    \]
    for each $i$. The definition of the polynomial $\tilde u_n$ implies that
    $\tilde u_n$ has $\lfloor \frac{n}{2}\rfloor$ distinct nonzero roots,
    namely, the numbers $z_i^2$ where $1\le i\le\lfloor\frac{n}{2}\rfloor$. It
    follows that $\{u_n:n\ge 2\}$ forms a uniform  family of polynomials over $\C$.
    Note that $u_n\in \Z[x]$ for all $n\ge 2$, so $\cali_u^{\Z}$ makes sense as
    an ideal of $\Z Q$.  
\end{example}

We state our first two results below.

\begin{thm}
    \label{thm:JcIso}
Let $\{u_n:n\in \Z_{\ge 2}\}$ be as in Example \ref{eg:Cheb}. Then $J_C\cong \Z Q/\cali_{u}^\Z$ as unital rings.  
\end{thm}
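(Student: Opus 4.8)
The plan is to construct explicit mutually inverse ring homomorphisms $\Phi\colon \Z Q/\cali_u^\Z \to J_C$ and $\Psi\colon J_C \to \Z Q/\cali_u^\Z$, using the combinatorial description of products in $J_C$ from Propositions~\ref{prop:JcProducts} and~\ref{prop:DihedralJcProducts}. First I would set up the map on generators: the stationary path $e_a$ should map to $t_a$ for each $a\in S$, and an arrow $\alpha\colon a\to b$ should map to $t_{ab}$ (recall $m(a,b)\ge 3$ guarantees $ab\in C$). Since $\Z Q$ is the free algebra on the $e_a$ and the arrows modulo only the trivial $e_ae_b=\delta_{ab}e_a$ and $e_a\alpha=\alpha=\alpha e_b$ relations, this assignment extends uniquely to a ring homomorphism $\Z Q\to J_C$ once one checks it respects those trivial relations --- which is immediate from Proposition~\ref{prop:JcProducts}(a)--(b), since $t_at_b=\delta_{ab}t_a$ and $t_at_{ab}=t_{ab}=t_{ab}t_b$. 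The key point here is that under this map a path $\alpha_1\alpha_2\cdots\alpha_n$ with $\alpha_i\colon a_i\to a_{i+1}$ and $a_i\ne a_{i+2}$ (a ``non-backtracking'' path) maps to $t_{a_1a_2\cdots a_{n+1}}$ by repeated application of part~(b), while a backtracking path lands inside a dihedral subalgebra and is governed by part~(c).

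Next I would verify that the evaluation relations $r_u(\alpha)=u_{m-1}(\alpha,\bar\alpha)$ (for $m=m_\alpha<\infty$) lie in the kernel, so that the map descends to $\Phi\colon \Z Q/\cali_u^\Z\to J_C$. This is the computation where Proposition~\ref{prop:DihedralJcProducts} does the real work: using \eqref{eq:TildeEval}, $u_{m-1}(\alpha,\bar\alpha)$ equals $\tilde u_{m-1}(\alpha\bar\alpha)$ or $\tilde u_{m-1}(\alpha\bar\alpha)\cdot\alpha$ according to parity, and $\alpha\bar\alpha$ maps to $t_{ab}t_{ba}$, which acts on the span of the dihedral elements $t_{a,i}$, $t_{b,i}$ by the Chebyshev recursion \eqref{eq:tRecursion}. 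One then shows by induction on $i$ that $(\alpha\bar\alpha)^k$ applied appropriately produces $u_{2k}$ (resp.\ $u_{2k+1}$) evaluated at the ``shift operator'', and the truncation at $i=m-1$ in \eqref{eq:tRecursion} is exactly what forces $u_{m-1}$ to vanish --- this is the classical fact that the Chebyshev polynomial $u_{m-1}$ annihilates the $(m-1)$-dimensional representation of the recursion. For $m=\infty$ the relation is $0$ and there is nothing to check. So $\Phi$ is well-defined.

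To build the inverse, I would send $t_w\in J_C$, for $w\in C$ with reduced word $w=a_1a_2\cdots a_\ell$, to the class of the path $\alpha_1\cdots\alpha_{\ell-1}$ (where $\alpha_i\colon a_i\to a_{i+1}$) when $\ell\ge 2$, and to $e_{a_1}$ when $\ell=1$; note each consecutive pair $a_ia_{i+1}$ has $m(a_i,a_{i+1})\ge 3$ by Remark~\ref{rmk:RigidityCriterion}(a), so $\alpha_i$ is a genuine arrow. The content is checking that $\Psi$ is a ring homomorphism, i.e.\ that it respects the product rules (a)--(c): rules (a) and (b) translate to path concatenation being zero or a longer path, which matches multiplication in $\Z Q$ after the $*$-operation is identified with ``glue and delete one vertex''; rule (c), the dihedral case, is precisely the content modulo $\cali_u^\Z$ by the Chebyshev computation above. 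Then $\Phi\circ\Psi=\id$ and $\Psi\circ\Phi=\id$ follow by checking on generators (for $\Psi\circ\Phi$ one uses that every path is, modulo $\cali_u^\Z$, a $\Z$-combination of non-backtracking paths --- again via \eqref{eq:tRecursion}/Chebyshev, which lets one rewrite any backtracking as a combination of shorter straight paths).

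The main obstacle, and the step deserving the most care, is the dihedral computation: showing simultaneously that (i) $r_u(\alpha)$ maps to $0$ in $J_C$, (ii) the product rule (c) is reproduced exactly in $\Z Q/\cali_u^\Z$, and (iii) modulo $\cali_u^\Z$ every element of $e_a(\Z Q)e_b$ is a $\Z$-span of the $\lfloor$path-classes corresponding to $\{a,b\}$-elements$\rfloor$ --- i.e.\ that the quotient of the dihedral sub-path-algebra is a free $\Z$-module of the correct rank $m-1$ (or infinite rank when $m=\infty$), matching $\dim J_{C\cap\langle a,b\rangle}$. Concretely this reduces to a statement about the $\Z[x]$-module $\Z[x]/(\tilde u_{\lfloor (m-1)/2\rfloor}\text{-type relations})$ and the linear algebra of the tridiagonal ``Chebyshev'' operator; the distinct-nonzero-roots property of $\tilde u_n$ recorded in Example~\ref{eg:Cheb} is what ensures no collapse or extra torsion. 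Once this local (two-generator) picture is nailed down, the global statement follows formally because, as emphasised after Proposition~\ref{prop:DihedralJcProducts}, all nontrivial multiplication in $J_C$ happens inside two-generator subgroups, and correspondingly all relations in $\cali_u^\Z$ are supported on two-vertex sub-quivers.
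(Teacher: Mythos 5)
Your construction of the forward map agrees with the paper's: the assignment $e_a\mapsto t_a$, $\alpha\mapsto t_{ab}$ extends to a homomorphism $\varphi\colon \Z Q\to J_C$, and the induction matching the recursion $u_i=xu_{i-1}-u_{i-2}$ against Equation \eqref{eq:tRecursion} shows $\varphi(u_{m-1}(\alpha,\bar\alpha))=0$, so $\varphi$ descends to the quotient. Up to that point you are on the paper's track.

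The third paragraph, however, contains a genuine error. The map $\Psi\colon t_w\mapsto p_w$ is \emph{not} a ring homomorphism and is \emph{not} inverse to $\Phi$. Concretely, take $m=m(a,b)\ge 5$ and $w=aba$. Then $\Phi(\Psi(t_{aba}))=\Phi(\alpha\beta)=t_{ab}t_{ba}$, and by Proposition \ref{prop:DihedralJcProducts} (with $i=2<m-1$) this equals $t_a+t_{aba}$, not $t_{aba}$. Equivalently, $\Psi(t_{ab}t_{ba})=\Psi(t_a)+\Psi(t_{aba})=e_a+\alpha\beta$ while $\Psi(t_{ab})\Psi(t_{ba})=\alpha\beta$, so multiplicativity fails already in case (c). The correct relationship is only \emph{triangular}: $\varphi(p_w)\in t_w+J_C^{(l(w)-2)}$ (Proposition \ref{prop:lower}), with the lower-order terms genuinely nonzero in general. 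The actual inverse would have to send $t_{a,i}$ to the Chebyshev evaluation $u_{i-1}(\alpha,\bar\alpha)$ rather than to the bare path $\alpha\bar\alpha\cdots$ (this is what Equation \eqref{eq:u1} says $\bar\varphi$ inverts), and proving that such a $\Psi$ is well defined and multiplicative would require the explicit truncated Clebsch--Gordan rule that the paper deliberately avoids. The paper instead exploits the triangularity directly: surjectivity of $\bar\varphi$ by induction on $l(w)$ using $\varphi(p_w)\in t_w+J_C^{(l(w)-2)}$, and injectivity by writing any element of $\Z Q/\cali_u^\Z$ as a combination of unbraided paths and observing that the images of the top-length paths contribute $\sum c_it_{w_i}$ modulo lower filtration pieces. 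Your argument can be repaired by replacing the explicit-inverse step with this filtration argument; as written, $\Phi\circ\Psi=\id$ is false.
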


\begin{thm}
    \label{thm:IsoQuotients}
    Let $K$ be an algebraically closed field of characteristic zero and let $\{f_n\}, \{g_n\}$ be two
    uniform  families of polynomials over $K$. Then $KQ/\cali_{f}\cong
    KQ/\cali_{g}$ as $K$-algebras.
\end{thm}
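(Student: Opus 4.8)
The plan is to reduce the global isomorphism $KQ/\cali_f \cong KQ/\cali_g$ to a ``local'' statement about a single pair of dual arrows, and then handle the local statement with a change of basis on the relevant vector spaces at the two vertices. The key observation is that each relation $r_f(\alpha) = f_{m_\alpha-1}(\alpha,\bar\alpha)$ only involves the two arrows $\alpha,\bar\alpha$ of a single edge; moreover, by Equation~\eqref{eq:TildeEval}, the relation can be written in terms of the element $\alpha\bar\alpha$ (or $\bar\alpha\alpha$) via the polynomial $\tilde f_{m-1}$. Since $\{f_n\}$ is uniform, $\tilde f_{m-1}$ and $\tilde g_{m-1}$ are polynomials of the same degree $\lfloor (m-1)/2\rfloor$ with the same set of multiplicities of roots (all simple, none zero), hence they are related by a single affine substitution $x \mapsto \lambda x$ composed with rescaling --- actually we must be slightly more careful: two monic-up-to-scalar polynomials with distinct nonzero roots need not be related by a linear substitution unless their root sets are scalar multiples of each other. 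So the genuinely correct approach is not a substitution in the polynomial variable but a change of basis in the representations.

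Concretely, I would argue at the level of module categories via \autoref{sec:QuiverReps}: it suffices to produce an equivalence $\repqf \simeq \repqg$. Given a representation $M \in \repqf$, for each edge $\{a,b\}$ with weight $m<\infty$ and arrows $\alpha: a\to b$, $\bar\alpha: b\to a$, the composite $M_\alpha M_{\bar\alpha}$ is an endomorphism of $M_a$ annihilated by $\tilde f_{m-1}$; since $\tilde f_{m-1}$ has distinct nonzero roots, $M_a$ decomposes as a direct sum of eigenspaces $M_a = \bigoplus_i M_a^{(i)}$ for the eigenvalues among the roots $\{\zeta_i\}$ of $\tilde f_{m-1}$, and similarly $M_b$ decomposes using $M_{\bar\alpha}M_\alpha$, which has the \emph{same} nonzero eigenvalues with the same multiplicities (a standard fact: $XY$ and $YX$ have equal nonzero spectra with multiplicities, and here nonzero is automatic since $0$ is not a root). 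Let $\{\eta_i\}$ be the roots of $\tilde g_{m-1}$, indexed compatibly. I would define a new representation $M'$ with the same underlying spaces but with $M'_\alpha$, $M'_{\bar\alpha}$ obtained from $M_\alpha$, $M_{\bar\alpha}$ by inserting scalars $\sqrt{\eta_i/\zeta_i}$ on the eigenspaces (choosing square roots once and for all, using that $K$ is algebraically closed), so that $M'_\alpha M'_{\bar\alpha}$ acts by $\eta_i$ on $M_a^{(i)}$ and hence is annihilated by $\tilde g_{m-1}$. One checks this is functorial, compatible across different edges (since the rescaling on each edge is independent), and invertible, giving the desired equivalence; the isomorphism of algebras $KQ/\cali_f \cong KQ/\cali_g$ then follows because equivalences of the form ``rescale the structure maps by fixed scalars depending only on eigenvalues'' are induced by algebra isomorphisms --- alternatively one writes down the algebra map directly, sending $\alpha \mapsto$ (suitable scaled combination of $\alpha$ supported on generalized eigenspaces), though phrasing it module-theoretically is cleaner.

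The main obstacle, and the step requiring the most care, is making the eigenspace rescaling into an honest \emph{algebra} isomorphism $KQ/\cali_f \to KQ/\cali_g$ rather than merely a category equivalence, and verifying that the scalars are globally consistent. The subtlety is that the ``eigenvalue'' decomposition of $M_a$ depends on $M$, so one cannot naively scale the arrow $\alpha \in KQ$ by a single constant; instead one must exploit that in $KQ/\cali_f$ the element $\alpha\bar\alpha$ (resp.\ $\bar\alpha\alpha$) satisfies $\tilde f_{m-1}(\alpha\bar\alpha) = 0$, so $e_a KQ/\cali_f\, e_a$ contains orthogonal idempotents $\epsilon_i$ projecting onto the $\zeta_i$-eigenspace, given by Lagrange-interpolation polynomials in $\alpha\bar\alpha$; one then defines the isomorphism by $\alpha \mapsto \sum_i c_i\, \epsilon_i \alpha$ with $c_i^2 = \eta_i/\zeta_i$ and checks directly that this is a well-defined, invertible homomorphism matching the relations of $\cali_g$. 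I expect the bookkeeping of these idempotents (especially keeping track of whether we use $\alpha\bar\alpha$ at the source or $\bar\alpha\alpha$ at the target, and the odd-degree case where $\tilde f_{m-1}$ governs $\alpha\bar\alpha$ but $\alpha$ itself is not captured by $\tilde f$) to be where essentially all the real work lies, while the existence of the eigenvalue-matching bijection and the square roots is immediate from uniformity and algebraic closedness.
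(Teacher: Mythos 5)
Your proposal is correct in outline, and for the key local step it takes a genuinely different route from the paper. Both arguments reduce to the dihedral case (a single pair of dual arrows) and then glue the local isomorphisms over the edges of $G$ using the presentation of $KQ$ by stationary paths and arrows; that part is the same. For the local step, however, the paper first classifies the simple representations of $(Q,\cali_f)$, deduces the Artin--Wedderburn decomposition of $KQ/\cali_f$ (a product of copies of $K$ and $M_{2\times 2}(K)$ depending on the parity of $m$), and then produces an isomorphism $KQ/\cali_f\to KQ/\cali_g$ fixing $e_a,e_b$ by an abstract matching of the conjugacy classes of the images of $e_a,e_b$ in the semisimple algebra. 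You instead build the isomorphism explicitly: since $\tilde g_{m-1}(\alpha\bar\alpha)=0$ (resp.\ $\alpha\bar\alpha\,\tilde g_{m-1}(\alpha\bar\alpha)=0$ when $m$ is even) in $KQ/\cali_g$ and the relevant polynomial has distinct roots, Lagrange interpolation yields orthogonal idempotents $\epsilon_i$ in $e_a(KQ/\cali_g)e_a$, and sending $\alpha\mapsto\sum_i c_i\epsilon_i\alpha$ with $c_i\cdot(\text{scalar for }\bar\alpha)=\zeta_i/\eta_i$ matches the relations; in the even case the eigenvalue-$0$ idempotent satisfies $\epsilon_0\alpha=0$ in the quotient, so it causes no harm, and one does not even need square roots (one may put all the rescaling on $\alpha$ and leave $\bar\alpha$ fixed). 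Your construction automatically fixes $e_a$ and $e_b$, which is exactly what the gluing requires, and it bypasses the dimension-counting and conjugacy analysis of the paper's Corollary \ref{coro:DihedralMatch}; the paper's route has the advantage that the Artin--Wedderburn decomposition (Theorem \ref{thm:ArtinWedderburn}) is a reusable result needed elsewhere. One caution you already half-acknowledge: the first, purely module-categorical version of your argument (rescaling structure maps on eigenspaces representation by representation) would a priori only give an equivalence of categories, hence Morita equivalence rather than isomorphism; it is the Lagrange-idempotent construction inside the algebra that upgrades this to an honest isomorphism, and that is indeed where the work lies.
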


\begin{remark}
We can now explain why it suffices to deal with only irreducible Coxeter systems in this paper. Recall that if
$(W,S)$ is reducible, then the connected components of its Coxeter diagram are the diagrams of Coxeter systems
$(W_i,S_i)$ for $1\le i\le k$ for some $k\ge 2$, and we have $S=\sqcup_i S_i, W=\Pi_{i}W_i$, where the symbols
$\sqcup$ and $\Pi$ denote disjoint union and direct product, respectively. Now let $C(i), Q(i)$ be the subregular
cell and the double quiver of $(W_i, S_i)$ for each $i$.  Then $C= \sqcup_i C(i)$ by definition and $J_C=\Pi_i
J_{C(i)}$ by Part (a) of Proposition \ref{prop:JcProducts}. On the other hand, for any uniform polynomial family
$\{f_n\}$ over $K$ where $f_n\in \Z[x]$ for all $n$ (such as $\{u_n\}$) it is easy to see that $\Z
Q/\cali_f^\Z=\Pi_i \Z Q(i)/\cali_f^\Z(i)$ and $KQ/\cali_f\cong \Pi_i KQ(i)/\cali_f(i)$, where for each $i$ the
ideals $\cali_f^\Z(i)$ and $\cali_f(i)$ are the evaluation ideals of $\{f_n\}$ in $\Z Q(i)$ and $K Q(i)$,
respectively. It follows that we can deduce Theorem \ref{thm:JcIso} and Theorem \ref{thm:IsoQuotients} for
reducible Coxeter systems from the irreducible cases by taking suitable direct products.
\label{rmk:IrreducibleAssumption}
\end{remark}

\subsection{Proof of Theorem \ref{thm:JcIso}}
\label{sec:JcIsoProof}

In this section we prove Theorem \ref{thm:JcIso} by constructing an explicit isomorphism $\bar\varphi: \Z
Q/\cali_u^\Z \ra J_c$. To connect the two sides of the isomorphism, first observe that given any element
$w=s_1s_2\dots s_k\in C$, we must have $m(s_i,s_{i+1})\ge 3$ for all $1\le i\le k-1$: otherwise we can exchange
$s_i$ and $s_{i+1}$ to obtain another reduced word of $w$, contradicting the fact that $w$ is rigid. It follows
that the quiver $Q$ contains an arrow $\alpha_i: s_i \ra s_{i+1}$ for all $1\le i\le k$ as well as the path 
\[
    p_w:=\alpha_1\alpha_2\cdots \alpha_{k-1}.
\]
Recall the notation $\calp$ for the set of all paths on $Q$, and consider the map $\iota: C\ra \calp$ which sends
$w$ to $p_w$ for all $w\in C$. For each arrow $\alpha: a\ra b$ in $Q$ with $m_\alpha<\infty$, let $
p_\alpha:=\alpha\bar\alpha\alpha\dots $ be the path of length $m_\alpha-1$ obtained by concatenating $\alpha$ and
$\bar\alpha$ repeatedly. Define a path $p\in \calp$ to be \emph{unbraided} if it does not contain $p_\alpha$ as a
subpath, i.e., if we cannot write $p=p_1p_\alpha p_2$ for some paths $p_1,p_2\in \calp$, for all $\alpha\in Q_1$
with $m_\alpha<\infty$. Let $\mathrm{Unbr}(Q)$ be the set of unbraided paths in $\calp$. Then by Remark
\ref{rmk:RigidityCriterion}.(a), the image of $\iota$ is exactly $\mathrm{Unbr}(Q)$. Since $\iota$ is clearly
injective, it gives a bijection from $C$ to $\unbr(Q)$. We will henceforth use $\iota$ exclusively to denote this
bijection. The definitions and notation of this paragraph are inspired by those from \cite[Chapter 12]{Bonnafe},
where the bijection $\pi: \unbr(Q)\ra C$ is essentially the inverse of $\iota$.

Having connected $C$ to $\Z Q$, let us next consider the effect of quotienting $\Z Q$ by the ideal $\cali_u^\Z$.
Let $\alpha\in Q_1$ and let $m=m_{\alpha}$.  Since the polynomial $u_{m-1}$ has degree $(m-1)$, the relation
$r_f(\alpha)=r_{m-1}(\alpha,\bar\alpha)\in \cali_u^\Z$ must be a linear combination of the alternating path
$q:=\alpha\bar\alpha\alpha\cdots$ of length $m-1$ and strictly shorter, unbraided paths in $\Z Q$. Since $\alpha$
is arbitrary, it follows that  modulo $\cali_u^\Z$ we can rewrite every path as a linear combination of unbraided
paths.  In other words, every element in the quotient $\Z Q/\cali_u^\Z$ can be represented in the form $\sum_{p\in
\unbr(Q)}c_pp$ where the coefficients $c_p\in \Z$ are zero for all but finitely many paths. 

The final tool we need concerns a natural filtration of $J_C$.  For each $i\in \Z_{\ge 0}$, let $ C^{(i)}=\{w\in C:
l(w)\le i+1\} $ and let $J_C^{(i)}=\oplus_{w\in C^{(i)}} \Z t_w $. As the example at the end of
\autoref{sec:SubregularJ} illustrates, Propositions \ref{prop:JcProducts} and \ref{prop:DihedralJcProducts} imply
that given elements $x,y\in C$ with length $l(x)=p+1$ and $l(y)=q+1$ for some $p,q\ge 0$, the product $t_{x}t_y$ is
always a linear combination of terms of the form $t_{z}$ where $l(z)\le p+q+1$. It follows that the filtration 
\begin{equation}
    \label{eq:jcfil}
    0\se \jcfil{0} \se \jcfil{1}\se \dots.
\end{equation}
\noindent
equips $J_C$ with the structure of a filtered algebra. The same propositions also imply the following result. 

\begin{prop}
    \label{prop:lower}
    Let $w=s_1s_2\dots s_k\in C$. Then in $J_C$, we have
    \[
        t_{s_1s_2}t_{s_2s_3}\dots t_{s_{k-1}s_k}\in t_w + J_C^{(k-2)}
    \]
    where $t_w+J_C^{(k-2)}=\{t_w+ z: z\in J_C^{(k-2)}\}$. In other words, the
    product is the sum of $t_w$ and a linear combination of terms $t_y$ for
    which $l(y)<k$. 
\end{prop}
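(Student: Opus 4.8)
The plan is to induct on $k=l(w)$. The base case $k=1$ is vacuous (the product is empty, interpreted as $\sum_a t_a$, and $w=s_1$ is itself in $C^{(0)}$), and the case $k=2$ is the single term $t_{s_1s_2}=t_w$. For the inductive step, I would write the product as $(t_{s_1s_2}\cdots t_{s_{k-2}s_{k-1}})\cdot t_{s_{k-1}s_k}$ and apply the inductive hypothesis to the parenthesized factor, which by hypothesis lies in $t_{s_1\cdots s_{k-1}}+J_C^{(k-3)}$. So the product equals $t_{s_1\cdots s_{k-1}}t_{s_{k-1}s_k}$ plus $z\cdot t_{s_{k-1}s_k}$ for some $z\in J_C^{(k-3)}$. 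Since the filtration \eqref{eq:jcfil} is multiplicative (by the remark preceding Proposition \ref{prop:lower}, a product $t_xt_y$ with $l(x)=p+1$, $l(y)=q+1$ lies in $J_C^{(p+q)}$), the second summand lies in $J_C^{(k-3)}\cdot J_C^{(1)}\subseteq J_C^{(k-2)}$, which is harmless. So it remains to analyze $t_{x}t_{s_{k-1}s_k}$ where $x=s_1\cdots s_{k-1}$.

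For that analysis I would apply Proposition \ref{prop:JcProducts} to $x=s_1\cdots s_{k-1}$ (so $a_1=s_{k-1}$, $a_2=s_{k-2}$) and $y=s_{k-1}s_k$ (so $b_1=s_{k-1}$, $b_2=s_k$). Since $w\in C$ we have $s_{k-2}\neq s_k$: indeed if $s_{k-2}=s_k$, then $w=s_1\cdots s_{k-2}s_{k-1}s_{k-2}$ contains the contiguous subword $s_{k-2}s_{k-1}s_{k-2}$, and because $m(s_{k-2},s_{k-1})\ge 3$ this either is an $\{s_{k-2},s_{k-1}\}$-braid (if $m=3$) or, if $m>3$, can be rewritten using a braid relation after extending — more carefully, rigidity forces $m(s_{k-2},s_{k-1})$ to be large enough to forbid this only when $m=3$; so I should instead split into two cases. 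If $s_{k-2}\neq s_k$ and $x,y$ are not both $\{a,b\}$-elements for a common pair, then by Proposition \ref{prop:JcProducts}(b) the product is exactly $t_{x*y}=t_{s_1\cdots s_{k-1}s_k}=t_w$, with no lower-order terms, and we are done. If $x$ and $y$ are both $\{a,b\}$-elements (this can happen only when $s_1,\dots,s_k\in\{a,b\}$, i.e.\ $w$ is an $\{a,b\}$-element of length $k$, so necessarily $k<m(a,b)$), then Proposition \ref{prop:DihedralJcProducts} applies: with $i=k-1$ we get $t_{ab}t_{b,k-1}=t_{a,k-2}+t_{a,k}$ when $k-1<m-1$, and $t_{a,k-2}$ alone when $k-1=m-1$. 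The leading term $t_{a,k}$ is exactly $t_w$ (matching the unique reduced word $s_1\cdots s_k$, which is $aba\cdots$ of length $k$), and $t_{a,k-2}$ has length $k-2<k$, hence lies in $J_C^{(k-3)}\subseteq J_C^{(k-2)}$; in the case $k-1=m-1$ the $t_w$ term is absent, but then $w=w_{a,m-1}\notin C$ contradicts $w\in C$ — wait, $w_{a,m-1}$ does lie in $C$ since $m-1<m$. So in that edge case $k=m-1$ one must double-check that $t_x t_{s_{k-1}s_k}$ still produces $t_w$: here $x=w_{a,k-1}$ has length $k-1$ and $y=bc\cdots$ — no, $y=s_{k-1}s_k$ is again an $\{a,b\}$-word, so Proposition \ref{prop:DihedralJcProducts} with the stated $i$ is not directly the right instance; I would instead recompute $t_{w_{a,k-1}}\,t_{s_{k-1}s_k}$ directly from Proposition \ref{prop:JcProducts}(c), noting $k<m$ so the truncated Clebsch–Gordan rule does yield $t_w$ plus shorter terms.

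The main obstacle is bookkeeping in the dihedral case: correctly identifying which instance of the recursion \eqref{eq:tRecursion} governs $t_{x}\,t_{s_{k-1}s_k}$ when $x$ itself has length $>2$, and confirming that the top-length term is always present and equals $t_w$ as long as $k<m(a,b)$ (which is forced by $w\in C$). I expect this to follow cleanly from the fact (deducible from Proposition \ref{prop:DihedralJcProducts} by induction, as noted after Proposition \ref{prop:JcProducts}) that within a dihedral subgroup the product $t_{u}t_{v}$ of two same-ending words of lengths $p+1,q+1$ equals $t_{u*v}$ plus strictly shorter $\{a,b\}$-terms whenever $p+q+1<m(a,b)$. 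Once that is in hand, the leading term is $t_{u*v}=t_w$ and the rest lies in $J_C^{(k-2)}$, completing the induction. The non-dihedral case needs only Proposition \ref{prop:JcProducts}(b) and produces no error term at all.
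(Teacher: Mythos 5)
Your overall strategy (induct on $k$, peel off the last factor, absorb the error term using the multiplicativity of the filtration \eqref{eq:jcfil}) is sound and genuinely different from the paper's, which instead groups the factors into maximal dihedral blocks, applies the dihedral special case to each block, and reassembles the leading terms via Proposition \ref{prop:JcProducts}(b). However, your case analysis of the key product $t_xt_{s_{k-1}s_k}$ with $x=s_1\cdots s_{k-1}$ has a genuine hole: you treat only (i) $s_{k-2}\neq s_k$, where part (b) applies, and (ii) $x,y$ both $\{a,b\}$-elements, where part (c) applies. The remaining case --- $s_{k-2}=s_k$ but $x$ not an $\{s_{k-1},s_k\}$-element --- is covered by neither part of Proposition \ref{prop:JcProducts} as stated, and it does occur: take $w=cabab$ with $m(a,b)\ge 6$ and $m(a,c)\ge 3$, so that $x=caba$, $y=ab$, $a_2=b_2=b$, and $x\notin\langle a,b\rangle$. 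Handling this case forces you to first split $t_x=t_{x'}t_{x''}$ along the maximal dihedral suffix $x''$ of $x$ (using part (b) in reverse) and then analyze the dihedral product $t_{x''}t_y$ --- i.e., to reintroduce the block decomposition on which the paper's argument is built.

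Second, your dihedral case is not closed either: Proposition \ref{prop:DihedralJcProducts} computes $t_{ab}t_{b,i}$ with the length-$2$ factor on the \emph{left}, whereas your peeling produces $t_{a,k-1}\,t_{s_{k-1}s_k}$ with it on the right, and you end by deferring to the omitted truncated Clebsch--Gordan rule or to an unproven auxiliary claim about products of same-ending dihedral words. The clean fix is to run the induction inside each dihedral block by peeling off the \emph{first} factor, so that \eqref{eq:tRecursion} applies verbatim with $i=k-1$, and $i<m-1$ is automatic because rigidity of $w_{a,k}$ forces $k<m$; this is exactly the ``repeated application of Proposition \ref{prop:DihedralJcProducts}'' that the paper's sketch invokes. (A minor point: for $k=1$ the product should be read as $t_{s_1}=\varphi(e_{s_1})$, not as the unit $\sum_{a\in S}t_a$.)
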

\noindent
This proposition will be useful for proving that the map $\bar\varphi: \Z Q/\cali_u^\Z \ra J_C$ is an isomorphism,
because we will examine several outputs of the map $\bar\varphi$ which have the form $t_{s_1s_2}t_{s_2s_3}\dots
t_{s_{k-1}s_k}$.  Rather than giving a formal proof of it, however, let us only sketch the main ideas needed with
an example. The proposition follows from repeated application of Proposition \ref{prop:DihedralJcProducts} in the
special case that $w$ is an $\{a,b\}$-element for some $a,b\in S$. The general case then reduces to the special
case in the way illustrated by the following example: suppose $w=abacacb\in C$ for some Coxeter system and let $T=
t_{ab}t_{ba}t_{ac}t_{ca}t_{ac}t_{cb}$.  Then $k=7$, and by the special case we have
\[
  T=  (t_{ab}t_{ba})(t_{ac}t_{ca}t_{ac})(t_{cb})\in \left(t_{aba}+
    J_C^{(1)}\right)\left(t_{acac}+J_C^{(2)}\right)\left(t_{cb}+J_C^{(0)}\right)
\] 
where the factors in parentheses correspond to the longest ``dihedral'' subwords $aba, acac, cb$ of $w$. The
filtration \eqref{eq:jcfil} implies that all terms $t_w$ with $l(w)=k$ which appear in $T$ must come from the
product $t_{aba}t_{acac}t_{cb}$, where each factor is the ``highest degree part'' in a pair of parentheses. This
product is nothing but $t_w$ by Proposition \ref{prop:JcProducts}.(b), therefore 
\[
   T \in t_{aba}t_{acac}t_{cb}+J_C^{(5)}=t_w +
   J_C^{(k-2)},
\]
as desired.

We are ready to prove Theorem \ref{thm:JcIso}.  Roughly speaking, the isomorphism holds for two main reasons:
first, as we mentioned in \autoref{sec:SubregularJ}, all interesting multiplications in $J_C$ happen ``locally''
along individual edges of the Coxeter diagram, just as the relations generating $\cali_u^\R$ are defined in the
same fashion; second, via arrow evaluations, the recursion from Equation \eqref{eq:tRecursion} which controls the
local multiplication in $J_c$ ``agrees with'' the recursive definition of $\{u_n\}$ which controls the generators
of $\cali_u^\Z$. We make these remarks more precise in the following proposition, where Theorem \ref{thm:JcIso}
appears as its last assertion.

\begin{prop} 
    Let $(W,S)$ be an irreducible Coxeter system and let $Q$ be its double
    quiver. 
    \begin{enumerate}
    \item 
There exists a unique algebra homomorphism
$\varphi: \Z Q \ra J_C$ such that for every pair of dual arrows $\alpha: a\ra
b$ and $\beta: b\ra a$ in $Q$, we have
\begin{equation}
        \varphi(e_a)= t_{a}, \quad \varphi(e_b)=t_b, \quad
        \varphi(\alpha)=t_{ab},\quad \varphi(\beta)=t_{ba}. 
    \label{eq:varphi}
\end{equation}
Moreover, for all $1\le i\le m:=m(a,b)$, we have 
    \begin{equation}
        \varphi(u_{i-1}(\alpha,\beta))=
        \begin{cases}
            t_{a,i} & \text{if $i<m$};\\
            0 & \text{if $i=m<\infty$}
        \end{cases}
        \label{eq:u1}
    \end{equation}
    and similarly
    \begin{equation}
        \varphi(u_{i-1}(\beta,\alpha))=
        \begin{cases}
            t_{b,i} & \text{if $i<m$};\\
            0 & \text{if $i=m<\infty$},
        \end{cases}
        \label{eq:u2}
    \end{equation}
    where $w_{a,i}, w_{b,i}, t_{a,i}, t_{b,i}$ are as in Proposition
    \ref{prop:DihedralJcProducts}.
\item The map $\varphi$ factors through the ideal $\cali_u^\Z$ and induces a
    homomorphism $\bar\varphi: \Z Q/\cali_u^\Z\ra J_C$ given by 
    $
    \bar\varphi(p) = \varphi(p)
$ 
for all $p\in
    \unbr(Q)$.  
\item The map $\bar\varphi$ is a unital algebra isomorphism,
    therefore $J_C\cong \Z Q/\cali_u^\Z$.  
    \end{enumerate} 
\end{prop}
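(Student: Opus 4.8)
The plan is to prove the three parts in order, since each relies on the previous one. For part (a), I would first construct $\varphi$ on generators: the path algebra $\Z Q$ is freely generated by the stationary paths $e_a$ and the arrows $\alpha$ subject only to the relations $e_ae_b=\delta_{a,b}e_a$ and $e_a\alpha = \alpha = \alpha e_b$ for $\alpha\colon a\to b$. So to get a well-defined algebra homomorphism it suffices to check that the prescribed images $t_a, t_{ab}$ in $J_C$ satisfy the same relations; this is immediate from Proposition \ref{prop:JcProducts}(a)--(b), since $t_at_b=\delta_{a,b}t_a$ and $t_at_{ab}=t_{ab}=t_{ab}t_b$. Uniqueness is automatic because the listed elements generate $\Z Q$. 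For the formulas \eqref{eq:u1} and \eqref{eq:u2}, I would induct on $i$. The base cases $i=1,2$ are $\varphi(e_a)=t_a=t_{a,1}$ and $\varphi(\alpha)=t_{ab}=t_{a,2}$. For the inductive step, apply $\mathrm{Eval}_\alpha$ to the Chebyshev recursion $u_{i}=x\,u_{i-1}-u_{i-2}$; using \eqref{eq:TildeEval} one checks that $u_{i}(\alpha,\beta) = (\alpha\beta\cdots)\,u_{i-1}(\alpha,\beta)_{\text{suitably shifted}} - u_{i-2}(\alpha,\beta)$, i.e. multiplying the evaluation of $u_{i-1}$ on the appropriate side by one more alternating factor. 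Matching this against the recursion \eqref{eq:tRecursion}, $t_{ab}t_{b,i}=t_{a,i-1}+t_{a,i+1}$ (for $i<m-1$), gives $\varphi(u_i(\alpha,\beta))=t_{a,i+1}$ for $i+1<m$; the truncation at $i=m-1$ in \eqref{eq:tRecursion} produces exactly the $0$ in the $i=m$ case of \eqref{eq:u1}. One has to be a little careful about whether $i$ is even or odd (which of $\alpha,\beta$ the evaluation "ends on"), but this is bookkeeping, and the symmetric statement \eqref{eq:u2} follows by swapping the roles of $\alpha$ and $\beta$.

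For part (b), I must show $\cali_u^\Z \se \ker\varphi$. The ideal $\cali_u^\Z$ is generated by the relations $r_u(\alpha)$, which are $0$ when $m_\alpha=\infty$ and $u_{m-1}(\alpha,\beta)$ when $m=m_\alpha<\infty$; by \eqref{eq:u1} (with $i=m$) we have $\varphi(u_{m-1}(\alpha,\beta))=0$, so every generator lies in $\ker\varphi$, hence so does the whole two-sided ideal. This gives the induced map $\bar\varphi\colon \Z Q/\cali_u^\Z\to J_C$. The formula $\bar\varphi(p)=\varphi(p)$ for $p\in\unbr(Q)$ is just the definition of the induced map applied to the distinguished coset representatives; no content beyond part (b) is needed here, though one should remark (as the text already did) that every coset in $\Z Q/\cali_u^\Z$ is represented by a $\Z$-linear combination of unbraided paths, so $\bar\varphi$ is determined by its values on $\unbr(Q)$.

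Part (c) is the crux. I would argue that $\bar\varphi$ is bijective by exhibiting, essentially, an inverse on the level of bases. On one side, $J_C$ has $\Z$-basis $\{t_w : w\in C\}$; on the other, $\Z Q/\cali_u^\Z$ is spanned over $\Z$ by $\{\,p + \cali_u^\Z : p\in\unbr(Q)\,\}$, and via the bijection $\iota\colon C\xrightarrow{\sim}\unbr(Q)$ these spanning sets are indexed by the same set. So it suffices to show that $\bar\varphi$ sends the spanning set $\{p_w+\cali_u^\Z : w\in C\}$ to a $\Z$-basis of $J_C$ — this simultaneously shows the spanning set on the quiver side is a basis (hence $\bar\varphi$ is injective, as a $\Z$-linear map carrying a spanning set to a linearly independent set whose cardinality matches) and that $\bar\varphi$ is surjective. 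To compute $\bar\varphi(p_w)$ for $w=s_1s_2\cdots s_k\in C$: since $p_w=\alpha_1\alpha_2\cdots\alpha_{k-1}$ with $\alpha_i\colon s_i\to s_{i+1}$, and $\varphi$ is an algebra homomorphism, $\bar\varphi(p_w)=\varphi(\alpha_1)\varphi(\alpha_2)\cdots\varphi(\alpha_{k-1}) = t_{s_1s_2}t_{s_2s_3}\cdots t_{s_{k-1}s_k}$. By Proposition \ref{prop:lower} this equals $t_w$ plus an element of $J_C^{(k-2)}$, i.e. a $\Z$-combination of $t_y$ with $\ell(y)<k$. Thus, ordering $C$ (and correspondingly $\unbr(Q)$) by length, the matrix of $\bar\varphi$ with respect to the bases $\{p_w\}$ and $\{t_w\}$ is unitriangular (identity on the diagonal, strictly-lower entries allowed among shorter words), hence invertible over $\Z$. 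I expect the main obstacle to be nailing down the evaluation-of-recursion computation in part (a) cleanly — in particular keeping the even/odd parity and the "which arrow does the alternating word end on" bookkeeping correct so that \eqref{eq:u1}--\eqref{eq:u2} come out exactly as stated, including the truncation matching the vanishing at $i=m$. Once that is in hand, parts (b) and (c) are short, with (c) amounting to the unitriangularity observation above plus the remark that $\bar\varphi$ is unital, which follows since $\bar\varphi(\sum_{a\in Q_0} e_a) = \sum_{a\in S} t_a$ is the unit of $J_C$ noted after Proposition \ref{prop:JcProducts}.
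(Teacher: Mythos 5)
Your proposal is correct and follows essentially the same route as the paper: generators-and-relations for existence and uniqueness of $\varphi$, induction on $i$ against the Chebyshev recursion for \eqref{eq:u1}--\eqref{eq:u2}, vanishing of the generators $r_u(\alpha)$ for part (b), and the leading-term computation $\bar\varphi(p_w)\in t_w+J_C^{(l(w)-2)}$ from Proposition \ref{prop:lower} for part (c). Your unitriangularity phrasing of (c) is just a repackaging of the paper's two-step argument (surjectivity by induction on length, injectivity by examining the top-length terms of a nonzero element), and it is sound.
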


\begin{proof}
(a)  Recall from Section \ref{sec:PathAlgebras} that $\Z Q$ is generated by the arrows and stationary paths of $Q$
subject only to the relations $e_ue_v=\delta_{u,v}e_u$ for all $u,v\in Q_0$ and $e_a\alpha=\alpha=\alpha e_b$ for
every arrow $\alpha: a\ra b$ in $Q_1$.  On the other hand, in $J_c$ we have $t_ut_v=\delta_{u,v}t_u$ for all
$u,v\in Q_0$ and $t_at_{ab}= t_{ab}=t_{ab}t_b$ for every arrow $\alpha: a\ra b$ in $Q_1$ by Proposition
\ref{prop:JcProducts}.  Thus, the relations satisfied by the generators of $\Z Q$ are respected in the assignment
$e_a\mapsto t_a, \alpha\mapsto t_{ab}$ for all arrows $\alpha:a \ra b$ in $Q_1$.  It follows that this assignment
extends to a unique algebra homomorphism $\varphi: \Z Q \ra J_C$ which satisfies Equation \eqref{eq:varphi}. The
homomorphism is unital since
\[
    \varphi(1)=\varphi\left(\sum_{a\in Q_0} e_a\right)=\sum_{a\in
Q_0}\varphi(e_a)=\sum_{a\in Q_0} t_a=1.
\]
Note that for each $w=s_1s_2\dots s_k\in C$, Equation \eqref{eq:varphi} and the fact that $\varphi$ is a
homomorphism imply that $\varphi(p_w)$ is exactly the element $t_{s_1s_2}t_{s_2s_3} \dots t_{s_{k-1}s_k}$. It
follows from Proposition \ref{prop:lower} that
\begin{equation}
    \varphi(p_w)\in t_w+ J_C^{(l(w)-2)}. \quad
    \label{eq:top}
\end{equation}

To prove Equations \eqref{eq:u1} and \eqref{eq:u2}, we induct on $i$. For $i\le 2$, the equations follow from the
definition of $\varphi$ and the fact that $u_0=1, u_1=x$. For $i>2$, the recursion $u_{i}=xu_{i-1}-u_{i-2}$ implies
that 
\[
u_i(\alpha,\beta)= \alpha u_{i-1}(\beta,\alpha)-u_{i-2}(\alpha,\beta), 
\]
therefore
\begin{eqnarray*} 
    \varphi(u_i(\alpha,\beta)) &=& 
    \varphi(\alpha)
    \varphi(u_{i-1}(\beta,\alpha))-\varphi(u_{i-2}(\alpha,\beta))\\
    &=&t_{ab}t_{b,i}-t_{a,i-1}\\
    &=& 
    \begin{cases}
        t_{a,i+1} & \text{if $i<m$};\\
        0 & \text{if $i=m<\infty$},
    \end{cases}
\end{eqnarray*}
where the second equality holds by induction and the third equality follows from Proposition
\ref{prop:DihedralJcProducts}. This proves Equation \eqref{eq:u1}; the proof of Equation \eqref{eq:u2} is similar. 

(b) By construction, the ideal $\cali_u^\Z$ is generated by elements of the form
$r_u(\alpha)=u_{m_\alpha-1}(\alpha,\bar\alpha)$ where $\alpha$ is an arrow in $Q$ with finite weight. Such elements
vanish via $\varphi$ by Equation \eqref{eq:u1}, therefore $\varphi$ factors through $\cali_u^\Z$ and descends to
the map $\bar\varphi$ as claimed.

(c) The map $\bar\varphi$ is unital since $\varphi$ is unital. To show that $\bar\varphi$ is surjective, we prove
that $t_w\in \image\bar\varphi$  for all $w\in C$ by induction on $l(w)$. In the base case where $l(w)=1$, we must
have $w=a$ for some $a\in S$ and hence $t_w= \bar\varphi(e_a)\in \image\bar\varphi$.  When $l(w)>1$,  we have 
\[
    \bar\varphi(p_w)=\varphi(p_w)\in t_w + J_C^{(l(w)-2)}
\]
by \eqref{eq:top} and $J_C^{(l(w)-2)}\se \image\bar\varphi$ by induction, therefore $t_w\in \image\bar\varphi$.

It remains to prove that $\bar\varphi$ is injective. Let $ x=\sum_{p\in \unbr(Q)}c_pp $ be a nonzero element in $\Z
Q/\cali_u^\Z$. We need to show that $\bar\varphi(x)\neq 0$. To do so, let $k$ be the maximal number such that
$c_p\neq 0$ for some unbraided path $p$ of length $k$, and let $\{p_1,\dots,p_n\}$ be the set of paths of length
$k$ appearing with nonzero coefficients in $x$. Let $w_i=\iota^{-1}(p_i)$ and write $c_i:=c_{p_i}$ for all $1\le
i\le n$. Then $l(w_{i})= k+1$ for all $i$ and we have \[ \bar \varphi(x)= \sum_{i=1}^{n} c_i \varphi(p_{i}) \in
\sum_{i=1}^n c_it_{w_i}+J_C^{(k-1)} \] by \eqref{eq:top}.  It follows that $\bar\varphi(x)\neq 0$, and the proof is
complete.  
\end{proof}

\subsection{Proof of Theorem \ref{thm:IsoQuotients}: Dihedral
Case}
\label{sec:IsoQuotients}
Let $\{f_n\}$ be a uniform  family of polynomials over $K$.  As the generators of the ideal $\cali_f$ correspond to
individual pairs of dual arrows in $Q$, we first prove Theorem \ref{thm:IsoQuotients} in the \emph{dihedral} case,
the case where $\abs{S}=2$. Let $S=\{a,b\}$, let $m=m(a,b)\ge 3$, and denote the arrows $a\ra b$ and $b\ra a$ in
$Q$ by $\alpha$ and $\beta$, respectively. If $m=\infty$, then $\cali_f=0$ and the theorem clearly holds, so until
Corollary \ref{coro:DihedralMatch} we assume that $m$ is finite. Under this assumption, we show that $KQ/\cali_f$
is semisimple and find its Artin--Wedderburn decomposition. We start with the category $\repqrf$ in light of the
equivalence between mod-$KQ/\cali_f$ and $\repqrf$ (see \autoref{sec:QuiverReps}). The simple modules of $\repqrf$
turn out to have the following forms:

\begin{lemma} \label{lemm:DihedralSimples}
\begin{enumerate}
\item For each root $\lambda$ of the polynomial $\tilde f_{m-1}$, the assignment
\[
    M(\lambda):=(M_a, M_b,\pa, \pb)=(K,K,\id,\lambda\cdot\id)
\] 
defines a simple representation in $\mathrm{rep}(Q,\cali_f)$. Moreover, if $\lambda$ and $\lambda'$ are distinct
roots of $\tilde f_{m-1}$, then $M(\lambda)\not\cong M({\lambda'})$.

\item If $m$ is even, then the assignments 
\[
S(a):=(K,0,0,0), \quad S(b):=(0,K,0,0)
\]
define two non-isomorphic simple representations in $\repqrf$.
\end{enumerate} 
\end{lemma}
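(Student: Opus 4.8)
The plan is to verify each claimed representation really lies in $\repqrf$, check that it is simple, and then establish the non-isomorphism statements; the only place requiring genuine thought is confirming that $M(\lambda)$ satisfies the relation $r_f(\alpha)$, and dually $r_f(\beta)$, which is where the root condition on $\tilde f_{m-1}$ enters. For part (a), I would first observe that in a representation with $M_a=M_b=K$, $\pa=\id$, $\pb=\lambda\cdot\id$, the composite $\pbpa$ acts as $\lambda$ and more generally the alternating composite $M_{\bar\alpha}M_\alpha M_{\bar\alpha}\cdots$ of length $j$ acts as $\lambda^{\lceil j/2\rceil}$ or similar; the cleanest route is to use Equation \eqref{eq:TildeEval}. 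Since $m-1$ and hence $f_{m-1}$ is even exactly when $m-1$ is even, in either parity the relation $r_f(\alpha)=f_{m-1}(\alpha,\bar\alpha)$ acts on $M(\lambda)$ as $\tilde f_{m-1}(\pbpa)$ times a power of $\pa$ (by \eqref{eq:TildeEval}, reading $\alpha\bar\alpha$ as $\pbpa$ since we apply $M_\alpha$ first). Because $\pa=\id$ and $\pbpa=\lambda\cdot\id$, this operator is $\tilde f_{m-1}(\lambda)\cdot\id=0$ precisely because $\lambda$ is a root of $\tilde f_{m-1}$. The dual relation $r_f(\beta)=f_{m-1}(\beta,\alpha)$ acts as $\tilde f_{m-1}(M_\alpha M_\beta)=\tilde f_{m-1}(\lambda)\cdot\id=0$ by the same computation (note $M_\alpha M_\beta$ also acts as $\lambda$). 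Hence $M(\lambda)\in\repqrf$.

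Next, $M(\lambda)$ is simple: any nonzero proper subrepresentation $N$ would have to satisfy $N_a\subseteq K$, $N_b\subseteq K$, and be stable under $\pa=\id$ and $\pb=\lambda\cdot\id$; since $\pa$ is an isomorphism $M_a\to M_b$, we get $N_a\cong N_b$, and $\dim N=\dim N_a+\dim N_b\in\{0,2\}$, forcing $N=0$ or $N=M(\lambda)$. For the non-isomorphism claim, suppose $\varphi\colon M(\lambda)\to M(\lambda')$ is an isomorphism; then $\varphi_a,\varphi_b$ are nonzero scalars, and the intertwining condition for $\beta\colon b\to a$ reads $\varphi_a\circ(\lambda'\cdot\id)=\varphi_a\circ M_\beta^{M(\lambda')}$… more directly, commuting with $M_\beta$ forces $\lambda'\varphi_a=\varphi_b\lambda$ while commuting with $M_\alpha$ forces $\varphi_a=\varphi_b$, so $\lambda=\lambda'$; this is the whole argument. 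Note the uniform-family hypothesis guarantees $\tilde f_{m-1}$ has no repeated roots and nonzero roots, so the roots $\lambda$ are genuinely distinct and the $M(\lambda)$ are honest representations (the invertibility of $\pb$ is not actually needed here but is consistent).

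For part (b), when $m$ is even, $m-1$ is odd, so $f_{m-1}$ is an odd polynomial and every monomial in $f_{m-1}(\alpha,\bar\alpha)$ is a path of positive (odd) length starting with $\alpha$; likewise $f_{m-1}(\beta,\alpha)$ consists of paths of positive length starting with $\beta$. On $S(a)=(K,0,0,0)$ all maps $\pa,\pb$ are zero, so any path of positive length acts as the zero map, and both relations are satisfied; thus $S(a)\in\repqrf$, and symmetrically $S(b)\in\repqrf$. Each is simple since it is one-dimensional, and $S(a)\not\cong S(b)$ because their dimension vectors differ (the vector space at $a$ is nonzero for $S(a)$ but zero for $S(b)$). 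It is worth remarking why this fails when $m$ is odd: then $m-1$ is even and $f_{m-1}$ has a nonzero constant term $c_0=\tilde f_{m-1}(0)\neq 0$ (since zero is not a root of $\tilde f_{m-1}$), so $f_{m-1}(\alpha,\bar\alpha)$ includes the term $c_0 e_a$, which acts as $c_0\cdot\id_{M_a}\neq 0$ on $S(a)$, violating the relation — so these extra simples exist exactly in the even case, as stated.

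The main obstacle is purely bookkeeping: getting the parity conventions in \eqref{eq:TildeEval} exactly right (which composite, $\pbpa$ versus $\papb$, corresponds to $\alpha\bar\alpha$ under $M_p:=M_{\alpha_n}\circ\cdots\circ M_{\alpha_1}$) and confirming that the same scalar $\tilde f_{m-1}(\lambda)$ controls both $r_f(\alpha)$ and $r_f(\beta)$. Once that is pinned down, simplicity and non-isomorphism are immediate from dimension counting and the fact that $\pa$ is invertible in $M(\lambda)$.
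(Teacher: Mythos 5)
Your proof is correct and follows essentially the same route as the paper's: verify the relations via \eqref{eq:TildeEval} using that $M_\alpha M_\beta$ and $M_\beta M_\alpha$ both act as the scalar $\lambda$, deduce simplicity and $M(\lambda)\not\cong M(\lambda')$ from the scalar intertwiner computation, and handle $S(a),S(b)$ by noting that for $m$ even the relations involve only paths of positive length. One small correction: in the simplicity argument for $M(\lambda)$ the invertibility of $M_\beta$ (equivalently $\lambda\neq 0$, which holds because zero is not a root of $\tilde f_{m-1}$) \emph{is} needed --- otherwise $(0,K,0,0)$ would be a proper nonzero subrepresentation --- so your parenthetical remark that it is not needed should be dropped.
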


\begin{proof}
    (a) Recall that $\cali_f$ is generated by the relations
    \begin{equation}\label{eq:rf_alpha}
        r_f(\alpha)=f_{m-1}(\alpha,\beta)=
    \begin{cases}
   \tilde f_{m-1}(\alpha\beta)  & \text{if $m$ is odd};\\
       \tilde f_{m-1}(\alpha\beta)\alpha& \text{if $m$ is  even},
    \end{cases} 
\end{equation}
and
    \begin{equation}\label{eq:rf_beta}
    r_f(\beta)=f_{m-1}(\beta,\alpha)=
    \begin{cases}
   \tilde f_{m-1}(\beta\alpha)  & \text{if $m$ is odd};\\
   \tilde f_{m-1}(\beta\alpha)\beta& \text{if $m$ is  even}. 
    \end{cases} 
\end{equation}
The maps $\papb$ and $\pbpa$ both equal $\lambda\cdot \id$ as maps from $K$ to $K$.  Since $\lambda$ is a root of
$\tilde f_{m-1}$, it follows that $\tilde f_{m-1}(\papb)=\tilde f_{m-1}(\pbpa)=0$, hence $M(\lambda)$ satisfies the
relations $r_f(\alpha)$ and $r_f(\beta)$ and forms a representation in $\mathrm{rep}(Q,\cali_f)$. Note that
$M(\lambda)$ is simple by basic linear algebra. 

To check that $M(\lambda)\not\cong M({\lambda'})$ for distinct roots $\lambda, \lambda'$ of $\tilde f_{m-1}$, let
$M({\lambda'})=(M'_a,M'_b,M'_\alpha,M'_\beta)$. Then an isomorphism $\phi: M_\lambda\ra M_\mu$ must consist of two
linear isomorphisms $\phi_a: M_a\ra M_a, \phi_b:M_b\ra M_b$ such that \[ \phi_b M_\alpha=M'_\alpha\phi_a,\quad
\phi_a M_\beta=M'_\beta \phi_b.  \] The isomorphisms $\phi_a, \phi_b$ must be multiplication by nonzero scalars
$x,y$, respectively, whence the above equations become $y=x$ and $\lambda y=\lambda' x$. This cannot happen,
therefore $M(\lambda) \not\cong M(\lambda')$.

(b) When $m$ is even the assignments $M_\alpha=M_\beta=0$ clearly satisfy the relations $r_f(\alpha)$ and
$r_f(\beta)$, so $S(a)$ and $S(b)$ define representations in $\repqrf$. Moreover, the representations are simple
and non-isomorphic by dimension considerations.  
\end{proof} 

To prove $\repqrf$ is semisimple, we will use the following linear algebra facts to decompose every representation
in $\repqrf$ into a direct sum of simple modules.
\begin{lemma} \label{lemm:decompose} 
    Let $h\in K[X]$ be a polynomial with degree $k\ge 1$ and with 
 $k$ distinct nonzero roots $z_1, z_2, \ldots, z_k$ in $K$. 
 Let $U$ and $V$ be finite dimensional vector spaces, and let $A: U \to V$ and
 $B: V \to U$ be linear maps  such that 
\begin{equation} \label{eq:odd}
h(BA) = 0_U \quad \quad \quad {\text{and}} \quad \quad \quad h(AB) = 0_V
\end{equation}
or
\begin{equation} \label{eq:even}
h(AB)A = 0_U \quad \quad \quad {\text{and}} \quad \quad \quad h(BA)B = 0_V.
\end{equation}
Then the following results hold.
\begin{enumerate}
\item Both $AB$ and $BA$ are diagonalizable; their eigenvalues lie in the set $\{z_1, z_2, \ldots, z_k\}$ if
    \eqref{eq:odd} holds and in the set  $\{0,z_1, z_2, \ldots, z_k\}$ if  \eqref{eq:even} holds. In particular, we
    have eigenspace decompositions 
    \[ U = U_{z_1} \oplus U_{z_2} \oplus \ldots \oplus U_{z_k}, \quad  V = V_{z_1}
    \oplus V_{z_2} \oplus \ldots \oplus V_{z_k}\]
    if \eqref{eq:odd} holds and 
    \[U = U_0\oplus U_{z_1} \oplus
    U_{z_2} \oplus \ldots \oplus U_{z_k}, \quad V = V_0\oplus V_{z_1} \oplus V_{z_2} \oplus \ldots \oplus V_{z_k}\]
    if \eqref{eq:even} holds, where $U_\lambda$ and $V_\lambda$ denotes the $\lambda$-eigenspace of $BA$ and $AB$
    for each scalar $\lambda$, respectively.

\item For all $1\le i\le k$, the restrictions of $A$ to $U_{z_i}$ and of ${z_i}\inverse\cdot B$ to $V_{z_i}$ form
    mutually inverse isomorphisms. When \eqref{eq:even} holds, the restrictions of $A$ to $U_0$ and of $B$ to $V_0$
    are both zero maps.
\end{enumerate}
\end{lemma}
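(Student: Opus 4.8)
The plan is to handle the two hypotheses \eqref{eq:odd} and \eqref{eq:even} uniformly by means of the elementary intertwining identities
\[
    A\,p(BA)=p(AB)\,A,\qquad B\,p(AB)=p(BA)\,B\qquad(p\in K[X]),
\]
which hold because they hold for monomials $p=X^n$ (both sides of the first being $(AB)^nA$). Writing $h=c\prod_{i=1}^k(X-z_i)$ with $c\in K^\times$, I first record that $h(0)=c\prod_i(-z_i)\ne 0$ since each $z_i\ne 0$; this nonvanishing of $h$ at $0$ is exactly what will force the zero eigenspaces to split off and to be killed by $A$ and $B$.

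For part (a): under \eqref{eq:odd}, $h$ annihilates both $BA$ and $AB$, so the minimal polynomial of each divides $h$ and is therefore a product of distinct linear factors with roots among the $z_i$; hence $BA$ and $AB$ are diagonalizable with spectra in $\{z_1,\dots,z_k\}$, which gives $U=\bigoplus_iU_{z_i}$ and $V=\bigoplus_iV_{z_i}$ (with possibly zero summands). Under \eqref{eq:even}, the intertwining identity gives $A\,h(BA)=h(AB)\,A=0$, hence $(BA)h(BA)=B\bigl(A\,h(BA)\bigr)=0$; so $BA$ is annihilated by $X\,h(X)$, a polynomial with the $k+1$ distinct roots $0,z_1,\dots,z_k$. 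Symmetrically $B\,h(AB)=h(BA)\,B=0$ and $(AB)h(AB)=0$. Thus $BA$ and $AB$ are diagonalizable with spectra in $\{0,z_1,\dots,z_k\}$, yielding the stated decompositions with the extra summands $U_0=\ker(BA)$ and $V_0=\ker(AB)$.

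For part (b), the first step is to note that $A(U_\lambda)\subseteq V_\lambda$ and $B(V_\lambda)\subseteq U_\lambda$ for every scalar $\lambda$: from $ABA=A(BA)$, if $BAu=\lambda u$ then $(AB)(Au)=\lambda(Au)$, and symmetrically. Then for $1\le i\le k$, since $BA=z_i\cdot\id$ on $U_{z_i}$ and $AB=z_i\cdot\id$ on $V_{z_i}$, the maps $A|_{U_{z_i}}$ and $z_i\inverse B|_{V_{z_i}}$ compose to the identity in both orders, hence are mutually inverse isomorphisms. Finally, under \eqref{eq:even}, I would use the spectral resolution $BA=\sum_{i=1}^k z_iP_i$ (the $i=0$ summand being $0\cdot P_0$, with $P_0$ the projection onto $U_0$): applying $h$ and using $h(z_i)=0$ for $i\ge 1$ gives $h(BA)=h(0)P_0$, so from $A\,h(BA)=0$ and $h(0)\ne 0$ we conclude $AP_0=0$, i.e.\ $A|_{U_0}=0$; the mirror computation from $B\,h(AB)=0$ gives $B|_{V_0}=0$.

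The routine parts are the ``distinct roots $\Leftrightarrow$ diagonalizable'' step and the eigenspace bookkeeping in (b). The one place that needs care is the even case: one must first extract $A\,h(BA)=0$ from \eqref{eq:even} via the intertwining identity, then replace $h$ by $X\,h(X)$ to obtain diagonalizability, and recover $A|_{U_0}=0$ from the projection formula $h(BA)=h(0)P_0$ rather than from $h(BA)=0$ (which is false here). I expect no other obstacles.
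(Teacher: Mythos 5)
Your proposal is correct and follows essentially the same route as the paper: minimal polynomials dividing $h$ (odd case) or $X\,h(X)$ (even case) give diagonalizability in (a), and the eigenspace intertwining plus the nonvanishing of $h$ at $0$ give (b). The only cosmetic differences are that you spell out the intertwining identity to justify $(BA)h(BA)=0$ in the even case (which the paper asserts without comment) and that you phrase the final step via the spectral projection $h(BA)=h(0)P_0$, where the paper instead writes $h=X\bar h+c$ and evaluates directly on $u\in U_0$; both are valid.
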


\begin{proof} 
    (a)  The equations in \eqref{eq:odd} and in \eqref{eq:even} imply that the minimal polynomials of both $AB$ and
    $BA$ divide  $h$ and the polynomial $g:=x\cdot h\in K[x]$, respectively. The result follows since the
    polynomials $h$ and $g$ have distinct roots in the sets $\{z_1,\dots, z_k\}$ and $\{0,z_1,\dots,z_k\}$,
    respectively.

    (b) Let $1\le i\le k$. Set $U_i=U_{z_i}, V_i=V_{z_i}$ and $B'=z_i\inverse\cdot B$. Use $\vert$ to denote
    restriction of maps so that, for example, $A\vert_{U_i}$ stands for the restriction of $A$ to $U_i$. By direct
    computation, we have $Au\in V_{i}$ for all $u \in U_{i}$, $B'v\in U_{i}$ for all $v\in V_{i}$, and
    $B'A\vert_{U_i}=\id_{U_{i}}, AB'\vert_{V_i}=\id_{V_{i}}$. This proves the first claim. To prove the second
    claim, assume the equations in \eqref{eq:even} hold and write $h= x \cdot \bar{h} + c$ where $c$ is the
    constant term of $h$. Then $c \neq 0$ since $0$ is not a root of $h$, and we have
    \[
        h(AB)A=Ah(BA)=A(\bar h(BA)\cdot BA+c)=A\bar h(BA)\circ BA+cA.
    \]
    Let $u\in U_0$. Then $BA(u)=0$, therefore 
    \[
        0 =[h(AB)A] (u) =[A\bar(BA)](BA(u))+cA(u)=cA(u).
\]
where the first equality holds since $h(AB)A=0_U$. It follows that $A(u)=0$, so $A\vert_{U_0}$ is the
zero map.  The proof that $B\vert_{V_0}$ is the zero map is similar.
 \end{proof}

\begin{thm} \label{thm:ArtinWedderburn} 
    Let $(W,S)$ be an irreducible Coxeter system where $S=\{a,b\}$ and
    $3\le m:=m(a,b)<\infty$. 
\begin{enumerate} 
    \item Suppose $m$ is odd. Then the category $\mathrm{rep}_K(Q,\cali_f)$ is semisimple and has exactly $(m-1)/2$
        non-isomorphic simple representations, all of dimension 2. The algebra $KQ/\cali_f$ is semisimple, and is
        isomorphic to the direct product of $(m-1)/2$ copies of the matrix algebra $M_{2 \times 2}(K)$.
    \item Suppose $m$ is even. Then the category $\mathrm{rep}(Q,\cali_f)$  is semisimple and has exactly $(m-2)/2
        + 2$ non-isomorphic simple representations; two of these representations have dimension 1 and the other
        representations have dimension 2. The algebra $KQ/\cali_f$ is semisimple, and is isomorphic to the direct
        product of two copies of $K$ and $(m-2)/2$ copies of $M_{2 \times 2}(K)$.   
\end{enumerate} 
\end{thm}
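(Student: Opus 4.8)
The plan is to classify all objects of $\repqrf$ by decomposing an arbitrary representation into a direct sum of the simple representations produced in Lemma~\ref{lemm:DihedralSimples}. This will simultaneously show that $\repqrf$ is semisimple, identify all of its simple objects, and---after transporting along the equivalence $\repqrf\simeq\text{mod-}KQ/\cali_f$ recalled in \autoref{sec:QuiverReps} and applying the Artin--Wedderburn theorem over the algebraically closed field $K$---pin down $KQ/\cali_f$ as a product of matrix algebras.

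The first step is to rewrite the defining relations in the form needed for Lemma~\ref{lemm:decompose}. Put $h:=\tilde f_{m-1}$. By Definition~\ref{def:DistPoly} and the paragraph following Definition~\ref{def:fTilde}, $h$ has degree $k:=\lfloor(m-1)/2\rfloor$ (so $k=(m-1)/2$ if $m$ is odd and $k=(m-2)/2$ if $m$ is even) and exactly $k$ distinct nonzero roots $z_1,\dots,z_k$. For $M=(M_a,M_b,M_\alpha,M_\beta)\in\repqrf$, set $U:=M_a$, $V:=M_b$, $A:=M_\alpha\colon U\to V$, $B:=M_\beta\colon V\to U$. A direct computation with Equation~\eqref{eq:TildeEval} shows: if $m$ is odd then $f_{m-1}$ is even, and $M$ satisfies $r_f(\alpha)$ and $r_f(\beta)$ if and only if $h(BA)=0$ and $h(AB)=0$, i.e.\ \eqref{eq:odd}; if $m$ is even then $f_{m-1}$ is odd, and, using the identity $A\,h(BA)=h(AB)\,A$, the representation $M$ satisfies the relations if and only if $h(AB)A=0$ and $h(BA)B=0$, i.e.\ \eqref{eq:even}. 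I expect this translation---keeping the parity of $f_{m-1}$ and the order of composition straight so that $A$, $B$ and the two cases of Lemma~\ref{lemm:decompose} line up---to be the one genuinely delicate point of the proof; everything after it is essentially an invocation of the two preceding lemmas.

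Next I would decompose $M$ using Lemma~\ref{lemm:decompose} with the polynomial $h$: this yields eigenspace decompositions $M_a=\bigoplus_{i=1}^k (M_a)_{z_i}$ and $M_b=\bigoplus_{i=1}^k (M_b)_{z_i}$ under $BA$ and $AB$ respectively, together with a further summand $(M_a)_0$, $(M_b)_0$ precisely when $m$ is even, and it tells us that $A$ and $z_i^{-1}B$ restrict to mutually inverse isomorphisms between $(M_a)_{z_i}$ and $(M_b)_{z_i}$, while $A$ and $B$ vanish on the $0$-eigenspaces. Hence each pair $\big((M_a)_{z_i},(M_b)_{z_i}\big)$ is a subrepresentation of $M$; picking a basis of $(M_a)_{z_i}$ and transporting it through $A$ to a basis of $(M_b)_{z_i}$ identifies this subrepresentation with $M(z_i)^{\oplus d_i}$ for $d_i=\dim(M_a)_{z_i}$, where $M(z_i)$ is the simple representation of Lemma~\ref{lemm:DihedralSimples}(a) (note $z_i$ is a root of $\tilde f_{m-1}$). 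When $m$ is even, the pair $\big((M_a)_0,(M_b)_0\big)$ is a subrepresentation with vanishing structure maps, hence isomorphic to $S(a)^{\oplus r}\oplus S(b)^{\oplus s}$ with $r=\dim(M_a)_0$, $s=\dim(M_b)_0$. Summing all these subrepresentations reconstructs $M$, so $M$ is semisimple; since $M$ was arbitrary, $\repqrf$ is semisimple and every simple object is one of $M(z_1),\dots,M(z_k)$ (together with $S(a),S(b)$ when $m$ is even).

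Finally I would count and conclude. By Lemma~\ref{lemm:DihedralSimples} the representations $M(z_1),\dots,M(z_k)$ are pairwise non-isomorphic and simple with $\dim M(z_i)=2$, and $S(a),S(b)$ are non-isomorphic simples of dimension $1$ (distinct from the $M(z_i)$ by dimension). This gives exactly $(m-1)/2$ simple objects, all of dimension $2$, when $m$ is odd, and exactly $(m-2)/2+2$ simple objects---two of dimension $1$ and the rest of dimension $2$---when $m$ is even. Through the equivalence $\repqrf\simeq\text{mod-}KQ/\cali_f$, semisimplicity of $\repqrf$ makes $KQ/\cali_f$ a semisimple $K$-algebra, and since $K$ is algebraically closed, Artin--Wedderburn identifies it with $\prod_j M_{d_j\times d_j}(K)$, the $d_j$ ranging over the dimensions of the pairwise non-isomorphic simple modules. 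This produces the direct product of $(m-1)/2$ copies of $M_{2\times 2}(K)$ when $m$ is odd, and the direct product of two copies of $K$ with $(m-2)/2$ copies of $M_{2\times 2}(K)$ when $m$ is even, as claimed.
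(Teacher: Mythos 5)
Your proposal is correct and follows essentially the same route as the paper: translate $r_f(\alpha),r_f(\beta)$ into the hypotheses \eqref{eq:odd} or \eqref{eq:even} of Lemma~\ref{lemm:decompose} according to the parity of $m$, decompose an arbitrary representation into eigenspace pieces identified with the simples $M(z_i)$ (plus copies of $S(a),S(b)$ in the even case), and finish with the equivalence $\repqrf\simeq\text{mod-}KQ/\cali_f$ and Artin--Wedderburn. No gaps.
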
 

\begin{proof}
Let $M=(M_a,M_b,M_\alpha, M_\beta)$ be a representation in $\repqrf$ where $\alpha$ and $\beta$ are the arrows
$a\ra b$ and $b\ra a$ in $Q$, respectively.  Set $h=\tilde f_{m-1}, U=M_a, V=M_b, A=M_\alpha$ and $B=M_\beta$. If
$m$ is odd, then the equations in \eqref{eq:odd} hold by Equations \eqref{eq:rf_alpha} and \eqref{eq:rf_beta}.
Using Lemma \ref{lemm:decompose}, we may then decompose $M$ into a direct sum where each summand is of the form
$N(\lambda):=(U_\lambda, V_\lambda, A\vert_{U_\lambda}, B\vert_{V_\lambda})$ where $\lambda$ is one of the
$(m-1)/2$ roots of $h=\tilde f_{m-1}$ and $B\vert_{V_\lambda}A\vert_\lambda=\lambda\cdot \id_{U_\lambda},
A\vert_{U_\lambda}B\vert_{V_\lambda}=\lambda\cdot \id_{V_\lambda}$. It is easy to verify that $N(\lambda)$ is
isomorphic to the representation $M(\lambda)$ from Lemma \ref{lemm:DihedralSimples}. The claims in Part (a) now
follow from the Artin--Wedderburn theorem and the equivalence between $\repqrf$ and mod-$KQ/\cali_f$. Similarly, if
$m$ is even, then the equations in \eqref{eq:even} hold and we may use Lemma \ref{lemm:decompose} to decompose $M$
into a direct sum of  the representations 
\[
    \ker\alpha:=(U_0,0,0,0),\quad \ker\beta:=(0,V_0,0,0)
\]
and simple representations isomorphic to $M(\lambda)$ where $\lambda$ is one of the $(m-2)/2$ roots of $\tilde
f_{m-1}$. The represenations $\ker\alpha$ and $\ker\beta$ further decompose into $\dim(U_0)$ and $\dim(V_0)$ copies
of the modules $S(a)$ and $S(b)$ from Lemma \ref{lemm:DihedralSimples}, respectively. Part (b) follows.
\end{proof}

We are ready to prove that in the case $\abs{S}=2$, the isomorphism type of the algebra $KQ/\cali_f$ does not
depend on the choice of the uniform  family of polynomials $\{f_n\}$. Note that we no longer assume that $m$ is
finite in the result below.

\begin{corollary} 
    \label{coro:DihedralMatch}
    Let $(W,S)$ be an irreducible Coxeter system where $S=\{a,b\}$ and $3\le m:=m(a,b)\le \infty$.  Let
    $\{f_n\},\{g_n\}$ be two uniform  families of polynomials over $K$. Then there is an algebra isomorphism $\Phi:
    KQ/\cali_f \ra KQ/\cali_g$ such that $\Phi(e_i)=e_{i}$ for $i\in \{a,b\}$.
\end{corollary}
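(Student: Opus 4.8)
The plan is to handle the two cases $m<\infty$ and $m=\infty$ separately, building the isomorphism $\Phi$ from the structural results already in hand. When $m=\infty$ we have $\cali_f=\cali_g=0$, so $KQ/\cali_f=KQ/\cali_g=KQ$ and we may take $\Phi=\id$; this trivially fixes $e_a$ and $e_b$. So the real content is the case $3\le m<\infty$, and there the key input is Theorem \ref{thm:ArtinWedderburn}: it tells us that both $KQ/\cali_f$ and $KQ/\cali_g$ are semisimple with the same Artin--Wedderburn type (a product of $(m-1)/2$ copies of $M_{2\times 2}(K)$ if $m$ is odd, and a product of two copies of $K$ together with $(m-2)/2$ copies of $M_{2\times 2}(K)$ if $m$ is even). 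So an abstract algebra isomorphism certainly exists; the point of the corollary is to produce one that respects the idempotents $e_a,e_b$.

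First I would set up the central primitive idempotents explicitly. In $KQ/\cali_f$, using the eigenspace decomposition language of Lemma \ref{lemm:decompose}, each root $\lambda$ of $\tilde f_{m-1}$ gives a two-dimensional simple block spanned (as a representation) by the $\lambda$-isotypic pieces, and when $m$ is even there are two additional one-dimensional blocks $S(a),S(b)$. Concretely, inside $KQ/\cali_f$ one can write down, for each root $\lambda$ of $\tilde f_{m-1}$, an idempotent $\epsilon_\lambda^f$ — obtained by applying a Lagrange-interpolation idempotent built from $\tilde f_{m-1}$ to $\alpha\beta$ (equivalently to $\beta\alpha$) — and these, together with $\epsilon_0^f:=e_a+e_b-\sum_\lambda \epsilon_\lambda^f$ in the even case, give a complete set of orthogonal central idempotents summing to $e_a+e_b=1$. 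The subtlety is that $\epsilon_\lambda^f$ is central in $KQ/\cali_f$ but does not lie in $K e_a\oplus K e_b$; however $\epsilon_\lambda^f e_a$ and $\epsilon_\lambda^f e_b$ are the rank-one matrix units picking out the two one-dimensional weight spaces of the simple $M(\lambda)$. So I would define $\Phi$ block by block: in the odd case fix a bijection between the roots of $\tilde f_{m-1}$ and those of $\tilde g_{m-1}$, and on the $\lambda$-block of $KQ/\cali_f$ (a copy of $M_{2\times 2}(K)$) send the matrix unit basis determined by $e_a,e_b$ and by $\alpha,\beta$ to the matrix unit basis of the corresponding $\mu$-block of $KQ/\cali_g$ determined by $e_a,e_b$ and $\alpha,\beta$ there, being careful about the eigenvalue rescaling (the "off-diagonal" generator $\alpha$ acts as $\id$ on the weight spaces while $\beta$ acts as multiplication by $\lambda$ resp.\ $\mu$, so the matrix-unit dictionary differs by a scalar that I would track). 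In the even case do the same for the $M_{2\times 2}$-blocks and send the two one-dimensional blocks $Ke_a'$ (the $S(a)$-block) and $Ke_b'$ to their counterparts identically.

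The main thing to verify is that the resulting map is a well-defined algebra homomorphism with $\Phi(e_a)=e_a$, $\Phi(e_b)=e_b$; once that is done, bijectivity is automatic by comparing dimensions (both algebras have dimension $\sum_{\lambda}4+\{0\text{ or }2\}$, which depends only on $m$) or by exhibiting the obvious inverse. To check it is a homomorphism it suffices to check it on the algebra generators $e_a,e_b,\alpha,\beta$ of $KQ/\cali_f$ and to check that the defining relations — $e_a^2=e_a$, $e_b^2=e_b$, $e_ae_b=0$, $e_a\alpha=\alpha=\alpha e_b$, $e_b\beta=\beta=\beta e_a$, and $r_f(\alpha)=r_f(\beta)=0$ — are sent to relations that hold in $KQ/\cali_g$. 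The idempotent and arrow relations are preserved by construction. The only real computation is that $\Phi(\alpha)$ and $\Phi(\beta)$, which are a prescribed pair of "off-diagonal" elements of the semisimple algebra $KQ/\cali_g$ supported on the correct blocks, actually satisfy $f_{m-1}(\Phi(\alpha),\Phi(\beta))=0$; this reduces, block by block, to the statement that on the $\mu$-block the product $\Phi(\alpha)\Phi(\beta)$ acts as the scalar $\mu$, so that $\tilde f_{m-1}$ evaluated at it vanishes precisely because $\mu$ was chosen to be a root of $\tilde f_{m-1}$ (here I use that under the bijection $\lambda\leftrightarrow\mu$ the relevant scalar on the $KQ/\cali_g$ side is $\mu$, not $\lambda$ — this is exactly what the block-wise definition arranges). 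The one-dimensional blocks are killed by both $\alpha$ and $\beta$ on both sides, so they cause no trouble.

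I expect the main obstacle to be purely bookkeeping: choosing the matrix-unit bases on each $M_{2\times 2}$-block consistently so that the images of $\alpha$ and $\beta$ come out exactly right (in particular getting the eigenvalue-rescaling factor $\mu/\lambda$, or rather $\mu$ vs.\ $\lambda$, in the correct place), and confirming that the block idempotents $\epsilon_\lambda^f,\epsilon_\mu^g$ really are expressible inside the respective path-algebra quotients via interpolation polynomials in $\alpha\beta$ — which follows from Lemma \ref{lemm:decompose}(a), since $\alpha\beta$ is diagonalizable there with the roots of $\tilde f_{m-1}$ (and possibly $0$) as eigenvalues. No deep idea is needed beyond Theorem \ref{thm:ArtinWedderburn}; the corollary is essentially the observation that the Wedderburn decomposition can be chosen compatibly with the fixed idempotents $e_a,e_b$, whose images $e_ae_b=0$, $e_a+e_b=1$ already pin down how $1$ splits across the blocks.
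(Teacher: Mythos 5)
Your proposal is correct in outline but follows a genuinely different route from the paper. The paper does not construct $\Phi$ block by block inside the quotient algebras; instead it takes the two abstract Artin--Wedderburn isomorphisms $\phi: KQ/\cali_f\ra B$ and $\psi: KQ/\cali_g\ra B$ onto a common model $B$ and then shows, by a conjugacy argument on idempotents, that $\phi(e_a),\phi(e_b)$ and $\psi(e_a),\psi(e_b)$ sit in $B$ in conjugate configurations: the three inputs are that these elements are idempotents, that they sum to $1$, and the computation $\dim\bigl(e_a(KQ/\cali_f)e_a\bigr)=r-1$ via the explicit basis $e_a,\alpha\beta,\dots,(\alpha\beta)^{r-2}$. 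This forces each $2\times 2$ coordinate of $\phi(e_a)$ to be conjugate to $E_{11}$, so an automorphism $\eta$ of $B$ can be inserted to make $\Phi:=\psi\inverse\eta\phi$ fix $e_a$ and $e_b$. Your approach stays inside $KQ/\cali_f$ and $KQ/\cali_g$, builds the central block idempotents by Lagrange interpolation in $\alpha\beta$ (legitimate by Lemma \ref{lemm:decompose}), and matches matrix-unit bases block by block with a rescaling of $\alpha,\beta$. What your route buys is an explicit formula for $\Phi$ on the generators and the identity $\Phi(e_i)=e_i$ holding by fiat rather than after correction by $\eta$; what the paper's route buys is that one never has to track the rescaling scalars or verify the relation $f_{m-1}(\Phi(\alpha),\Phi(\beta))=0$ by hand.

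Two points to tighten. First, there is a slip in your verification step: you write that on the $\mu$-block $\tilde f_{m-1}$ vanishes "because $\mu$ was chosen to be a root of $\tilde f_{m-1}$" --- but $\mu$ is a root of $\tilde g_{m-1}$. The correct statement, which your rescaling is designed to arrange, is that $\Phi(\alpha)\Phi(\beta)$ acts on the $\mu$-block as the \emph{matched} scalar $\lambda$, a root of $\tilde f_{m-1}$; as literally written the sentence is circular. Second, "bijectivity is automatic by comparing dimensions" is not enough on its own; you need either surjectivity (e.g.\ that $\alpha$ lies in the image, which requires noting the rescaling factors are nonzero because $0$ is not a root of $\tilde f_{m-1}$ or $\tilde g_{m-1}$) or, as you also suggest, the symmetric construction of the inverse. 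Either fix is routine.
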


\begin{proof} 
When $m=m(a,b)=\infty$, we have $\cali_f=\cali_g=0$, so we may take $\phi$ to be the identity map. Now assume $m$
is finite. We first treat the case where $m$ is even. Consider the direct product $ B=B_1\times B_2\times B_3\times
\dots\times B_r$ where $r=(m-2)/2+2$, $B_1=B_2=K$, and $B_i$ equals the matrix algebra $M_{2\times 2}(K)$ for all
$3\le i\le r$.  By Theorem \ref{thm:ArtinWedderburn}, there exist algebra isomorphisms $\phi: KQ/\cali_f\ra B$ and
$\psi:KQ/\cali_g\ra B$. 

Let $x=\phi(e_1), y=\phi(e_2)$ and write $x=(x_1,\dots,x_r), y=(y_1,\dots,y_r)$. Then we have:

\begin{enumerate}
    \item Since $e_1, e_2$ are idempotents, $x,y$ must be idempotents, therefore $x_i, y_i$ are idempotents in
        $B_i$ for all $1\le i\le r$.  This implies that $x_1,x_2,y_1,y_2\in\{0,1\}$ and that for all $3\le i\le r$,
        $x_i, y_i$ must be each conjugate to the zero matrix, the identity matrix, or the matrix $E_{11}:=\bm{1
        &0\\0&0}$.

    \item Since $1=e_1+e_2$ in $KQ/\cali_f$, we must have $x+y=1$ and hence $x_i+ y_i=1$ for all $1\le i\le r$.

    \item Since $\phi$ is an isomorphism, we have
            \[
                \dim (xBx)=\dim (\phi(e_1)B \phi(e_1))=\dim(e_1(KQ/\cali_f)e_1).
\]
    Here, we have $\dim (xBx)=\sum_{i=1}^r \dim (x_iB_ix_i)$ in the direct product $B$. We also have $\dim
    (e_1(KQ/\cali_f) e_1)=r-1$ because it is easy to see that the classes of the elements $e_1, \alpha\beta,
    (\alpha\beta)^2,\dots,(\alpha\beta)^{r-2}$ form a basis of $e_1(KQ/\cali_f)e_1$. It follows that $\sum_{i=1}^r
    \dim(x_iB_ix_i)=r-1$.  Similarly, we must have $\sum_{i=1}^r \dim (y_iB_iy_i)=r-1$. Notice that for all $1\le
    i\le r$, the dimensions of $x_iB_ix_i$ and $y_iB_iy_i$ depend only on the conjugacy classes of the idempotents
    $x_i, y_i$, respectively.
\end{enumerate}
    By straightforward dimension considerations, the above three facts force that $x_i, y_i$ are conjugate to
    $E_{11}$ for all $3\le i\le r$ and that we either have $x_1=0,x_2=1, y_1=1, y_2=0$ or have $x_1=1, x_2=0,
    y_1=0, y_2=1$. Similarly, the same conclusions apply to the coordinates $x'_i, y'_i$ of the elements
    $x'=(x'_1,x'_2,x'_3,\dots, x'_r)=\psi(e_1)$ and $y'=(y'_1,y'_2,y'_3,\dots, y'_r)=\psi(e_2)$. Thus, we either
    have $x\sim x', y\sim y'$ or have $x\sim y', y\sim x'$ where $\sim$ means two elements are conjugate. In both
    cases, it is easy to find an automorphism $\eta$ of $B$ such that the map $\Phi:=\psi\inverse \eta\phi :
    KQ/\cali_f\ra KQ/\cali_g$ is an isomorphism sending $e_i$ to $e_i$ for $i\in \{1,2\}$.  This proves the
    corollary in the case where $m$ is even.

    The proof for the case where $m$ is odd is similar but simpler: let $B=B_1\times \dots \times B_r$ where
    $r=(m-1)/2$ and $B_i=M_{2\times 2}(K)$ for all $1\le i\le r$, then consider the isomorphisms $\phi, \psi$
    guaranteed by Theorem \ref{thm:ArtinWedderburn} as before. This time, facts similar to (a), (b), (c) will force
    all coordinates of $\phi(e_1),\phi(e_2),\psi(e_1),\psi(e_2)$ to be conjugate to $E_{11}$, allowing us to form
    an isomorphism $\Phi$ with the desired properties as before.  
\end{proof}

\subsection{Proof of Theorem \ref{thm:IsoQuotients}: General Case}
\label{sec:isoquotients_general}
We now prove Theorem \ref{thm:IsoQuotients} for a general Coxeter system $(W,S)$. The rough idea is to notice that
each edge in the Coxeter diagram corresponds to a dihedral system, so we can take the ``local'' isomorphisms
provided by Corollary \ref{coro:DihedralMatch} and then assemble them to a ``global'' isomorphism between the
quotients of $KQ$.

\begin{proof}[Proof of Theorem \ref{thm:IsoQuotients}]
Let $E$ be the set of edges of the Coxeter diagram of $(W,S)$.  For each $e\in E$ of the form $a-b$, let $\alpha:
a\ra b$ and $\beta:b\ra a$ be the dual arrows arising from $e$ in $Q$ and consider the subquiver
$Q_e=(\{a,b\},\{\alpha,\beta\})$ of $Q$. Let $\cali_f(e),\cali_g(e)$ be the evaluation ideals of $\{f_n\}$ and
$\{g_n\}$ in $Q_e$, respectively. Fix an isomorphism $\Phi_e: KQ_e/\cali_f(e) \ra KQ_e/\cali_g(e)$ such that
$\Phi_e(e_a)=e_a, \Phi_e(e_b)=e_b$ for $e$; such an isomorphism exists by Corollary \ref{coro:DihedralMatch}. Note
that $KQ_e/\cali_g(e)$ naturally embeds into $KQ/\cali_g$, so we can naturally view an element of $KQ_e/\cali_g(e)$
as an element in $KQ/\cali_g$. We will do so without further comment.

Let  $Q^{\le 1}=\{e_a:a\in Q_0\}\cup Q_1$ be the set of stationary paths and arrows of $Q$.  Consider the function
$\phi: Q^{\le 1} \ra KQ/\cali_{g}$ such that for every edge $e=\{a,b\}$ in $G$ and the arrows $\alpha:a \ra  b,
\beta: b\ra a$ in $Q$, we have 
\begin{equation*}
    \label{eq:phi}
    \phi(e_a)=\Phi_e(e_a), \; \phi(e_b)=\Phi_e(e_b), \;
    \phi(\alpha)=\Phi_e(\alpha),\; \phi(\beta)=\Phi_e(\beta).
\end{equation*}
This function is well-defined because even if a vertex $a$ in $G$ is incident to two distinct edges $e, e'$ in $G$,
the maps $\Phi_e$ and $\Phi_{e'}$ both send $e_a$ to $e_a$, causing no ambiguity for the value of $\phi(e_a)$.
Next, recall again that the path algebra $KQ$ is generated by $Q^{\le 1}$ subject only to the relations that
$e_ue_v=\delta_{u,v}e_u$ for $u,v\in Q_0$ and the  relations $e_a\alpha =\alpha=\alpha e_b$ for each arrow $\alpha:
a\ra b$ in $Q_1$, and note that the map $\phi$ respects these relations: we have $
\phi(e_u)\phi(e_v)=e_ue_v=e_ue_v=\delta_{u,v}e_u=\delta_{u,v}\phi(e_u), $ $
\phi(e_a)\phi(\alpha)=\Phi_e(e_a)\Phi_e(\alpha)=\Phi_e(e_a\alpha)=\Phi(\alpha)=\phi(\alpha)$, and similarly
$\phi(\alpha)\phi(e_b)=\phi(\alpha)$. It follows that $\phi$ extends to a unique homomorphism $\Phi:K Q\ra
KQ/\cali_g$ with $\Phi(x)=\phi(x)$ for all $Q^{\le 1}$. Finally, for each edge $e:a-b$ in $Q$ and the corresponding
arrows $\alpha:a\ra b,\beta:b\ra a$, the restriction of $\Phi$ to $KQ_e$ agrees with $\Phi_e$, therefore $\Phi$
sends both $r_f(\alpha)$ and $r_f(\beta)$ to zero because $\Phi_e$ does. It follows that $\Phi$ factors through
$\cali_f$ to induce a homomorphism $\bar\Phi: KQ/\cali_f\ra KQ/\cali_g$.
Starting from the collection $\{\Psi_e:e\in E\}$ where $\Psi_e=\Phi_e\inverse$ for all $e\in E$, we may obtain in
the same way a homomorphism $\bar\Psi: KQ/\cali_g\ra KQ/\cali_f$, and it is clear that $\bar\Psi$ and $\bar\Phi$
are mutual inverses, therefore $KQ/\cali_f\cong KQ/\cali_g$.  
\end{proof}

\section{Quiver Contractions}
\label{sec:A-modTools}

Let $J_C$ be the subregular $J$-ring of an arbitrary Coxeter system $(W,S)$, let $K$ be an algebraically closed
field of characteristic zero, and let $A=A_K=K\otimes_{\Z} J_C$. Let mod-$A$ be the category of finite dimensional
right $A$-modules. The rest of the paper is dedicated to the study of mod-$A$.

In this section we introduce a procedure to modify a quiver $Q$ to a new quiver $\bar Q$ such that the algebra $K
\bar Q/\bar\cali_f$ is Morita equivalent to $K Q/\cali_f$ for any uniform  family of polynomials $\{f_n\}$ over
$K$, where $\bar\cali_f$ is the evaluation ideal of $\{f_n\}$ in $K\bar Q$. We call the procedure a \emph{quiver
contraction}.  In applications, we will often iterate contractions to obtain sequences of the form 
\begin{equation} \label{eq:repeatedContractions} 
    Q^{(0)}:=Q\ra Q^{(1)} \ra \dots\ra
Q^{(n)} 
\end{equation} 
where $Q$ is the double quiver of $(W,S)$. Denote the evaluation ideal of $\{f_n\}$ in $K Q^{(i)}$ by
$\cali_f^{(i)}$ for each $i$. Then  $A\cong K Q/\cali_f$ by Theorems \ref{thm:JcIso} and \ref{thm:IsoQuotients},
therefore mod-$A$ is equivalent to $\rep_K (Q^{(i)},\cali^{(i)}_f)$ for all $0\le i\le n$. For this reason,  we
shall develop tools for studying the last type of category in this section to prepare for the study of mod-$A$ in
Section \autoref{sec:mod-AResults}. 

\subsection{Definition of Quiver Contractions} 
\label{sec:contractionDefinitions}
Consider the following generalization of double quivers of Coxeter systems:
\begin{definition}
\label{def:generalizedDoubleQuivers}
A \emph{generalized double quiver} is a triple $(Q,d,m)$ consisting of a quiver
$Q=(Q_0,Q_1)$, a map
$d: Q_1\ra Q_1$, and a
map $m: Q_1\ra \Z_{\ge 1}\cup \{\infty\}$ such that  
\begin{enumerate}
    \item $d(Q_{ab})=Q_{ba}$ for all $a,b\in Q_0$, where
        $Q_{c,d}$ denotes the set of all arrows in $Q$ from $c$ to
        $d$ for all
        $c,d\in Q_0$.
    \item $d^2(\alpha)=\alpha$ for all $\alpha\in Q_1$;
    \item $m(\alpha)=m({d(\alpha)})$ for all $\alpha\in Q_1$.
\end{enumerate}
Given such a triple, we also call $Q$ a \emph{generalized double quiver}. We say that two arrows $\alpha,\beta\in
Q_1$ are \emph{dual} to each other if $\beta=d(\alpha)$, and call $m(\alpha)$ the \emph{weight} of $\alpha$ for all
$\alpha\in Q_1$.  
\end{definition}

\noindent Note that we may (and will) naturally view the double quiver $Q$ of a Coxeter system as a generalized
double quiver by setting $m(\alpha)=m_\alpha$ and $d(\alpha)=\bar\alpha$ for all $\alpha\in Q_1$.

We now define quiver contractions as operations on generalized double quivers $(Q,d,m)$. Roughly speaking, we will
define a contraction along a suitable pair of arrows $\alpha:a\ra b$ and $\beta:b\ra a$ where $a,b$ are distinct
vertices in $Q$. The contraction will identify $a,b$ by collapsing them into a new vertex $v_{ab}$, replace
$\alpha,\beta$ by a loop at $v_{ab}$, and reroute all other arrows incident to $a$ or $b$ in $Q$ to new arrows
incident to $v_{ab}$.  The assignments of duals and weights of arrows in the new quiver will be naturally inherited
from $d$ and $m$. 

\begin{definition}
    \label{def:quiverContraction}
    Let $(Q,d,m)$ be a generalized double quiver.
 A pair of arrows $\{\alpha,\beta\}$ is called \emph{contractible}
 if they are of the form $\alpha:a\ra b,\beta:b\ra a$ where $a,b$ are distinct
            vertices in $Q$, $\beta=d(\alpha)$, and
            $m(\alpha)=m(\beta)$ is an odd integer that is at least 3. 

        For a contractible pair of arrows $\{\alpha:a\ra b,\beta:b\ra
            a\}$, the \emph{contraction of
            $(Q,d,m)$ along $\{\alpha,\beta\}$} is the generalized
            double quiver $(\bar Q,\bar d, \bar m)$ where 
            \begin{enumerate}
                \item The vertex set of the quiver $\bar Q$ is
                    \[
                        \bar Q_0:=Q_0\setminus\{a,b\}\sqcup\{v_{ab}\},
                    \]
                    where $v_{ab}\notin Q_0$  is a newly introduced
                    vertex. The arrow set $\bar Q_1$ of $\bar Q$ is defined as
                    follows: write
                    \[
                        c':=
                        \begin{cases}
                            v_{ab}&\text{ if $c\in \{a,b\}$};\\
                            c &\text{ otherwise}
                        \end{cases}
                    \]
                    for all $c\in Q_0$ and define $\gamma'$ to be the arrow $u'\ra v'$ for each arrow $\gamma: u\ra
                    v$ in $Q_1$, then let
\[
    \bar Q_1:=\{\gamma': \gamma\in
    Q_1\setminus\{\alpha,\beta\}\}\sqcup\{\ve{ab}\},
\]
where $\ve{ab}\notin Q_1$ is a newly introduced loop at $v_{ab}$.  
\item $\bar d$ is defined by $\bar d(\ve{ab})=\ve{ab}$ and
    $\bar d(\gamma')=d(\gamma)'$ for all $\gamma\in
    Q_1\setminus\{\alpha,\beta\}$.
\item $\bar m$ is defined by $\bar m(\ve{ab})=m(\alpha)$ and
    $\bar m(\gamma')=m(\gamma)$ for all $\gamma\in
    Q_1\setminus\{\alpha,\beta\}$. 
    \end{enumerate}
\end{definition}

\noindent Note that quiver contractions introduce loops and may lead to multiple pairs of arrows between two
distinct vertices in the resulting quiver (see the quiver $Q^{(2)}$ in Example \ref{eg:LaurentPolynomials}),
features that cannot be present in double quivers of Coxeter diagrams. This is the reason why we do not forbid
these features in Definition \ref{def:generalizedEvaluationIdeals}. On the other hand, the generalization from
double quivers to generalized ones is mild enough that we can extend the definition of evaluation ideals easily, at
least for the cases we are interested in:

\begin{definition}
    \label{def:generalizedEvaluationIdeals}
    Let $\{f_n:n\ge 2\}$ be a uniform  family of
    polynomials over $K$, and let $(Q,d,m)$ be a generalized double quiver. We
    define the \emph{evaluation ideal} of $\{f_n\}$ in $KQ$ to be the two-sided
    ideal $\cali_f\se KQ$ given by 
    \[
        \cali_f:=\ip{r_f(\alpha):\alpha\in Q_1}
    \]
    where 
\[
    r_f(\alpha)=
    \begin{cases}
        0 & \text{if $m=\infty$};\\
        f_{m-1}(\alpha, d(\alpha)) & \text{if $m<\infty$ and $d(\alpha)\neq \alpha$};\\
        \tilde f_{m-1}(\alpha) & \text{if $m<\infty$ and $d(\alpha)=\alpha$},
    \end{cases}
\]
where $m=m(\alpha)$. Here, as in Equation \eqref{eq:TildeEval}, the evaluation of $\alpha$ through a constant term
$c$ in $\tilde f_{m-1}(\ve{})$ returns $ce_{a}$ for $a=\source(\alpha)$.  For example, if $\alpha$ is a self-dual
loop $\ve{}:a\ra a$ with $m=5$, then $r_f(\alpha)=\ve{}^2-e_a$ if $f_4=x^4-1$ and $r_f(\alpha)=2\ve{}^2-\ve{}-3e_a$
if $f_4=2x^4-x^2-3$.  
\end{definition}

\begin{remark}
    In this paper we are only interested in generalized double quivers $\bar Q$ obtained from the double quiver of
    a Coxeter diagram via iterated contractions. In this case, every self-dual arrow $\alpha$ in $\bar Q$ must be
    either a loop of the form $\varepsilon=\ve{ab}$ at a vertex $v=v_{ab}$ introduced during a contraction of a
    quiver $Q$ along a dual pair of arrows $\gamma:a\ra b,\delta:b\ra a$ or a reroute of such a loop. In
    particular, $m$ must be a finite, odd integer, so the third case in the definition of $r_f(\alpha)$ applies and
    gives $r_f(\alpha)=\tilde f_{m-1}(\ve{})$.  The relation $r_f(\alpha)=\tilde f_{m-1}(\ve{})$ in $K\bar Q$
    mirrors the relation $r_f(\gamma)=\tilde f_{m-1}(\gamma\delta)$ in the evaluation ideal $\cali_f$ of $KQ$ via
    the replacements $\gamma\delta\mapsto \ve{}$ and $a\mapsto v$.  For example, if $m=5$ and $f_4=x^4-1$, then the
    relation $r_f(\gamma)=\tilde f_{4}(\gamma\delta)=(\gamma\delta)^2-e_a\in\cali_f$ is mirrored by the relation
    $r_f(\ve{})=\tilde f_4(\ve{})=\ve{}^2-e_v$.  \label{rmk:sqrt} 
\end{remark}

Our main result on contractions is the following theorem.
\begin{thm} 
    \label{thm:MoritaEquivalence} 
    Let $(Q,d,m)$ be a generalized double quiver and let $(\bar Q, \bar d, \bar m)$ be a contraction of $(Q,d,m)$
    along a contractible pair of arrows $\{\alpha,\beta\}$. Let $\{f_n:n\ge 2\}$ be a uniform family of polynomials
    over $K$, and let $\cali_f$ and $\bar\cali_f$ be the evaluation ideal of $\{f_n\}$ in $K Q$ and $K \bar Q$,
    respectively. Then the algebras  $K Q/\cali_f$ and $K \bar Q/\bar\cali_f$ are Morita equivalent.
\end{thm}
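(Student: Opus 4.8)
The plan is to establish the equivalence at the level of representation categories. Write $A=KQ/\cali_f$ and $\bar A=K\bar Q/\bar\cali_f$; by \autoref{sec:QuiverReps} it suffices to produce an equivalence between $\rep_K(Q,\cali_f)$ and $\rep_K(\bar Q,\bar\cali_f)$ realized by functors that make sense for arbitrary (not necessarily finite dimensional) modules, since that is exactly what Morita equivalence amounts to. Let $\{\alpha\colon a\to b,\ \beta\colon b\to a\}$ be the contracted pair, $m=m(\alpha)=m(\beta)$ the common (odd, $\ge 3$) weight, and $\varepsilon=\ve{ab}$ the new loop at $v_{ab}$. The crucial preliminary observation is that for every $M\in\rep_K(Q,\cali_f)$ the maps $M_\alpha\colon M_a\to M_b$ and $M_\beta\colon M_b\to M_a$ are isomorphisms: since $m-1$ is even, the relations $r_f(\alpha),r_f(\beta)\in\cali_f$ say precisely that $\tilde f_{m-1}(M_\beta M_\alpha)=0$ on $M_a$ and $\tilde f_{m-1}(M_\alpha M_\beta)=0$ on $M_b$, and since $\tilde f_{m-1}$ has nonzero constant term (part (b) of Definition \ref{def:DistPoly}) both $M_\beta M_\alpha$ and $M_\alpha M_\beta$ are invertible, whence $M_\alpha$ and $M_\beta$ are each injective and surjective.

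Using this I would build the ``folding'' functor $F\colon\rep_K(Q,\cali_f)\to\rep_K(\bar Q,\bar\cali_f)$: put $F(M)_{v_{ab}}=M_a$ and $F(M)_c=M_c$ for $c\notin\{a,b\}$; let the new loop act by $F(M)_\varepsilon=M_\beta M_\alpha$, the action of the length-two loop $\alpha\beta$ at $a$; and for each arrow $\gamma\ne\alpha,\beta$ of $Q$, with reroute $\gamma'$ in $\bar Q$, let $F(M)_{\gamma'}$ be $M_\gamma$ pre-composed with $M_\alpha$ when $\source(\gamma)=b$ and post-composed with $M_\alpha^{-1}$ when $\target(\gamma)=b$ (so a loop at $b$ is conjugated by $M_\alpha$), with $F$ the identity on vertex maps away from $b$ for morphisms. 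The point of these particular insertions is telescoping: for a dual pair $\gamma,d(\gamma)$ in $Q\setminus\{\alpha,\beta\}$ the relevant composition $F(M)_{d(\gamma)'}\circ F(M)_{\gamma'}$ collapses, via $M_\alpha^{-1}M_\alpha=\id$, back to $M_{d(\gamma)}\circ M_\gamma$ (conjugated when $\gamma$ is a loop at $b$), so — using $\bar d(\gamma')=d(\gamma)'$ and $\bar m(\gamma')=m(\gamma)$ — the representation $F(M)$ satisfies $r_f(\gamma')$ exactly because $M$ satisfies $r_f(\gamma)$, and $F(M)$ satisfies $r_f(\varepsilon)=\tilde f_{m-1}(\varepsilon)$ because $\tilde f_{m-1}(M_\beta M_\alpha)=0$.

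In the other direction I would define the ``unfolding'' functor $G\colon\rep_K(\bar Q,\bar\cali_f)\to\rep_K(Q,\cali_f)$ by $G(N)_a=G(N)_b=N_{v_{ab}}$, $G(N)_c=N_c$ otherwise, $G(N)_\alpha=\id$, $G(N)_\beta=N_\varepsilon$, and $G(N)_\gamma=N_{\gamma'}$ for all $\gamma\ne\alpha,\beta$. Then $M_\beta M_\alpha=M_\alpha M_\beta=N_\varepsilon$, so $r_f(\alpha)$ and $r_f(\beta)$ hold because $N$ satisfies $r_f(\varepsilon)$, while for $\gamma\ne\alpha,\beta$ the relation $r_f(\gamma)$ for $G(N)$ is literally the relation $r_f(\gamma')$ for $N$. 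Because every map inserted by $F$ becomes $\id$ when $M=G(N)$, one checks $F\circ G=\mathrm{Id}$ on the nose, while $G\circ F$ is naturally isomorphic to $\mathrm{Id}$ via the morphism that is the identity at every vertex $\ne b$ and is $M_\alpha\colon M_a\to M_b$ at $b$ — an isomorphism precisely because $M_\alpha$ is invertible. This yields the equivalence, hence the Morita equivalence of $A$ and $\bar A$. (Equivalently, one can argue ring-theoretically: $r_f(\alpha)$ together with the nonvanishing of $\tilde f_{m-1}$ at $0$ lets one write $e_b=\beta\,g(\alpha\beta)\,\alpha$ in $A$ for an explicit polynomial $g$, so $e:=1-e_b$ is a full idempotent, $A$ is Morita equivalent to $eAe$, and one identifies $eAe$ with $\bar A$ by the analogous assignment of generators, with $g(\alpha\beta)=(\alpha\beta)^{-1}$ playing the role of $M_\alpha^{-1}$.)

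The main obstacle is the case analysis verifying that $F(M)$ and $G(N)$ satisfy every relation of the respective evaluation ideals: one must run through each arrow $\gamma$ according to whether it meets $b$, handle self-dual loops (which by Remark \ref{rmk:sqrt} can only come from earlier contractions, hence carry finite odd weight) and the possibility of several dual pairs joining $a$ and $b$, and in the even-weight cases keep track of the extra leading factor in $f_{m(\gamma)-1}(\gamma',d(\gamma)')$. Each case is routine once the transport convention — insert $M_\alpha$ when leaving $b$, insert $M_\alpha^{-1}$ when entering $b$ — is fixed; getting that convention exactly right, so that the telescoping goes through and $F\circ G$ is the identity rather than merely isomorphic to it, is the delicate point.
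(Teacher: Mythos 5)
Your argument is correct, and it takes a genuinely different route from the paper's. You work entirely at the level of representation categories: the observation that the relations $r_f(\alpha),r_f(\beta)$ force $M_\alpha,M_\beta$ to be isomorphisms for every $M\in\rep_K(Q,\cali_f)$ (because $m$ is odd, so $f_{m-1}$ is even and the relations read $\tilde f_{m-1}(M_\beta M_\alpha)=0$, $\tilde f_{m-1}(M_\alpha M_\beta)=0$, with $\tilde f_{m-1}(0)\neq 0$) lets you transport all the structure at $b$ across $M_\alpha$, and your folding/unfolding functors are indeed quasi-inverse: the telescoping of the inserted $M_\alpha$, $M_\alpha^{-1}$ works for arrows meeting $b$ at one end, for loops at $b$ (conjugation preserves polynomial identities), and for additional dual pairs joining $a$ and $b$, and the comparison morphism that is the identity away from $b$ and $M_\alpha$ at $b$ is natural. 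The paper instead argues ring-theoretically in two steps: it first reroutes every arrow at $b$ other than $\alpha,\beta$ over to $a$, producing an intermediate quiver $Q'$ with $KQ'/\cali'_f\cong KQ/\cali_f$ (Proposition \ref{prop:rerouteIso}, built from $\sigma_1=g(\alpha\beta)\alpha$ and $\sigma_2=\beta$ satisfying $\sigma_1\sigma_2=e_a$, $\sigma_2\sigma_1=e_b$ --- exactly the inverse $g(\alpha\beta)=(\alpha\beta)^{-1}$ in your parenthetical); it then notes $e_a\Lambda\cong e_b\Lambda$, so $e=1-e_b$ is a full idempotent, $\Lambda$ is Morita equivalent to $e\Lambda e$, and it identifies $e\Lambda e$ with $K\bar Q/\bar\cali_f$ by a path-by-path verification that $e\cali'_fe\subseteq\Phi(\bar\cali_f)$. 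Your closing sketch is essentially that proof compressed. What your categorical version buys is transparency --- all the relation checks reduce to conjugation by an isomorphism --- at the small cost of having to remark (as you do) that the functors and the module/representation dictionary apply to arbitrary, not just finite dimensional, modules so that one really gets a Morita equivalence; the paper's version buys an explicit progenerator and an explicit presentation of the corner algebra, which is the more standard certificate of Morita equivalence and is reused in spirit elsewhere in the paper.
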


We postpone the proof of the theorem to \autoref{sec:MoritaEquivalence}. Before the proof, we discuss several
detailed examples of quiver contractions and some consequences of the theorem in the next subsection.

\subsection{Examples of Quiver Contractions}
\label{sec:contractionExamples}
Throughout this subsection, $G$ denotes the Coxeter diagram of a Coxeter system $(W,S)$ and $Q$ stands for the
double quiver of $G$. When drawing generalized double quivers, we label each pair of dual arrows
$\{\alpha,d(\alpha)\}$ with their common weight $m(\alpha)$ except when $m(\alpha)=3$, including for the case where
$\alpha$ is a self-dual loop of the form $\ve{ab}$ introduced by a contraction.  For convenience, we consider only
the polynomials $\{f_n:n\ge 2\}$ where
\begin{equation}
    f_{n}=
    \begin{cases}
        x^n-1& \text{if $n$ is even};\\
        x^n-x& \text{if $n$ is odd}. 
    \end{cases}
    \label{eq:PowerPoly}
\end{equation}
for all $n\ge 2$. Note that $\{f_n\}$ is a uniform family over $K$ since $K$ is algebraically closed (see
Remark \ref{rmk:fieldAssumptions}).

\begin{example}
\label{eg:simpleContractions}
Suppose that $G$ and $Q$ are as shown at the top of \autoref{fig:simpleContraction3}. The arrows $\alpha$ and
$\beta$ are dual to each other in $Q$ and have weight $3$, therefore they form a contractible pair.  The
contraction along $\{\alpha,\beta\}$ results in the quiver $\bar Q$ shown in the bottom right corner of the figure,
where $v=v_{ab}$ and $\ve{}=\ve{ab}$. 
\begin{figure}[h!]
    \centering
    \begin{tikzpicture}        
        \node (a) {$G:$};
        \node[main node] (1) [above right = 0.8cm and 0.3cm of a] {$a$};
\node[main node] (2) [right = 2cm of 1] {$b$}; 
\node[main node] (3) [below = 2cm of 2] {$c$};
\node[main node] (4) [left = 2cm of 3] {$d$};

\path[draw] 
(1) edge node[above] {} (2) 
(2) edge node[right] {} (3)
(3) edge node[below] {\tiny{5}} (4)
(4) edge node[left] {\tiny{4}} (1);

\node (l) [below right = 0.8cm and 1cm of 2] {$\longrightarrow$}; 
\node (00) [right = 0.6cm of l] {$Q:$};
\node[main node] (5) [above right = 0.75cm and 0.6 cm of 00] {$a$};
\node[main node] (6) [right = 2cm of 5] {$b$};
\node[main node] (7) [below = 2cm of 6] {$c$};
\node[main node] (8) [left = 2cm of 7] {$d$}; 
\node (x) [right = 0.1cm of 00] {\tiny{4}};
\node (y) [below right = 0.1cm and 0.85cm of 8] {\tiny{5}};

\path[draw] 
(5) edge [bend left = 10,-stealth] node [fill=white, anchor=center, pos=0.5,
inner sep=0.5pt] {\tiny{$\alpha$}} (6) 
(6) edge [bend left = 10,-stealth] node [fill=white, anchor=center, pos=0.5,
inner sep=0.5pt] {\tiny{$\beta$}} (5)
(6) edge [bend left=10,-stealth] node [fill=white, anchor=center, pos=0.5,
inner sep=0.5pt] {\tiny{$\gamma$}} (7) 
(7) edge [bend left= 10,-stealth] node [fill=white, anchor=center, pos=0.5,
inner sep=0.5pt] {\tiny{$\delta$}} (6) 
(7) edge [bend left = 10,-stealth] node [fill=white, anchor=center, pos=0.5,
inner sep=0.5pt] {\tiny{$\zeta$}} (8) 
(8) edge [bend left = 10,-stealth] node [fill=white, anchor=center, pos=0.5,
inner sep=0.5pt] {\tiny{$\eta$}} (7) 
(8) edge [bend left = 10,-stealth] node [fill=white, anchor=center, pos=0.5,
inner sep=0.5pt] {\tiny{$\kappa$}} (5) 
(5) edge [bend left = 10,-stealth] node [fill=white, anchor=center, pos=0.5,
inner sep=0.5pt] {\tiny{$\lambda$}} (8);

\node (x) [below right = 0.6cm and 0.8cm of 4] {$\downarrow$};
\node (yy) [below=0.1cm of y] {$\downarrow$};

\node (xa) [below = 4.5cm of a] {$\bar G:$};
\node[main node] (b1) [above right = 0.8cm and 1.5cm of xa] {$v$};
\node[main node] (b3) [below right = 2cm and 1cm of b1] {$c$};
\node[main node] (b4) [below left = 2cm and 1cm of b1] {$d$};

\path[draw] 
(b1) edge node[above] {} (b3) 
(b3) edge node[below] {\tiny{5}} (b4)
(b4) edge node[left] {\tiny{4}} (b1);

\node (y) [below =4.5cm of l] {$\longrightarrow$}; 
\node (000) [right = 0.6cm of y] {$\bar Q:$};
\node[main node] (b5) [above right = 0.75cm and 1.8cm of 000] {$v$};
\node[main node] (b7) [below right = 2cm and 1cm of b5] {$c$};
\node[main node] (b8) [below left = 2cm and 1cm of b5] {$d$}; 

\node (x) [below left = 0.6cm and 0.7cm of b5] {\tiny{4}};
\node (y) [below right = 0.1cm and 0.95cm of b8] {\tiny{5}};

\path[draw] 
(b5) edge [loop above,looseness=15, out=120, in =60, distance=0.8cm, -stealth]
node {\tiny{$\varepsilon$}} (b5)
(b5) edge [bend left=10,-stealth] node [fill=white, anchor=center, pos=0.5,
inner sep=0.5pt] {\tiny{$\gamma'$}} (b7) 
(b7) edge [bend left= 10,-stealth] node [fill=white, anchor=center, pos=0.5,
inner sep=0.5pt] {\tiny{$\delta'$}} (b5) 
(b7) edge [bend left = 10,-stealth] node [fill=white, anchor=center, pos=0.5,
inner sep=0.5pt] {\tiny{$\zeta$}} (b8) 
(b8) edge [bend left = 10,-stealth] node [fill=white, anchor=center, pos=0.5,
inner sep=0.5pt] {\tiny{$\eta$}} (b7) 
(b8) edge [bend left = 10,-stealth] node [fill=white, anchor=center, pos=0.5,
inner sep=0.5pt] {\tiny{$\kappa'$}} (b5) 
(b5) edge [bend left = 10,-stealth] node [fill=white, anchor=center, pos=0.5,
inner sep=0.5pt] {\tiny{$\lambda'$}} (b8); 
    \end{tikzpicture}
    \caption{}
    \label{fig:simpleContraction3}
\end{figure}
\end{example}

Let us examine the effect of the contraction. The three pairs of arrows $\{\gamma,\delta\},
\{\eta,\zeta\}$ and $\{\kappa,\lambda\}$ in $Q$ have weights $3, 5,4$ and
give rise to
the elements
\begin{IEEEeqnarray}{rl}
    r_1=\gamma\delta-e_b, &\quad r_2=\delta\gamma-e_c\\
    r_3=(\zeta\eta)^2-e_c, &\quad r_4=(\eta\zeta)^2-e_d,\\
    r_{5}=\kappa\lambda\kappa-\kappa,&\quad
    r_6=\lambda\kappa\lambda-\lambda
\end{IEEEeqnarray}
of the evaluation ideal $\cali_f$. The contraction reroutes the
arrows $\gamma, \delta, \kappa, \lambda$ since they are incident to $a$
or $b$, but the rerouting preserves weights by definition, so the rerouted arrows
give rise to ``duplicates'' of the relations $r_1,r_2,r_5,r_6$ in the ideal
$\bar \cali_f$, namely,
the relations
\[
    r_1'=\gamma'\delta'-e_v,\quad  r_2'=\delta'\gamma'-e_c,\quad
    r_5'=\kappa'\lambda'\kappa'-\kappa',\quad
    r_6'=\lambda'\kappa'\lambda'-\lambda'. 
\]
The arrows $\zeta$ and $\eta$ and their weights remain unchanged in the contraction since they
are not incident to $a$ or $b$. Consequently, they contribute the same
relations $r_3$ and $r_4$ to $\bar\cali_f$ just as they do to $\cali_f$.
Finally, the arrows $\alpha, \beta$ 
are replaced by a single loop at $v$. Since $m:=m(\alpha)=m(\beta)$ is odd, $\alpha$ and $\beta$ contribute the relations
\[
    r_f(\alpha)=(\alpha\beta)^k-e_a,\quad r_f(\beta)=
(\beta\alpha)^k-e_b 
\]
to $\cali_f$, and their replacement $\ve{}$ contributes a single relation 
\[
    r_f(\ve{})=r_f(d(\ve{}))=\ve{}^k-e_v
\]
to $\bar\cali_f$; here, we have $m=3$ and $k=(m-1)/2=1$.

By Theorem \ref{thm:MoritaEquivalence}, the algebra $A$ is Morita equivalent to the quotient $K \bar Q/\bar\cali_f$
where $ \bar\cali_f=\ip{r_1',r_2',r_3'=r_3, r_4'=r_4, r_5',r_6',r_f(\ve{})} $. The relation  $r_f(\ve{})=\ve{}-e_v$
implies that $\ve{}=e_v$ in the quotient, therefore the quotient is isomorphic to the quotient $K\hat
Q/\hat\cali_f$ where $\hat Q$ is obtained from $\bar Q$ by removing the loop $\ve{}$ and $\hat \cali_f=\ip{r_i:1\le
i\le 6}\se K\hat Q$. More generally, we define a quiver contraction with respect to a pair of arrows
$\{\alpha,\beta\}$ to be \emph{simple} if $m(\alpha)=m(\beta)=3$. By the above discussion, Theorem
\ref{thm:MoritaEquivalence} remains true for a simple contraction if we omit the loop $\ve{}$ in the construction
of $\bar Q$. We shall do so from now on.

For a double quiver $Q$ of a Coxeter diagram $G$, a contraction of $Q$ along a pair of arrows $\{\alpha: a\ra
b,\beta:b\ra a\}$ is simple if and only if the corresponding edge $a-b$ in $G$ is simple.  When this is the case,
we may define a simple contraction of $G$ by ``contracting'' the edge $a-b$ until $a,b$ are identified as a new
vertex $v:=v_{ab}$, thus effectively rerouting all edges incident to $a$ or $b$ to $v$. More precisely, we may
define a weighted graph $\bar G$ whose vertex set is $S\setminus\{a,b\}\sqcup\{v\}$ where $v\notin S$ and whose
edge set is $\{e':e\text{ is an edge in $G$ other than $a-b$}\}$, where $e'=e$ if $e$ is not incident to $a$ or $b$
and $e'$ is the edge $v-c$ whenever $e$ is of the form $a-c$ or $b-c$ for a vertex $c\in S\setminus\{a,b\}$; the
weight $m(e')$ of $e'$ is defined to be the same as that of $e$.  We call $\bar G$ the \emph{simple contraction of
$G$ along $a-b$}. For the above example, the contraction of $G$ along $a-b$ results in the graph $\bar G$ shown in the
lower left corner of \autoref{fig:simpleContraction3}.  Note that if $a,b$ share no neighbor in $G$, which is the
case for our example and must be the case if $G$ has no cycles, then the graph $\bar G$ can again be viewed as the
Coxeter diagram of a Coxeter system $(\bar W, \bar S)$.  In this case, the double quiver of $\bar G$ makes sense,
and it is clear that the double quiver of the simple contraction $\bar G$ of $G$ coincides with the simple
contraction $\bar Q$ of the double quiver $Q$ of $G$ (once we ignore the loop $\ve{}=\ve{ab}$). This phenomenon is
manifest in our example: the diagram in \autoref{fig:simpleContraction3} commutes once we ignore the loop $\ve{}$.

\begin{remark}
Maintain the assumptions and notation of the previous paragraph. In particular, assume that $a-b$ is a simple edge
in $G$ where $a,b$ have no common neighbor. Then Theorem \ref{thm:MoritaEquivalence} implies that the algebra
$A=K\otimes_\Z J_C$ associated to the Coxeter system $(W,S)$ is Morita equivalent to the algebra $\bar A:=K
\otimes_\Z \bar J_C$ associated to the system $(\bar W,\bar S)$. Note that we may state the equivalence purely in
terms of contractions of Coxeter diagrams, with no reference to quivers. Also note that the equivalence implies
that when we study the category mod-$A$, we may assume $G$ has no simple edges whenever $G$ is a tree. The reason
is that, since no two vertices can share a neighbor in a tree, we may repeatedly remove all simple edges in $G$ by
simple contractions. 
    \label{rmk:ignoreSimpleEdges}
\end{remark}

The following example illustrates the reduction allowed by Remark \ref{rmk:ignoreSimpleEdges}. After the example,
we record an application of the reduction for future use.

\begin{example}
\label{eg:treeContraction} 
Suppose that  $G$  is the tree shown on the left in \autoref{fig:treeContraction}. By Remark
\ref{rmk:ignoreSimpleEdges}, the algebra $A$ associated to $(W,S)$ is Morita equivalent to the algebra $A':=K
\otimes J'_C$ where $J'_C$ is the subregular $J$-ring of the Coxeter system whose diagram is the graph $G'$
obtained by contracting all simple edges in $G$; the graph $G'$ is shown on the right on
\autoref{fig:treeContraction}. 
\begin{figure}[h!] 
    \centering 
    \begin{tikzpicture}
\node (0) {$G:$}; 
\node[main node] (1) [below right = 0.8cm and 0.3cm of 0] {$a$};
\node[main node] (2) [right = 1cm of 1] {$b$}; 
\node[main node] (3) [right = 1cm of 2] {$c$};
\node[main node] (4) [above = 0.8cm of 3] {$f$};
\node[main node] (5) [above = 0.8cm of 4] {$g$};
\node[main node] (6) [right = 1cm of 3] {$d$};
\node[main node] (7) [right = 1cm of 6] {$e$};

\node (l) [above right = 0.8cm and 0.3cm of 7] {$\rightarrow$}; 
\node (00) [right = 0.3cm of l] {$G':$};
\node[main node] (a1) [below right = 0.5cm and 0.3cm of 00] {$x$};
\node[main node] (a2) [right = 1cm of a1] {$y$}; 
\node[main node] (a3) [right = 1cm of a2] {$z$};
\node[main node] (a4) [above = 1cm of a2] {$w$};

\path[draw] 
(1) edge node[above] {\tiny{5}} (2) 
(2) edge node[right] {} (3)
(3) edge node[right] {} (3)
(3) edge node[left] {} (4)
(4) edge node[left] {\tiny{7}} (5)
(3) edge node[above] {\tiny{4}} (6)
(6) edge node[above] {} (7);

\path[draw] 
(a1) edge node[above] {\tiny{5}} (a2) 
(a2) edge node[above] {\tiny{4}} (a3)
(a2) edge node[left] {\tiny{7}} (a4);

\end{tikzpicture} 
\caption{}
\label{fig:treeContraction}
\end{figure}
\end{example}

\begin{prop}
    \label{prop:contractToDihedral}
    Let $(W,S)$ be a Coxeter system whose Coxeter graph $G$ is a tree, has no edge with infinite weight, and has at
    most one heavy edge. Then the category mod-$A$ associated to $(W,S)$ is semisimple.
\end{prop}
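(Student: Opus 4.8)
The plan is to contract all simple edges of $G$, thereby reducing the claim either to the trivial case of a one-vertex diagram or to the dihedral case already settled in Theorem~\ref{thm:ArtinWedderburn}. First I would recall that, by Theorems~\ref{thm:JcIso} and~\ref{thm:IsoQuotients}, $A$ is isomorphic to $KQ/\cali_f$ for the double quiver $Q$ of $G$ and any fixed uniform family of polynomials $\{f_n\}$ over $K$. Since $G$ is a tree, no two vertices joined by an edge of $G$ share a common neighbour, so each of the finitely many simple edges of $G$ is contractible; moreover any graph obtained from $G$ by a sequence of simple contractions is again a tree, so the no-common-neighbour condition of Remark~\ref{rmk:ignoreSimpleEdges} persists at every step. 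Applying Theorem~\ref{thm:MoritaEquivalence} once for each simple edge (as in Example~\ref{eg:treeContraction}) then shows that $A$ is Morita equivalent to the algebra $\bar A = K\otimes_\Z \bar J_C$ attached to the Coxeter system whose Coxeter diagram $\bar G$ is obtained from $G$ by contracting all of its simple edges. Since Morita equivalence preserves semisimplicity of the category of finite dimensional modules, it suffices to show that mod-$\bar A$ is semisimple.

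Next I would pin down $\bar G$. A simple contraction of a tree again yields a tree, preserves the weight of every surviving edge, and deletes only the one simple edge being contracted; in particular it neither creates nor destroys heavy edges and introduces no infinite weights. Hence $\bar G$ is a tree, all of whose edges are heavy and of finite weight, and it has at most one edge because $G$ has at most one heavy edge by hypothesis. As a tree with $h$ edges has exactly $h+1$ vertices, $\bar G$ is either a single vertex (when $G$ has no heavy edge) or a pair of vertices joined by a single edge whose weight $m_0$ is finite and at least $4$ (when $G$ has exactly one heavy edge). In the first case the double quiver $\bar Q$ of $\bar G$ has one vertex and no arrows, so $\bar A\cong K\bar Q/\bar\cali_f = K$ by Theorems~\ref{thm:JcIso} and~\ref{thm:IsoQuotients}, and mod-$\bar A$ is obviously semisimple. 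In the second case $\bar G$ is the Coxeter diagram of the irreducible dihedral system $I_2(m_0)$; since $3\le m_0<\infty$, Theorem~\ref{thm:ArtinWedderburn} applies and shows that $\bar A\cong K\bar Q/\bar\cali_f$ is semisimple, being isomorphic to a product of copies of $M_{2\times 2}(K)$ together with two copies of $K$ when $m_0$ is even. In either case mod-$\bar A$ is semisimple, and hence so is mod-$A$.

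I do not expect a substantial obstacle: the statement is essentially a bookkeeping consequence of the contraction machinery (Theorem~\ref{thm:MoritaEquivalence} and Remark~\ref{rmk:ignoreSimpleEdges}) together with the dihedral computation of Theorem~\ref{thm:ArtinWedderburn}. The points that need a little care are that the tree hypothesis is precisely what lets every simple edge be contracted --- through the no-common-neighbour condition, which must be re-verified at each intermediate step but is automatic for trees --- that simple contractions preserve both the number of heavy edges and the absence of edges of infinite weight, and that the degenerate case in which $\bar G$ is a single vertex must be disposed of separately.
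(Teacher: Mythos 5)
Your proof is correct and follows essentially the same route as the paper's: reduce via simple contractions and the Morita equivalence of Theorem~\ref{thm:MoritaEquivalence} (as packaged in Remark~\ref{rmk:ignoreSimpleEdges}) to the dihedral case and invoke Theorem~\ref{thm:ArtinWedderburn}. The only cosmetic difference is that the paper contracts all edges \emph{except} one edge of maximal weight, so it always lands in the genuinely dihedral case, whereas you contract every simple edge and therefore must dispose separately of the degenerate one-vertex diagram when $G$ has no heavy edge --- which you do correctly.
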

\begin{proof}
Let $e$ be an edge of maximal weight in $G$. In other words, let $e$ be the unique heavy edge if $G$ has one, and
let $e$ be any simple edge in $G$ otherwise. By contracting all edges different from $e$ in $G$ if necessary, we
may assume that $e$ is the only edge in $G$ and hence $\abs{S}=2$. Theorem \ref{thm:ArtinWedderburn} then implies
that mod-$A$ is semisimple.  
\end{proof}

The next two examples involve iterated quiver contractions. 

\begin{example}
\label{eg:LaurentPolynomials}
Suppose that $G=C_n(m)$ is a cycle with at most one heavy edge, where $n$ is the number of vertices in $G$ and $m$
is the maximal edge weight. In other words, suppose that $G$ has $n$ vertices $v_1,v_2,\dots, v_n$ for some $n\ge
3$ and has exactly $n$ edges $v_1-v_2,v_2-v_3,\dots,v_{n-1}-v_n,v_n-v_1$, then assume, without loss of generality,
that $G$ has edge weights $m(v_2,v_3)=m(v_3,v_4)=\dots=m(v_{n-1},v_n)=m(v_n,v_1)=3$ and $m(v_1,v_2)=m\ge 3$.  Note
that when $m=3$, the Coxeter group arising from the Coxeter diagram $C_n(m)=C_n(3)$ is exactly the affine Weyl
group of type $\tilde A_{n-1}$ for all $n\ge 3$.

By repeated simple contractions of Coxeter diagrams, we may reduce $G$ to a triangle whose double quiver is the
quiver $Q^{(1)}$ shown on the left of \autoref{fig:Laurent}.  Contracting $Q^{(1)}$  along the arrows
$\alpha_3,\beta_3$ results in the quiver  $Q^{(2)}$ in  the same figure, where $a=v_{v_1v_3}$ and the loop $\ve{v}$
is omitted as usual.  Furthermore, since $m(\alpha_2')=m(\alpha_2)=3$, we may further contract $Q^{(2)}$ along the
arrows $\alpha_2',\beta_2'$ to obtain the quiver $Q^{(3)}$, where $b=v_{av_2}$, the loop $\ve{b}$ is omitted, and
the arrows $\alpha_1'',\beta_1''$ are dual to each other and have weight $m$.  Note that as the last contraction
demonstrates, given a contractible pair of arrows $\alpha:a\ra b,\beta:b\ra a$ in a generalized double quiver $Q$,
every pair of dual arrows of the form $\gamma:a\ra b, \delta: b\ra a$ where $\gamma\neq \alpha$ is rerouted to a
pair of distinct loops $\gamma',\delta'$ at $v_{ab}$ which are dual to each other and have the same weight as
$\gamma$ and $\delta$.

\begin{figure}[h!]
    \centering
    \begin{tikzpicture}        
        \node (g) {$Q^{(1)}:$};
        \node (1) [below right = 0.6cm and 0.05cm of g] {$v_1$};
\node (2) [right = 2cm of 1] {$v_2$};
\node (3) [above right = 1.8cm and 0.65cm of 1] {$v_3$};
\node (l) [above right = 0.6cm and 0.05cm of 2] {$\rightarrow$};
\node (a0) [right=0.05cm of l] {$Q^{(2)}:$};
\node (a1) [right =0.1cm of a0] {$a$};
\node (a2) [right = 2cm of a1] {$v_2$};
\node (ll) [right = 0.05cm of a2] {$\rightarrow$};
\node (aa0) [right=0.05cm of ll] {$Q^{(3)}:$};
\node (aa1) [right =0.05cm of aa0] {$b$};

\node (m1) [below right = 0.1cm and 0.7cm of 1] {\tiny{$m$}};

\node (m2) [below right = 0.8cm and 0.75cm of a1] {\tiny{$m$}};

\node (m3) [above right = 0.3cm and 0.05cm of aa1] {\tiny{$m$}};
\node (m4) [below right = 0.3cm and 0.05cm of aa1] {\tiny{$m$}};

\path[draw] 
(1) edge [bend left = 15,-stealth] node [fill=white, anchor=center, pos=0.5,
inner sep=0.5pt] {\tiny{$\alpha_1$}} (2) 
(2) edge [bend left = 15,-stealth] node [fill=white, anchor=center, pos=0.5,
inner sep=0.5pt] {\tiny{$\beta_1$}} (1)
(2) edge [bend left = 15,-stealth] node [fill=white, anchor=center, pos=0.5,
inner sep=0.5pt] {\tiny{$\alpha_2$}} (3) 
(3) edge [bend left = 15,-stealth] node [fill=white, anchor=center, pos=0.5,
inner sep=0.5pt] {\tiny{$\beta_2$}} (2)
(3) edge [bend left = 15,-stealth] node [fill=white, anchor=center, pos=0.5,
inner sep=0.5pt] {\tiny{$\alpha_3$}} (1) 
(1) edge [bend left = 15,-stealth] node [fill=white, anchor=center, pos=0.5,
inner sep=0.5pt] {\tiny{$\beta_3$}} (3);

\path[draw] 
(a1) edge [bend left = 75,-stealth] node [fill=white, anchor=center, pos=0.5,
inner sep=0.5pt] {\tiny{$\beta_2'$}} (a2) 
(a2) edge [bend right = 30,-stealth] node [fill=white, anchor=center, pos=0.5,
inner sep=0.5pt] {\tiny{$\alpha_2'$}} (a1)
(a1) edge [bend right = 30,-stealth] node [fill=white, anchor=center, pos=0.5,
inner sep=0.5pt] {\tiny{$\alpha_1'$}} (a2) 
(a2) edge [bend left = 70,-stealth] node [fill=white, anchor=center, pos=0.5,
inner sep=0.5pt] {\tiny{$\beta_1'$}} (a1);

\path[draw] 
(aa1) edge [loop,looseness=15, out=105, in =75, distance=1.2cm,
-stealth] node [above] {\tiny{$\alpha_1''$}} (aa1)
(aa1) edge [loop,looseness=15, out=-75, in =-105, distance=1.2cm,
-stealth] node [below] {\tiny{$\beta_1''$}} (aa1);
    \end{tikzpicture}
    \caption{}
    \label{fig:Laurent}
\end{figure}

By Theorem \ref{thm:MoritaEquivalence}, the algebra $A$ associated to the Coxeter system whose Coxeter diagram is
the cycle $G$ is Morita equivalent to the algebra $\bar A:=K Q^{(3)}/\cali^{(3)}_f$ where $\cali^{(3)}_f$ is the
evaluation ideal of $\{f_n\}$ in $KQ^{(3)}$. The path algebra $K Q^{(3)}$ is isomorphic to the free unital
associative algebra $K\ip{x,y}$ on two variables via the identification  $e_b\mapsto 1, \alpha_1''\mapsto
x,\beta_1''\mapsto y$, and $\cali^{(3)}_f$ is given by 
\[
    \cali^{(3)}_f:=\ip{(\alpha_1''\beta_1'')^k-e_b,(\beta_1''\alpha_1'')^k-e_b}
\]
where $k=(m-1)/2$, so $\bar A$ is isomorphic to the algebra
\begin{equation}
    \label{eq:Tk}
    T_k:=K\ip{x,y}/\ip{(xy)^k=(yx)^k=1}.
\end{equation}
In particular, if $m=3$, then $k=1$ and $A$ is isomorphic to the Laurent polynomial algebra $K[t,t\inverse]$ via
the identification $e_b\mapsto 1, \alpha_1''\mapsto t, \beta_1''\mapsto t\inverse$. To summarize, we have just
proved the following result.

\begin{prop}
    \label{prop:Tk}
    Let $n\ge 3$. Let $m\ge 3$ be an odd integer, let $k=(m-1)/2$, and let $(W,S)$ be the Coxeter system with
    Coxeter diagram $G=C_n(m)$. Then the algebra $A$ associated to $(W,S)$ is Morita equivalent to the algebra
    $T_k$ defined by Equation \eqref{eq:Tk}. In particular, the Morita equivalence class of $A$ does not depend on
    the value of $n$, and if $m=3$, i.e., if $(W,S)$ is of type $\tilde A_{n-1}$, then $A$ is Morita equivalent to
    $K[t,t\inverse]$.
\end{prop}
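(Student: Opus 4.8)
The plan is to assemble the reductions carried out in Example \ref{eg:LaurentPolynomials} into a proof. By Theorems \ref{thm:JcIso} and \ref{thm:IsoQuotients} we have $A\cong KQ/\cali_f$, where $Q$ is the double quiver of $G=C_n(m)$ and $\{f_n\}$ is the uniform family defined by \eqref{eq:PowerPoly}, so it suffices to prove that $KQ/\cali_f$ is Morita equivalent to $T_k$. First I would reduce the cycle $G$ to the triangle $C_3(m)$ by repeatedly applying simple contractions of Coxeter diagrams. This is legitimate because in $C_p(m)$ with $p\ge 4$ the two endpoints of any simple edge have no common neighbour, so Remark \ref{rmk:ignoreSimpleEdges} applies and each such contraction preserves the Morita class of the associated algebra; since the heavy edge is never simple when $m>3$, iterating produces the triangle $C_3(m)$, which still has exactly one edge of weight $m$ (and nothing needs to be done when $n=3$). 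Let $Q^{(1)}$ be the double quiver of this triangle, with vertices $v_1,v_2,v_3$, a weight-$m$ dual pair $\alpha_1,\beta_1$ on the heavy edge, and weight-$3$ dual pairs $\alpha_2,\beta_2$ and $\alpha_3,\beta_3$ on the other two edges.

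Next I would collapse $Q^{(1)}$ to a one-vertex quiver by two successive simple quiver contractions, invoking Theorem \ref{thm:MoritaEquivalence} in the simplified form for simple contractions in which the loop introduced at the new vertex is discarded, as explained after Example \ref{eg:simpleContractions}. Contracting $Q^{(1)}$ along $\{\alpha_3,\beta_3\}$ identifies $v_1$ and $v_3$ and yields $Q^{(2)}$, a quiver on two vertices carrying a rerouted weight-$m$ dual pair $\alpha_1',\beta_1'$ and a rerouted weight-$3$ dual pair $\alpha_2',\beta_2'$. Contracting $Q^{(2)}$ along $\{\alpha_2',\beta_2'\}$ identifies the two remaining vertices into a single vertex $b$; since $\alpha_1',\beta_1'$ run between exactly the two vertices being merged, they are rerouted to a pair of \emph{distinct} dual loops $\alpha_1'',\beta_1''$ at $b$, still of weight $m$, and the resulting quiver $Q^{(3)}$ has one vertex with these two loops. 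Each contraction preserves Morita equivalence, so $KQ/\cali_f$ is Morita equivalent to $KQ^{(3)}/\cali_f^{(3)}$.

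It then remains to identify $KQ^{(3)}/\cali_f^{(3)}$ with $T_k$. The path algebra $KQ^{(3)}$ is the free unital algebra $K\ip{x,y}$ via $e_b\mapsto 1$, $\alpha_1''\mapsto x$, $\beta_1''\mapsto y$. Since $\alpha_1''$ has finite odd weight $m$ and $d(\alpha_1'')=\beta_1''\neq\alpha_1''$, the generators of $\cali_f^{(3)}$ are $r_f(\alpha_1'')=f_{m-1}(\alpha_1'',\beta_1'')$ and $r_f(\beta_1'')=f_{m-1}(\beta_1'',\alpha_1'')$; because $m-1$ is even, $f_{m-1}=x^{m-1}-1$ and $\tilde f_{m-1}=x^{k}-1$ with $k=(m-1)/2$, so \eqref{eq:TildeEval} turns these into $(xy)^k-1$ and $(yx)^k-1$. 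Hence $KQ^{(3)}/\cali_f^{(3)}\cong K\ip{x,y}/\ip{(xy)^k=(yx)^k=1}=T_k$, which proves $A\morita T_k$; the right-hand side plainly does not involve $n$. When $m=3$ we have $k=1$, and the relations $xy=yx=1$ force $y=x\inverse$, so $T_1\cong K[t,t\inverse]$ under $x\mapsto t$; together with the identification of $C_n(3)$ as the Coxeter diagram of type $\tilde A_{n-1}$ noted in Example \ref{eg:LaurentPolynomials}, this yields the last assertion.

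The computations above are routine once the combinatorics of the two contractions is pinned down, so I expect the only delicate point to be the bookkeeping in the second step: one must check that the weight-$m$ dual pair, having survived the first contraction as an ordinary pair of arrows between the two remaining vertices, becomes a pair of \emph{distinct} dual loops — not a single self-dual loop — after the second contraction, so that the third clause of Definition \ref{def:generalizedEvaluationIdeals} is irrelevant and the evaluation relations are exactly $(xy)^k=(yx)^k=1$. Everything else follows directly from the cited results.
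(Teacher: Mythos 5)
Your proposal is correct and follows essentially the same route as the paper, whose proof of Proposition \ref{prop:Tk} is precisely the chain of reductions in Example \ref{eg:LaurentPolynomials}: simple contractions down to the triangle, two quiver contractions to the one-vertex quiver $Q^{(3)}$ with a pair of distinct dual loops of weight $m$, and the identification of $KQ^{(3)}/\cali_f^{(3)}$ with $T_k$ via $(xy)^k=(yx)^k=1$. The "delicate point" you flag — that the surviving weight-$m$ pair becomes two distinct dual loops rather than one self-dual loop — is exactly the observation the paper makes after constructing $Q^{(3)}$.
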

\end{example}

\begin{example}
\label{eg:freeProduct}
Apart from the algebras of the form $T_k$ from Equation \eqref{eq:Tk}, 
group algebras of free products of finite cyclic groups can also be realized as the
algebras of the form $A$ associated to Coxeter systems up to Morita equivalence. To see
this, let $A_k$ be the $K$-algebra given by the presentation 
\[
A_k: =\ip{x: x^k=1}
\]
for each integer $k>1$, and let 
\[
A_{\mathbf{k}} = \ip{x_j: 1\le j\le n, x_j^{k_j}=1}
\]
for each tuple $\mathbf{k}=(k_1,\dots,k_n)$ where $n\ge 1$ and $k_i\in \Z_{\ge 1}$ for all $1\le i\le n$.  Then
$A_k$ is isomorphic to the group algebra of the cyclic group $C_k$ of order $k$, and $A_{\mathbf{k}}$ is isomorphic
to the group algebra of the free product $C_{\mathbf{k}}:=C_{k_1}*\dots*C_{k_n}$ of $C_{k_1},\dots, C_{k_n}$. For
each tuple $\mathbf{k}=(k_1,\dots,k_n)$, let $(W,S)$ be the Coxeter system where $S= \{0,1,2,\dots, n\}$,
$m_j:=m(0,j)=2k_j+1$ for each $1\le j\le n$, and $m(i,j)=2$ for all $1\le i<j\le n$. Let $Q$ be the double quiver
of $(W,S)$, and for each $1\le j\le n$ denote the arrows $0\ra j$ and  $j\ra 0$ in $Q$ by $\alpha_j$ and $\beta_j$,
respectively. It is easy to see that by starting from the quiver $Q$ and performing successive contractions, first
along $\{\alpha_1,\beta_1\}$, then along (the reroutes of) $\{\alpha_2,\beta_2\}$, then along (the reroutes of)
$\{\alpha_3,\beta_3\}$, and so on, we can transform $Q$ to a generalized double quiver $\bar Q$ with a single
vertex $v$ and $n$ self-dual loops $\ve{1},\dots, \ve{n}$ at $v$ of weight $m_1,m_2,\dots,m_n$, respectively.
\autoref{fig:starQuiver} demonstrates the construction of $\bar Q$ from the Coxeter diagram $G$ of $(W,S)$ when
$n=4$.

By Theorem \ref{thm:MoritaEquivalence}, the algebra $A$ is Morita equivalent to the quotient $K \bar Q/\bar\cali_f$
where $\bar\cali_f$ is the evaluation ideal of $\{f_n\}$ in $K \bar Q$. The following proposition is now immediate.

\begin{prop}
    Let $(W,S)$ be as described in the above paragraph. Then the algebra $A$ associated to $(W,S)$ is  Morita
    equivalent to the algebra $A_\mathbf{k}$.
\end{prop}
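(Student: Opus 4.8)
The plan is to make precise the construction already described for this $(W,S)$: realize $A$ as a quotient of a path algebra, collapse that quiver to a single vertex by iterated contractions, and then read off the quotient. Fix the uniform family $\{f_n\}$ of Equation \eqref{eq:PowerPoly}. By Theorems \ref{thm:JcIso} and \ref{thm:IsoQuotients} we have $A\cong KQ/\cali_f$, where $Q$ is the double quiver of $(W,S)$: its vertex set is $\{0,1,\dots,n\}$ and for each $j$ it carries a dual pair $\alpha_j\colon 0\to j$, $\beta_j\colon j\to 0$ of weight $m_j=2k_j+1$, with no other arrows, since $m(i,j)=2$ for $1\le i<j\le n$. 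It therefore suffices to exhibit a sequence of contractions turning $(Q,d,m)$ into a generalized double quiver $\bar Q$ and to identify $K\bar Q/\bar\cali_f$ with $A_{\mathbf{k}}$: Theorem \ref{thm:MoritaEquivalence}, applied once per contraction, then yields $A\morita K\bar Q/\bar\cali_f\cong A_{\mathbf{k}}$.

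For the contractions I would argue inductively. Each $m_j$ is odd and at least $3$, so $\{\alpha_1,\beta_1\}$ is contractible (Definition \ref{def:quiverContraction}); contracting along it merges $0$ and $1$, replaces $\alpha_1,\beta_1$ by a self-dual loop of weight $m_1$, and reroutes each $\alpha_j,\beta_j$ $(j\ge 2)$ to a dual pair of the same weight $m_j$ at the new vertex. Since rerouting preserves weights and the duality map, the rerouted pair coming from $\{\alpha_2,\beta_2\}$ is again contractible; contracting along it merges its two endpoints, produces a self-dual loop of weight $m_2$, reroutes the loop of weight $m_1$ — which by Remark \ref{rmk:sqrt} remains a self-dual loop — and reroutes the pairs for $j\ge 3$. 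Iterating $n$ times produces a generalized double quiver $\bar Q$ with a single vertex $v$ and $n$ self-dual loops $\ve{1},\dots,\ve{n}$ of weights $m_1,\dots,m_n$, as pictured in \autoref{fig:starQuiver}.

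Finally I would identify $K\bar Q/\bar\cali_f$. For a one-vertex quiver with loops $\ve{1},\dots,\ve{n}$, the path algebra $K\bar Q$ is the free unital associative algebra on these loops, so $e_v\mapsto 1$, $\ve{j}\mapsto x_j$ gives an isomorphism $K\bar Q\cong K\ip{x_1,\dots,x_n}$. For the self-dual loop $\ve{j}$ of odd finite weight $m_j=2k_j+1$, Definition \ref{def:generalizedEvaluationIdeals} gives the generator $r_f(\ve{j})=\tilde f_{m_j-1}(\ve{j})=\tilde f_{2k_j}(\ve{j})$; since $f_{2k_j}=x^{2k_j}-1$ we get $\tilde f_{2k_j}=x^{k_j}-1$, hence $r_f(\ve{j})=\ve{j}^{k_j}-e_v\mapsto x_j^{k_j}-1$. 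Thus $\bar\cali_f$ corresponds to the two-sided ideal $\ip{x_j^{k_j}-1:1\le j\le n}$, and $K\bar Q/\bar\cali_f\cong K\ip{x_1,\dots,x_n}/\ip{x_j^{k_j}-1:1\le j\le n}=A_{\mathbf{k}}$, which completes the proof.

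I do not expect a genuine obstacle here, as the substantive work is in Theorem \ref{thm:MoritaEquivalence}. The only points needing care are bookkeeping ones: verifying that contractibility is inherited by the rerouted pairs at each stage (it is, since the weights remain odd and $\ge 3$ and the two collapsed vertices stay distinct throughout), and checking that the loops created by earlier contractions survive later ones as self-dual loops rather than as ordinary dual pairs (this is exactly Remark \ref{rmk:sqrt}) — together with the one-line evaluation $\tilde f_{2k_j}=x^{k_j}-1$ that pins down the relation at each loop.
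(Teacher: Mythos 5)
Your proposal is correct and follows essentially the same route as the paper: iterated contractions along the dual pairs $\{\alpha_j,\beta_j\}$ reduce $Q$ to the one-vertex quiver $\bar Q$ with self-dual loops $\ve{1},\dots,\ve{n}$, Theorem \ref{thm:MoritaEquivalence} gives the Morita equivalence, and the identification $K\bar Q/\bar\cali_f\cong A_{\mathbf{k}}$ comes from the free-algebra presentation together with $r_f(\ve{j})=\tilde f_{2k_j}(\ve{j})=\ve{j}^{k_j}-e_v$. The paper merely states the contraction sequence and the relation computation more tersely in the paragraph preceding the proposition; your additional bookkeeping (contractibility of rerouted pairs, survival of earlier loops as self-dual loops) is accurate and consistent with Definition \ref{def:quiverContraction} and Remark \ref{rmk:sqrt}.
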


\begin{proof}
    Note that $K\bar Q$ is the free unital associative algebra generated by the loops $\ve{j}$ where $1\le j\le n$
and that  $\bar\cali_f=\ip{\ve{j}^{k_j}-e_v:1\le j\le n}$.  It follows that we may induce an algebra isomorphism
$\varphi: A_{\mathbf{k}}\ra K \bar Q/\bar\cali_f$ from the assignment $\varphi(x_j)=\ve{j}$ for all $1\le j\le n$.
\end{proof}

\begin{figure}[h!]
    \centering
    \begin{tikzpicture}        
\node (g) {$G:$};
\node (1) [above right = 0.8cm and 0.1cm of g] {$1$};
\node (0) [below right = 0.8cm and 0.8cm of 1] {$0$};
\node (2) [right = 2cm of 1] {$2$};
\node (3) [below = 2cm of 2] {$3$};
\node (4) [left = 2cm of 3] {$4$};
\node (l) [below right = 0.8cm and 0.1cm of 2] {$\rightarrow$};
\node (a00) [right=0.1cm of l] {$Q:$};
\node[main node] (a5) [above right = 0.8cm and 0.1 cm of a00] {$1$};
\node[main node] (a6) [right = 2cm of a5] {$2$};
\node[main node] (a7) [below = 2cm of a6] {$3$};
\node[main node] (a8) [left = 2cm of a7] {$4$}; 
\node[main node] (a0) [below right =0.9cm and 0.9cm of a5] {$0$};
\node (bl) [below right = 0.8cm and 0.1cm of a6] {$\rightarrow$};
\node (aa00) [right=0.1cm of bl] {$\bar Q:$};
\node (b0) [right = 1cm of aa00] {$v$};
\node (m1) [below=0.4cm of a5] {\tiny{$m_1$}};
\node (m2) [above=0.4cm of a8] {\tiny{$m_4$}};
\node (m3) [below=0.4cm of a6] {\tiny{$m_2$}};
\node (m4) [above=0.4cm of a7] {\tiny{$m_3$}};
\node (mm1) [above left = 0.005cm and 0.35cm of b0] {\tiny{$m_1$}};
\node (mm2) [above right = 0.005cm and 0.35cm of b0] {\tiny{$m_2$}};
\node (mm3) [below left = 0.005cm and 0.35cm of b0] {\tiny{$m_3$}};
\node (mm4) [below right = 0.005cm and 0.35cm of b0] {\tiny{$m_4$}};

\path[draw]
(1) edge node [left] {\tiny{$m_1$}} (0)
(2) edge node [right] {\tiny{$m_2$}} (0)
(3) edge node [right] {\tiny{$m_3$}} (0)
(4) edge node [left] {\tiny{$m_4$}} (0);

\path[draw] 
(a0) edge [bend left = 20,-stealth] node [fill=white, anchor=center, pos=0.5,
inner sep=0.5pt] {\tiny{$\alpha_1$}} (a5) 
(a5) edge [bend left = 20,-stealth] node [fill=white, anchor=center, pos=0.5,
inner sep=0.5pt] {\tiny{$\beta_1$}} (a0)
(a0) edge [bend left = 20,-stealth] node [fill=white, anchor=center, pos=0.5,
inner sep=0.5pt] {\tiny{$\alpha_2$}} (a6) 
(a6) edge [bend left = 20,-stealth] node [fill=white, anchor=center, pos=0.5,
inner sep=0.5pt] {\tiny{$\beta_2$}} (a0)
(a0) edge [bend left = 20,-stealth] node [fill=white, anchor=center, pos=0.5,
inner sep=0.5pt] {\tiny{$\alpha_3$}} (a7) 
(a7) edge [bend left = 20,-stealth] node [fill=white, anchor=center, pos=0.5,
inner sep=0.5pt] {\tiny{$\beta_3$}} (a0)
(a0) edge [bend left = 20,-stealth] node [fill=white, anchor=center, pos=0.5,
inner sep=0.5pt] {\tiny{$\alpha_4$}} (a8) 
(a8) edge [bend left = 20,-stealth] node [fill=white, anchor=center, pos=0.5,
inner sep=0.5pt] {\tiny{$\beta_4$}} (a0);

\path[draw] 
(b0) edge [loop,looseness=15, out=150, in =120, distance=1.2cm,
-stealth] node [above left] {\tiny{$\ve{1}$}} (b0)
(b0) edge [loop,looseness=15, out=60, in =30, distance=1.2cm,
-stealth] node [above right] {\tiny{$\ve{2}$}} (b0)
(b0) edge [loop,looseness=15, out=-30, in =-60, distance=1.2cm,
-stealth] node [below right] {\tiny{$\ve{3}$}} (b0)
(b0) edge [loop,looseness=15, out=-120, in =-150, distance=1.2cm,
-stealth] node [below left] {\tiny{$\ve{4}$}} (b0);
    \end{tikzpicture}
    \caption{}
    \label{fig:starQuiver}
\end{figure}
\end{example}

\begin{remark}
\label{rmk:operators}
A pleasant feature of iterated quiver contractions is that for every sequence of the form
\eqref{eq:repeatedContractions}, since we reduce the number of vertices in the quiver with every contraction,
representations in the category $\rep_K(Q^{(n)},\cali^{(n)}_f)$ are often relatively easy to describe.  For instance, in
Example \ref{eg:freeProduct} a representation in $\rep_K(Q^{(n)},\cali^{(n)}_f)$ is simply the data of a vector
space $M_v$ and endormophisms $\phi_1, \dots\phi_n$ of $M_v$ where $\phi_j^{k_j}=\id$ for all $1\le j\le n$.  In
Example \ref{eg:LaurentPolynomials}, to define a representation in $\rep_K(Q^{(3)},\cali^{(3)}_f)$ it suffices to
specify a space $M_{b}$ and two endormorphisms $M_{\alpha_1''}, M_{\beta_1''}$ of $M_{v_1}$ satisfying the
relations $f_{m-1}(\alpha_1'',\beta_1'')$ and $f_{m-1}(\beta_1'',\alpha_1'')$.  Finally, for the Coxeter system
$(W,S)$ from Example \ref{eg:treeContraction}, by repeated contractions along arrows corresponding to the edges of
weight 5 and 7 in $G'$, we may transform the double quiver of $G'$ to a quiver $\bar Q$ of the form shown in
\autoref{fig:57}, where the loop $\ve{1}$ is self-dual and has weight 7, the loop $\ve{2}$ is self-dual and has
weight 5, and $\alpha,\beta$ are dual to each other and have weight 4.  It follows that mod-$A$ is equivalent to
the category $\rep_K(\bar Q,\bar\cali_f)$ where
\[
    \bar\cali_f=\ip{\ve{1}^2-e_v, \ve{2}^3-e_v,
    \alpha\beta\alpha-\alpha,\beta\alpha\beta-\beta}.
\]
A representation in the latter category is then simply the data of two vector spaces $M_v, M_z$, two operators
$M_{\ve{1}}, M_{\ve{2}}$ on $M_v$ and two maps $M_\alpha:M_v\ra M_z, M_\beta:M_z\ra M_v$ which satisfy the four
relations in the above equation.  In all these examples, the representations are much easier to describe than those
in the category $\rep_K(Q,\cali_f)$ attached to the original double quiver $Q$.  Endomorphism like $\phi_j$,
$M_\alpha M_\beta, \pbpa$ and $M_{\ve{1}}, M_{\ve{2}}$ will be key tools in our study of mod-$A$; we will elaborate
on their use in \autoref{sec:repAnalysis}.

\begin{figure}[h!]
    \centering
    \begin{tikzpicture}        
\node (g) {$\bar Q:$};
\node (1) [below right = 0.25cm and 2cm of g] {$v$};
\node (2) [right = 1.5cm of 1] {$z$};
\node (7) [above right = 0.25cm and 0.001cm of 1] {\tiny{7}};
\node (7) [above left = 0.001cm and 0.4cm of 1] {\tiny{5}};
\node (7) [above right = 0.001cm and 0.55cm of 1] {\tiny{4}};
\path[draw] 
(1) edge [bend left = 15,-stealth] node [fill=white, anchor=center, pos=0.5,
inner sep=0.5pt] {\tiny{$\alpha$}} (2) 
(2) edge [bend left = 15,-stealth] node [fill=white, anchor=center, pos=0.5,
inner sep=0.5pt] {\tiny{$\beta$}} (1)
(1) edge [loop,looseness=15, out=195, in =165, distance=1.5cm,
-stealth] node [left] {\tiny{$\ve{1}$}} (1)
(1) edge [loop,looseness=15, out=105, in =75, distance=1.5cm,
-stealth] node [above] {\tiny{$\ve{2}$}} (1);
    \end{tikzpicture}
    \caption{}
    \label{fig:57}
\end{figure}
\end{remark}

\subsection{Proof of Theorem \ref{thm:MoritaEquivalence}}
\label{sec:MoritaEquivalence}

Let $(Q,d,m)$ be a generalized double quiver and let $(\bar Q,\bar d, \bar m)$ be its contraction along a
contractible pair of arrows $\{\alpha:a\ra b,\beta: b\ra a\}$. We prove Theorem \ref{thm:MoritaEquivalence} in this
subsection. To begin, we introduce an intermediate generalized double quiver $(Q',d',m')$ where
\begin{enumerate}
    \item $Q'_0=Q_0$, and $Q_1'$ is defined as follows: write                
        \[
            c'=
\begin{cases}
    a & \text{if } c=b;\\
    c & \text{otherwise}\\
\end{cases}
        \]
        for all $c\in Q_0$, 
        let 
        \[
\gamma'=
\begin{cases} 
    \gamma & \text{if } \gamma\in \{\alpha,\beta\};\\
    (c'\ra d')& \text{otherwise, if $\gamma$ is of the form $c\ra d$}\\
\end{cases}
        \]
        for each arrow $\gamma\in Q_1$, 
        then set 
        \[
            Q_1'=\{\gamma':\gamma\in Q_1\}.
        \]
    \item $d'$ is defined by $d'(\gamma')=[d(\gamma)]'$
        for all $\gamma\in Q'_1$;
    \item $m'$ is defined by $m'(\gamma')=m(\gamma)$ 
        for all $\gamma\in Q_1$.
\end{enumerate}
Intuitively, we consider $Q'$ an intermediate rerouted version of $Q$ similar to $\bar Q$: in $\bar Q$, we
identify $a,b$ with $v=v_{ab}$ and ``transfer'' all data relevant to $a$ or $b$ in $Q$ to $v$ by rerouting all
arrows in $Q$ incident to $a$ or $b$ to $v$; in $Q'$, however, we transfer almost all data relevant to $b$ to $a$
except for the arrows $\alpha$ and $\beta$. We may thus think of $a\in Q'_0$ as a partial copy of $a,b\in Q_0$ and
$v\in \bar Q_0$ as a complete copy of $a,b$. To obtain $\bar Q$ from $Q'$, it remains to rename $a$ as $v_{ab}$,
rename each arrow $\gamma\in Q'_1\setminus\{\alpha,\beta\}$ incident to $a$ as $\gamma'$, replace $\alpha',\beta'$
with $\ve{{ab}}$, and remove $b$. For the quiver $Q$ from Example \ref{eg:simpleContractions}, this procedure,
along with the construction of $Q'$ from $Q$, is illustrated in \autoref{fig:intermediate},  where $v=v_{ab}$ and
$\ve{}=\ve{ab}$.

\begin{figure}[h!]
    \centering
    \begin{tikzpicture}        
\node (00) {$Q:$};
\node[main node] (a5) [above right = 0.65cm and 0.5 cm of 00] {$a$};
\node[main node] (a8) [below = 1.8cm of a5] {$d$}; 
\node[main node] (a7) [right = 1.8cm of a8] {$c$};
\node[main node] (a6) [above = 1.8cm of a7] {$b$};
\node (xx) [below left = 0.75cm and 0.15cm of a5] {\tiny{4}};
\node (yy) [below right = 0.1cm and 0.75cm of a8] {\tiny{5}};

\path[draw] 
(a6) edge [bend left = 10,-stealth] node [fill=white, anchor=center, pos=0.5,
inner sep=0.5pt] {\tiny{$\beta$}} (a5) 
(a5) edge [bend left = 10,-stealth] node [fill=white, anchor=center, pos=0.5,
inner sep=0.5pt] {\tiny{$\alpha$}} (a6) 
(a6) edge [bend left=10,-stealth] node [fill=white, anchor=center, pos=0.5,
inner sep=0.5pt] {\tiny{$\gamma$}} (a7) 
(a7) edge [bend left= 10,-stealth] node [fill=white, anchor=center, pos=0.5,
inner sep=0.5pt] {\tiny{$\delta$}} (a6) 
(a7) edge [bend left = 10,-stealth] node [fill=white, anchor=center, pos=0.5,
inner sep=0.5pt] {\tiny{$\zeta$}} (a8) 
(a8) edge [bend left = 10,-stealth] node [fill=white, anchor=center, pos=0.5,
inner sep=0.5pt] {\tiny{$\eta$}} (a7) 
(a8) edge [bend left = 10,-stealth] node [fill=white, anchor=center, pos=0.5,
inner sep=0.5pt] {\tiny{$\kappa$}} (a5) 
(a5) edge [bend left = 10,-stealth] node [fill=white, anchor=center, pos=0.5,
inner sep=0.5pt] {\tiny{$\lambda$}} (a8);

\node (l) [below right = 0.75cm and 0.1cm of a6] {$\rightarrow$};
\node (a00) [right=0.1cm of l] {$Q':$};
\node[main node] (b5) [above right = 0.65cm and 0.5 cm of a00] {$a$};
\node[main node] (b8) [below = 1.8cm of b5] {$d$}; 
\node[main node] (b7) [right = 1.8cm of b8] {$c$};
\node[main node] (b6) [above = 1.8cm of b7] {$b$};
\node (x) [below left = 0.75cm and 0.15cm of b5] {\tiny{4}};
\node (y) [below right = 0.1cm and 0.75cm of b8] {\tiny{5}};

\path[draw] 
(b6) edge [bend left = 10,-stealth] node [fill=white, anchor=center, pos=0.5,
inner sep=0.5pt] {\tiny{$\beta$}} (b5) 
(b5) edge [bend left = 10,-stealth] node [fill=white, anchor=center, pos=0.5,
inner sep=0.5pt] {\tiny{$\alpha$}} (b6) 
(b5) edge [bend left=10,-stealth] node [fill=white, anchor=center, pos=0.5,
inner sep=0.5pt] {\tiny{$\gamma'$}} (b7) 
(b7) edge [bend left= 10,-stealth] node [fill=white, anchor=center, pos=0.5,
inner sep=0.5pt] {\tiny{$\delta'$}} (b5) 
(b7) edge [bend left = 10,-stealth] node [fill=white, anchor=center, pos=0.5,
inner sep=0.5pt] {\tiny{$\zeta$}} (b8) 
(b8) edge [bend left = 10,-stealth] node [fill=white, anchor=center, pos=0.5,
inner sep=0.5pt] {\tiny{$\eta$}} (b7) 
(b8) edge [bend left = 10,-stealth] node [fill=white, anchor=center, pos=0.5,
inner sep=0.5pt] {\tiny{$\kappa'$}} (b5) 
(b5) edge [bend left = 10,-stealth] node [fill=white, anchor=center, pos=0.5,
inner sep=0.5pt] {\tiny{$\lambda'$}} (b8); 

\node (bl) [below right = 0.75cm and 0.1cm of b6] {$\rightarrow$};
\node (a00) [right=0.1cm of bl] {$\bar Q:$};
\node[main node] (aa5) [above right = 0.5cm and 1.3 cm of a00] {$v$};
\node[main node] (aa7) [below right = 1.8cm and 0.9cm of aa5] {$c$};
\node[main node] (aa8) [below left = 1.8cm and 0.9cm of aa5] {$d$};

\node (ax) [below left = 0.6cm and 0.7cm of aa5] {\tiny{4}};
\node (ay) [below right = 0.1cm and 0.85cm of aa8] {\tiny{5}};

\path[draw] 
(aa5) edge [loop above,looseness=15, out=120, in =60, distance=0.8cm, -stealth]
node {\tiny{$\varepsilon$}} (aa5)
(aa5) edge [bend left=10,-stealth] node [fill=white, anchor=center, pos=0.5,
inner sep=0.5pt] {\tiny{$\gamma'$}} (aa7) 
(aa7) edge [bend left= 10,-stealth] node [fill=white, anchor=center, pos=0.5,
inner sep=0.5pt] {\tiny{$\delta'$}} (aa5) 
(aa7) edge [bend left = 10,-stealth] node [fill=white, anchor=center, pos=0.5,
inner sep=0.5pt] {\tiny{$\zeta$}} (aa8) 
(aa8) edge [bend left = 10,-stealth] node [fill=white, anchor=center, pos=0.5,
inner sep=0.5pt] {\tiny{$\eta$}} (aa7) 
(aa8) edge [bend left = 10,-stealth] node [fill=white, anchor=center, pos=0.5,
inner sep=0.5pt] {\tiny{$\kappa'$}} (aa5) 
(aa5) edge [bend left = 10,-stealth] node [fill=white, anchor=center, pos=0.5,
inner sep=0.5pt] {\tiny{$\lambda'$}} (aa8); 
    \end{tikzpicture}
    \caption{}
    \label{fig:intermediate}
\end{figure}

Maintain the notation of Theorem \ref{thm:MoritaEquivalence}, and let $\cali'_f$ be its evaluation ideal of $K Q'$
associated to the polynomials $\{f_n\}$. We show below that the algebra $K Q'/\cali'_f$ is both isomorphic to $K
Q/\cali_f$ and Morita equivalent to $K\bar Q/\bar\cali_f$; Theorem \ref{thm:MoritaEquivalence} immediately follows.
Note that by the last paragraph, we ``favored'' the vertex $a$ in the construction of $Q'$ by choosing it as the
partial copy of $a$ and $b$ before renaming it $v_{ab}$ in $\bar Q$, and we could equally have chosen to favor $b$
in a similar way.  This choice is insignificant in that if we favored $b$ when constructing $Q'$ then the promised
isomorphism and Morita equivalence still hold. In particular, we emphasize that in the direct construction of $\bar
Q$ described by Definition \ref{def:quiverContraction}, the vertices $a$ and $b$ clearly play equal roles, so a
quiver contraction is insensitive to the choice of the favored endpoint of the contractible pair of arrows. 

\begin{prop}
    \label{prop:rerouteIso}
    Maintain the setting of Theorem \ref{thm:MoritaEquivalence}. Then the algebra $K Q'/\cali'_f$ is isomorphic to
$K Q/\cali_f$.  
\end{prop}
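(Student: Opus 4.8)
The plan is to construct an explicit isomorphism $\bar\Phi\colon KQ/\cali_f\to KQ'/\cali'_f$ that reroutes arrows as in the passage from $Q$ to $Q'$, using $\alpha$ and $\beta$ as ``correction factors'' that compensate for the fact that in $Q'$ the vertex $b$ has been stripped of all its arrows except $\alpha,\beta$. First I would record the one structural fact behind everything. Because $\alpha,\beta$ are their own reroutes with $d'(\alpha)=\beta$ and $m'(\alpha)=m'(\beta)=m:=m(\alpha)$, the relations $r_f(\alpha)$ and $r_f(\beta)$ occur unchanged in both $\cali_f$ and $\cali'_f$; and since $m$ is odd (this is exactly where oddness in the definition of a contractible pair is used), these relations are $\tilde f_{m-1}(\alpha\beta)$ and $\tilde f_{m-1}(\beta\alpha)$. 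Writing $\tilde f_{m-1}(x)=c_0+x\,p(x)$ with $c_0=\tilde f_{m-1}(0)\neq 0$ (Definition \ref{def:DistPoly}(b)), the element
\[
  \alpha^\vee:=-c_0^{-1}\,\beta\,p(\alpha\beta)=-c_0^{-1}\,p(\beta\alpha)\,\beta
\]
lies in $e_b\,(KQ/\cali_f)\,e_a$ and satisfies $\alpha\alpha^\vee=e_a$ and $\alpha^\vee\alpha=e_b$, and the same formula gives a two-sided partial inverse of $\alpha$ in $KQ'/\cali'_f$. (For even $m$ the relation $r_f(\alpha)$ would be $\tilde f_{m-1}(\alpha\beta)\alpha$, which does not invert $\alpha\beta$, so oddness is essential here.)

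Next I would define $\Phi\colon KQ\to KQ'/\cali'_f$ on generators by $\Phi(e_c)=e_c$, $\Phi(\alpha)=\alpha$, $\Phi(\beta)=\beta$, and, for every other arrow $\gamma\colon c\to d$ with reroute $\gamma'\colon c'\to d'$ in $Q'$,
\[
  \Phi(\gamma)=
  \begin{cases}
    \gamma' & \text{if } c\neq b,\ d\neq b;\\
    \gamma'\alpha & \text{if } c\neq b,\ d=b;\\
    \alpha^\vee\gamma' & \text{if } c=b,\ d\neq b;\\
    \alpha^\vee\gamma'\alpha & \text{if } c=b,\ d=b.
  \end{cases}
\]
Since $KQ$ is presented by its vertices and arrows subject only to the relations $e_ce_{c'}=\delta_{c,c'}e_c$ and $e_{\source\gamma}\gamma=\gamma=\gamma e_{\target\gamma}$, to get a well-defined algebra homomorphism it is enough to check those relations hold for the images, which is just the source/target bookkeeping $\Phi(\gamma)\in e_c\,(KQ'/\cali'_f)\,e_d$ in each of the four cases. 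The real content is that $\Phi$ kills $\cali_f$: for a dual pair $\{\gamma,\delta=d(\gamma)\}$ of weight $m'$ one computes, using $\alpha\alpha^\vee=e_a$, that $\Phi(\gamma)\Phi(\delta)=\gamma'\delta'$ when at most one endpoint of $\gamma$ is $b$, and $\Phi(\gamma)\Phi(\delta)=\alpha^\vee\gamma'\delta'\alpha$ when both are; together with the identity $\tilde f_{m'-1}(\alpha^\vee X\alpha)=\alpha^\vee\tilde f_{m'-1}(X)\alpha$ for $X\in e_a\,(KQ'/\cali'_f)\,e_a$ (immediate from $\alpha\alpha^\vee=e_a$, $\alpha^\vee\alpha=e_b$), this reduces $\Phi(r_f(\gamma))$ to a left/right translate of $r_f(\gamma')\in\cali'_f$, hence to $0$; the self-dual-loop case and the case $\gamma\in\{\alpha,\beta\}$ are immediate. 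Thus $\Phi$ descends to $\bar\Phi\colon KQ/\cali_f\to KQ'/\cali'_f$.

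Symmetrically I would define $\Phi'\colon KQ'\to KQ/\cali_f$ by the mirror rule (precompose with $\alpha$ when the source was $b$, postcompose with $\alpha^\vee$ when the target was $b$), check in the same way that it descends to $\bar\Phi'\colon KQ'/\cali'_f\to KQ/\cali_f$, and finally verify $\bar\Phi'\bar\Phi=\id$ and $\bar\Phi\bar\Phi'=\id$ on generators. Here the only thing to note is that both $\bar\Phi$ and $\bar\Phi'$ fix $\alpha$ and $\beta$, hence send $\alpha^\vee$ to (the element defined by) the same polynomial formula, so the collapses $\alpha\alpha^\vee=e_a$ and $\alpha^\vee\alpha=e_b$ return each generator to itself. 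This gives $KQ/\cali_f\cong KQ'/\cali'_f$, as claimed.

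The hard part is not conceptual but organizational: verifying that $\Phi$ and $\Phi'$ annihilate the evaluation ideals forces one to run the four endpoint cases ($\source$ and/or $\target$ equal to $b$) against the even/odd dichotomy in the definition of $r_f$ and the self-dual-loop case. All of these collapse cleanly once one has the two inputs isolated above — the invertibility $\alpha\alpha^\vee=e_a$, $\alpha^\vee\alpha=e_b$ modulo the ideal, and the commutation $\tilde f_{m'-1}(\alpha^\vee X\alpha)=\alpha^\vee\tilde f_{m'-1}(X)\alpha$ — so the plan is to prove those first and then let the case analysis go through mechanically.
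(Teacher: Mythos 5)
Your proposal is correct and follows essentially the same route as the paper: the paper also builds mutually inverse maps on generators by inserting a pair of elements $\sigma_1\in e_a(KQ/\cali_f)e_b$, $\sigma_2\in e_b(KQ/\cali_f)e_a$ with $\sigma_1\sigma_2=e_a$, $\sigma_2\sigma_1=e_b$ (extracted, exactly as you do, from $r_f(\alpha),r_f(\beta)$ and the nonvanishing constant term of $\tilde f_{m-1}$, which is where oddness of $m$ enters) whenever an arrow is incident to $b$. The only difference is cosmetic — the paper normalizes the constant term to $-1$ and puts the polynomial factor on the $a\to b$ element ($\sigma_1=g(\alpha\beta)\alpha$, $\sigma_2=\beta$) whereas you put it on the $b\to a$ element — and your write-up of the verification that the evaluation ideal is killed is more detailed than the paper's, which leaves it as a straightforward check.
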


\begin{proof}
We will construct mutually inverse homomorphisms $\Phi: K Q/\cali_f \ra K Q'/\cali'_f$ and $\Psi: K Q'/\cali'_f\ra
K Q/\cali_f$ to prove the proposition. To do so, we use presentations of the algebras as usual: since $K Q$ is the
algebra generated by the set $Q^{\le 1} =\{e_u: u\in Q_0\}\cup Q_1$ subject only to the relations that
$e_ue_v=\delta_{u,v}e_u$ for all $u,v\in Q_0$ and the relations $e_u\gamma =\gamma =\gamma e_v$ for each arrow
$\gamma: u \ra v$ in $Q_1$, the algebra $K Q/\cali_f$ is generated by the same set $Q^{\le 1}$ subject to the above
relations and the relations $r_f(\alpha)$ for all $\alpha\in Q_1$.  We may therefore construct $\Phi$ by inducing
it from a function $\varphi: Q^{\le 1} \ra K Q'/\cali_f'$ which respects all the necessary relations.  Similarly,
we may construct the homomorphism $\Psi$ from a function $\psi: Q'^{\le 1}=\{e_s:s\in Q'_0\}\cup Q'_1\ra K
Q/\cali_f$ which respects the necessary conditions.

Let $m=m(a,b)$.  Then $m\ge 3$ and $m$ is odd by Definition \ref{def:quiverContraction}. By scaling if necessary,
we may assume the polynomial $f_{m-1}$ has constant term $-1$, in which case the relations
$r_f(\alpha),r_f(\beta)\in \cali_f$ must be of the form 
\[
    f_{m-1}(\alpha,\beta)= g(\alpha\beta)\alpha \beta-e_a, \quad
    f_{m-1}(\beta,\alpha)=g(\beta\alpha)\beta\alpha-e_b=\beta g(\alpha\beta)\alpha -e_b
\]
for some polynomial $g\in K[x]$, respectively. Let 
\[
    \sigma_1=g(\alpha\beta)\alpha,\quad \sigma_2=\beta.
\]
Then $\sigma_1,\sigma_2$ make sense in both $K Q$ and $K Q'$, and 
we have $r_f(\alpha)=\sigma_1\sigma_2-e_a, r_f(\beta)=\sigma_2\sigma_1-e_b$, so that
\begin{equation}
     \sigma_1\sigma_2 =e_a, \quad\sigma_2\sigma_1=e_b
    \label{eq:sigmaRelations}
\end{equation}
in both $K Q/\cali_f$ and $K Q'/\cali_f'$. 
To define the functions $\varphi: Q^{\le 1} \ra K Q'/\cali'_f$ and $\psi: Q'^{\le 1}\ra KQ/\cali_f$, first let 
 \[
            X_+=\{\gamma\in Q_1: \source(\gamma)=b\}\setminus\{\beta\},
        \]
        and
        \[
            X_-=\{\gamma\in Q_1: \target(\gamma)=b\}\setminus\{\alpha\}.
        \]
        Note that the set $X_+\cap X_{-}$ consists of all loops at $b$ in
        $Q_1$, and each loop in it is rerouted to a loop at $a$ in $Q'$. 
        Next, let $\varphi(e_u)=e_u$ for all $u\in Q_0=Q'_0$. Finally, recall
        that  $Q'_1=\{\gamma':\gamma\in Q_1\}$ and define  
        $\varphi$ and $\psi$ on $Q_1$ and $Q_1'$ by letting 
\[
    \varphi(\gamma) = 
    \begin{cases}
    \sigma_2\gamma'\sigma_1 & \text{if $\gamma\in X_+\cap X_-$};\\
        \sigma_2 \gamma' & \text{if $\gamma\in X_+\setminus X_-$};\\
        \gamma'\sigma_1 & \text{if $\gamma\in X_-\setminus X_+$};\\
        \gamma & \text{otherwise},\\
    \end{cases}\quad
    \psi(\gamma') = 
    \begin{cases}
    \sigma_1\gamma\sigma_2 & \text{if $\gamma\in X_+\cap X_-$};\\
        \sigma_1 \gamma & \text{if $\gamma\in X_+\setminus X_-$};\\
        \gamma\sigma_2 & \text{if $\gamma\in X_-\setminus X_+$};\\
        \gamma & \text{otherwise}\\
    \end{cases}
\]
for each $\gamma\in Q_1$. Using the relations in \eqref{eq:sigmaRelations}, it is straightforward to verify that
$\varphi$ and $\psi$ respect all necessary relations mentioned in the previous paragraph and induce mutually
inverse algebra homomorphisms $\Phi: K Q/\cali_f\ra  K Q'/\cali'_f$ and $\Psi:K Q'/\cali'_f\ra K Q/\cali_f$, as
desired.  
\end{proof}

\begin{prop} 
  \label{prop:MoritaEquivalence} 
  Maintain the setting of Theorem \ref{thm:MoritaEquivalence}. Then the algebra $K \bar Q/\cali(\bar\calr)$ is
  Morita equivalent to $K Q'/\cali(\calr')$.  
\end{prop}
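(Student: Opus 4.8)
The plan is to identify $K\bar Q/\bar\cali_f$ with a corner of $KQ'/\cali'_f$ (I write $\cali'_f:=\cali(\calr')$ and $\bar\cali_f:=\cali(\bar\calr)$ for the two evaluation ideals in question) and then invoke the standard fact that a ring $S$ is Morita equivalent to $eSe$ whenever $e\in S$ is an idempotent with $SeS=S$. Set $R=KQ'/\cali'_f$ and let $e=1-e_b\in R$, where $b$ is the vertex of $Q'$ that is \emph{not} renamed $v_{ab}$ in passing to $\bar Q$. Recalling from the proof of Proposition~\ref{prop:rerouteIso} and Equation~\eqref{eq:sigmaRelations} the elements $\sigma_1\in e_a(KQ')e_b$ and $\sigma_2=\beta\in e_b(KQ')e_a$ with $\sigma_1\sigma_2=e_a$ and $\sigma_2\sigma_1=e_b$ in $R$, and using $e_a=ee_ae\in eRe$, we get $e_b=\sigma_2e_a\sigma_1\in ReR$; since $e_u\in eRe$ for every vertex $u\neq b$ as well, $ReR=R$. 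Hence $R$ is Morita equivalent to $eRe$, and it remains to exhibit an isomorphism $eRe\cong K\bar Q/\bar\cali_f$.

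For this I would use the combinatorial observation that in $Q'$ the vertex $b$ has exactly one incoming arrow, namely $\alpha$, and exactly one outgoing arrow, namely $\beta$, because every other arrow of $Q$ incident to $b$ was rerouted to $a$. Hence every path of $Q'$ whose source and target are both different from $b$ decomposes uniquely into arrows $\gamma'$ with $\gamma\in Q_1\setminus\{\alpha,\beta\}$ and blocks $\alpha\beta$ (each sojourn at $b$ contributes one such block), and the assignment $e_{v_{ab}}\mapsto e_a$, $e_u\mapsto e_u$ for $u\neq v_{ab}$, $\gamma'\mapsto\gamma'$ and $\ve{ab}\mapsto\alpha\beta$ defines a $K$-algebra isomorphism $\Theta\colon K\bar Q\xrightarrow{\sim} e(KQ')e$ onto the corner of the path algebra itself. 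It is a homomorphism because it respects the defining relations of $K\bar Q$, and it is bijective because it is precisely the operation of expanding each loop $\ve{ab}$ into the two-step path $\alpha\beta$, which carries the path basis of $K\bar Q$ bijectively onto the path basis of $e(KQ')e$ just described.

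The crux is then to show that $\Theta$ carries $\bar\cali_f$ onto $e\cali'_f e$; granting this, the standard identity $e(KQ'/\cali'_f)e\cong e(KQ')e/e\cali'_f e$ yields $eRe\cong K\bar Q/\bar\cali_f$ and finishes the proof. Normalizing $f_{m-1}$ to have constant term $-1$ as in Proposition~\ref{prop:rerouteIso}, and using that $m=m(\alpha)$ is odd (so $f_{m-1}$ is even), one has $r_f(\alpha)=\tilde f_{m-1}(\alpha\beta)$, $r_f(\beta)=\tilde f_{m-1}(\beta\alpha)$ and $r_f(\ve{ab})=\tilde f_{m-1}(\ve{ab})$; since $\Theta$ is a ring isomorphism, $\Theta(\bar\cali_f)$ is the ideal of $e(KQ')e$ generated by $\Theta(r_f(\ve{ab}))=r_f(\alpha)$ and by the relations $r_f(\gamma')$ for $\gamma\in Q_1\setminus\{\alpha,\beta\}$, all of which lie in $\cali'_f\cap e(KQ')e=e\cali'_f e$. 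For the reverse inclusion one writes an arbitrary element of $e\cali'_f e$ as $e\big(\sum_j x_j\rho_j y_j\big)e$ with each $\rho_j$ one of the generators $r_f(\gamma'),r_f(\alpha),r_f(\beta)$ of $\cali'_f$; the terms with $\rho_j$ equal to some $r_f(\gamma')$ or to $r_f(\alpha)$ already lie in the ideal of $e(KQ')e$ generated by those elements, since the source and target of those relations are different from $b$, while in a term with $\rho_j=r_f(\beta)$ one has $ex_je_b=w\alpha$ and $e_by_je=\beta w'$ for some $w,w'\in e(KQ')e$ (again by the one-in/one-out structure at $b$), so the term equals $w\,\alpha\,\tilde f_{m-1}(\beta\alpha)\,\beta\,w'=w(\alpha\beta)\,r_f(\alpha)\,w'$ by the identity $\alpha(\beta\alpha)^i\beta=(\alpha\beta)^{i+1}$, hence lies in $\langle r_f(\alpha)\rangle$ inside $e(KQ')e$. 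This gives $\Theta(\bar\cali_f)=e\cali'_f e$, as needed.

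I expect the last paragraph to be the main obstacle: it amounts to checking that, once one restricts to the corner at $e=1-e_b$, the relation $r_f(\beta)$ attached to the eliminated vertex $b$ becomes redundant, being absorbed into the ideal generated by $r_f(\alpha)$. This is exactly where the hypothesis that the weight $m$ is odd enters, since it forces $r_f(\alpha)$ and $r_f(\beta)$ to be the $(\alpha\beta)$- and $(\beta\alpha)$-evaluations of one and the same polynomial $\tilde f_{m-1}$, which is what makes them interchangeable under pre- and post-composition with $\alpha$ and $\beta$.
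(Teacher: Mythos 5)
Your proposal is correct and follows essentially the same route as the paper's proof: the same idempotent $e=1-e_b$, the same use of $\sigma_1,\sigma_2$ to see that $e$ is full (the paper phrases this as $e\Lambda\cong\oplus_{u\neq b}e_u\Lambda$ being a progenerator), the same identification $\ve{ab}\mapsto\alpha\beta$ exploiting the one-in/one-out structure at $b$, and the same key computation $p\,\alpha\,r_f(\beta)\,\beta\,q=p\,r_f(\alpha)\,\alpha\beta\,q$ to absorb the relation at the eliminated vertex. The only cosmetic difference is that you build the isomorphism at the level of path algebras and then match ideals, whereas the paper defines the map on the quotient and proves surjectivity and injectivity separately.
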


\begin{proof}
Let $\Lambda= K Q'/\cali'_f$ and let $\sigma_1, \sigma_2$ be as in the proof of Proposition \ref{prop:rerouteIso}.
Since $\sigma_1\sigma_2 = e_a$ and $\sigma_2\sigma_1 = e_b$ in $\Lambda$, the maps $\phi_1: e_a\Lambda \ra
e_b\Lambda, x\mapsto \sigma_2 x$ and $\phi_2:e_b\Lambda \ra e_a\Lambda, y\mapsto \sigma_1 y$ give mutually inverse
isomorphisms between the projective modules $\Lambda$-modules $e_a \Lambda$ and $e_b\Lambda$.  Set
$V_b=Q'_0\setminus\{b\}$ and let  
\[
e=1-e_b=\sum_{u\in V_b}e_u\in \Lambda.  
\] 
Since $e_a\Lambda\cong e_b\Lambda$, the submodule $\Lambda':=e\Lambda = \oplus_{u\in V_b} (e_u\Lambda)$ of the
regular module $\Lambda$ is a progenerator in the category of $\Lambda$-modules, therefore $\Lambda$ is Morita
equivalent to the endomorphism algebra $\mathrm{End}_\Lambda(\Lambda')$.  We have
$\mathrm{End}_\Lambda(\Lambda')\cong e\Lambda e$ since $e$ is an idempotent, so to prove the proposition it
suffices to show that $K \bar Q/\cali(\bar \calr)$ is isomorphic to $e\Lambda e$. We will do so by inducing a
homomorphism $\Phi: K \bar Q/\cali(\bar\calr) \ra e\Lambda e$ from a function $\phi: \bar Q^{\le 1} :=\{e_u: u\in
\bar Q_0\}\cup \bar Q_1 \ra e\Lambda e$ and then showing that $\Phi$ is bijective. 

Let $v=v_{ab}$ and $\ve{}=\ve{ab}$.  To define the function $\phi: \{e_u: u\in \bar Q_0\}\cup \bar Q_1 \ra e\Lambda
e$, let $\varphi(e_v)=e_a$ and let $\phi(e_u)=e_u$ for all $u\in \bar Q_0\setminus\{v\}$, then let $\phi(\ve{}) =
\alpha\beta$ and let $\phi(\gamma)= \gamma$ for all $\gamma\in \bar Q_1\setminus\{\ve{}\}$.  Viewing $K\bar
Q/\bar\cali_f$ as the algebra generated by $\bar Q^{\le 1}$ subject to the suitable relations as usual, we can
again check that $\phi$ respects all these relations: indeed, by the definitions of $\bar Q$ and $Q'$, it suffices
to check only the relations involving the loop $\ve{}\in \bar Q_1$, i.e., the relation $e_v \ve{} = \ve{} = \ve{}
e_v$ and the relation $r_f(\ve{})=\tilde{f}_{m(\alpha)-1}(\ve{})$. These relations are respected by $\phi$ since
$\phi(e_v)=e_a, \phi(\ve{})=\alpha\beta$, $e_a\alpha\beta=\alpha\beta=\alpha\beta e_a$, and $\tilde
f_{m(\alpha)-1}(\alpha\beta)=r_f(\alpha)\in \cali_f$ (see Remark \ref{rmk:sqrt}). It follows that $\Phi$ induces a
unique algebra homomorphism $\Phi: K \bar Q/\bar\cali_f \ra e\Lambda e$.  

To prove that $\Phi$ is bijective, we keep the notation from Definition \ref{def:quiverContraction} and from the
definition of $Q'$. Let $X=Q_1\setminus\{\alpha,\beta\}$. Then 
\[
\bar Q_1=\{\gamma':\gamma\in X\}\sqcup\{\ve{}\},\quad
Q'_1=\{\gamma':\gamma\in X\}\sqcup\{\alpha,\beta\},
\]
and $\Phi(\gamma')=\gamma'$ for all $\gamma\in X$ (where the $\gamma'$s stand for their respective images in $K
\bar Q/\bar\cali_I$ and $e\Lambda e$).  Let $\calp_b$ be the set of all paths on $Q'$ which both start and end at a
vertex in $V_b$. Then $e\Lambda e$ is spanned by the classes of paths in $\calp_b$. Now, since $\alpha,\beta$ are
the only arrows in $Q'$ with $b$ as its target and source, respectively, if a path $p\in \calp_b$ passes $b$ at any
point then it must have traveled to $b$ from $a$ via $\alpha$ and then immediately traveled back to $a$ via
$\beta$.  Consequently, $p$ is a product of $\alpha\beta=\Phi(\ve{})$ and arrows from the set $X=\Phi(X)\se \image
\Phi$. It follows that $p\in \image \Phi$, so $\Phi$ is surjective.

It remains to prove that $\Phi: K \bar Q/\bar\cali_f\ra e\Lambda e $ is injective. Since $\Lambda = K Q'/\cali'_f$,
it suffices to show that $e\cali'_fe \se \Phi(\bar\cali_f)$. Since $Q'_1=\{\alpha,\beta\}\sqcup\{\gamma':\gamma\in
X\}$, the set $e\cali_f'e$ is spanned by nonzero elements of the form 
\begin{equation}
    y_1 = p_1 [r_f(\alpha)] q_1=p_1[\Phi(f(\ve{},\ve{}))]q_1, 
    \label{eq:y1}
\end{equation}
\begin{equation}
        y_2= p_2[r_f(\beta)]q_2, 
    \label{eq:y2}
\end{equation}
and
\begin{equation}
    y_3= p_3 [r_f(\gamma')] q_3 =p_3[\Phi(r_f(\gamma))]q_3
    \label{eq:y3}
\end{equation}
where $\gamma\in X$ and $p_i,q_i$ are paths in $K Q'$ for all $i\in \{1,2,3\}$.  We need to show that
$y_1,y_2,y_3\in \Phi(\bar\cali_f)$.  Note that the following holds for all $i\in \{1,2,3\}$:
\begin{enumerate}
    \item Since 
        $e=\sum_{u\in V_b}e_u$ and $y_i\neq 0$, we have $\source(p_i)$, $
        \target(q_i)\in V_b$. 
\item Let $r_i$ be the
bracketed relation in $y_i$ in Equations \eqref{eq:y1}-\eqref{eq:y3}. Then
$\target(p_i)=\source(r_i), \target(r_i)=\source(q_i)$ since $y_i\neq 0$. In particular, we have $\target(p_1)=\source(q_1)=a\in V_b$  and
$\target(p_3)$, $\source(q_3)\in V_b$ since the rerouted arrow $\gamma'$ cannot be
incident to $b$.
\item By (a) and (b), $p_3,q_3\in \calp_b\se \image\Phi$ where the last
    containment holds by the last paragraph. Furthermore, we have
    $r_1=r_f(\alpha)=\Phi(r_f(\ve{}))$ and
    $r_3=r_f(\gamma')=\Phi(r_f(\gamma))$ by the definition of 
    $\Phi$. It follows that $y_1,y_3\in\Phi(\bar\cali_f)$, as desired.
\item By (b) we have $\target(p_2)=b=\source(q_2)$, but since $\alpha, \beta$ are the only arrows in
$Q'$ with $b$ as its target and source, respectively, we must have $p_2 = p'_2
\alpha$ and $q_2=\beta q''_2$ for some paths $p'_2, q''_2\in \calp_b$. 
 It follows that 
\begin{equation}
    y_2 = p''_2\alpha[r_f(\beta)]\beta q''_2= p'_2
    [r_f(\alpha)]\alpha\beta q''_2 = p'_2 [\Phi(r_f(\vea))] q'_2
    \label{eq:yy2}
\end{equation}
where $q'_2=\alpha\beta q''_2$. Note that $p_2',q_2'\in \calp_b\se \image \Phi$, therefore
$y_2\in \Phi(\bar\cali_f)$. 
\end{enumerate}
The proof is now complete. 
\end{proof}

\begin{remark}
    \label{rmk:fieldAssumptions}
    We have assumed that the field $K$ is algebraically closed in Theorem \ref{thm:IsoQuotients}, Theorem
    \ref{thm:ArtinWedderburn}, Theorem \ref{thm:MoritaEquivalence}, Proposition \ref{prop:rerouteIso} and
    Proposition \ref{prop:MoritaEquivalence}.  However, it is worth noting these results also hold, by the exact
    same proofs, if we assume instead that $K$ is an arbitrary field of characteristic zero and that $\{f_n\}$ is a
    family of polynomials which all split over $K$ and satisfy Conditions (a) and (b) of Definition
    \ref{def:DistPoly}. The purpose of the assumption that $K$ is algebraically closed is to guarantee that the
    polynomials $\{f_n\}$ defined by Equation \ref{eq:PowerPoly} split and hence form a uniform family over $K$;
    the simple forms of these polynomials will greatly simplify the study of representations in categories of the
    form $\rep_K(\bar Q,\bar \cali_f)$ appearing in \autoref{sec:contractionExamples} and the remaining parts of
    the paper.
\end{remark}

\subsection{Representations of Contracted Quivers}
\label{sec:repAnalysis}

Let $(\bar Q, \bar d, \bar m)$ be a generalized double quiver obtained from the double quiver $Q$ of a Coxeter
system $(W,S)$ via a sequence of contractions. Let $\bar\cali_f$ be the evaluation ideal of a uniform  family
$\{f_n\}$ of polynomials over $K$. Then the category mod-$A$ is equivalent to the category $\mathrm{rep}_K(\bar
Q,\bar\cali_f)$. We develop tools for constructing and analyzing representations in $\mathrm{rep}_K(\bar Q,\bar
\cali_f)$ in this subsection.

Let $M=(M_a,M_\alpha)_{a\in \bar Q_0,\alpha\in \bar Q_1}$ be a representation in $\mathrm{rep}_K \bar Q$. The
definition of $\bar\cali_f$ implies that $M$ is a representation in
$\brepqrf$ if and only if for every arrow of the form $\alpha:a\ra b$ in $\bar Q$, the set of assignments
\[
    M_{\{\alpha,\beta\}}:=\{M_a, M_b, M_\alpha,
M_\beta\}
\]  
where $\beta=\bar d(\alpha)$ satisfies the relations $r_f(\alpha)$ and $r_f(\beta)$ in the sense that
the maps
\begin{equation}
    \label{eq:generalizedLocalRelations1}
 f_{m-1}(M_\alpha,M_\beta):=
    \begin{cases}
    \tilde f_{m-1}(M_\alpha M_\beta) & \text{if $m$ is odd};\\
        \tilde f_{m-1}(M_\alpha M_\beta)M_\alpha & \text{if $m$ is even}
    \end{cases}
\end{equation}
and
\begin{equation}
    \label{eq:generalizedLocalRelations2}
f_{m-1}(M_\beta,M_\alpha):=
    \begin{cases}
        \tilde f_{m-1}(M_\beta M_\alpha) & \text{if $m$ is odd};\\
        \tilde f_{m-1}(M_\alpha M_\beta)M_\alpha & \text{if $m$ is even}
    \end{cases}
\end{equation}
where $m=m(\alpha)$ both equal 0. Call a set of the form $M_{\{\alpha,\beta\}}$ a \emph{local representation for
$\{\alpha,\beta\}$} if it satisfies the equations \eqref{eq:generalizedLocalRelations1} and
\eqref{eq:generalizedLocalRelations2}. Then to construct a representation $M\in \brepqrf$ it suffices to
\emph{assemble} a collection of local representations 
\[
    \mathcal{M}:=\{M_{\{\alpha,\beta\}}\,\vert\,\alpha\in
Q_1,\beta=\bar\alpha\}
\]
that is  \emph{consistent} in the sense that for every vertex $a\in \bar Q_0$, there is a common vector space $V_a$
such that $M_a=V_a$ for every local representation $M_{\alpha,\beta\}}\in \mathcal{M}$ where $\alpha$ is incident
to $a$.  Here, we \emph{assemble} a consistent collection $\mathcal{M}$ into a representation $M\in \brepqrf$ as
follows: first, for each vertex $a\in \bar Q_0$,  pick any arrow $\alpha\in \bar Q_1$ incident to $a$, denote
$M_{\{\alpha,\bar\alpha\}}$ by $M'$, then let $M_a=M'_a$; second, for each arrow $\gamma\in \bar Q_1$ and
$\beta=\bar d(\alpha)$, denote $M_{\{\alpha,\beta\}}$ by $M''$, then let $M_\alpha=M''_\alpha$ and
$M_\beta=M''_\beta$. Note that the first step can be done unambiguously, independent of the choice of the arrow
$\alpha$, if and only if $\mathcal{M}$ is consistent. Also note that in the above discussion we allow the
possibility that $\alpha$ is self-dual, in which case $\beta=\alpha$ and $a=b$.

Subsequently, we will often construct a representation in $\brepqrf$ by assembling a consistent collection of local
representations. The local representations can in turn be studied, by comparison of the equations
\eqref{eq:rf_alpha}, \eqref{eq:rf_beta} and \eqref{eq:generalizedLocalRelations1},
\eqref{eq:generalizedLocalRelations2}, similarly to how we studied $\rep_K(Q,\cali_f)$ in the dihedral case in
\autoref{sec:IsoQuotients}:  

\begin{prop}
\label{prop:edgeLoopRep}
Let $M\in \rep_K \bar Q$. Let $\alpha:a\ra b$ be an arrow in $\bar Q_1$, let $\beta=\bar d(\alpha)$, and let
$m=\bar m(\alpha)$. Let $M_{\{\alpha,\beta\}}=\{M_a,M_b,M_\alpha,M_\beta\}$.  Then the following results hold.  
\begin{enumerate}
    \item If $m=\infty$, then $M_{\{\alpha,\beta\}}$ is automatically a local representation for
        $\{\alpha,\beta\}$. If $m<\infty$, then $M_{\{\alpha,\beta\}}$ is a local representation whenever $\papb$
        and $\pbpa$ are  diagonalizable maps whose eigenvalues are roots of $\tilde f_{m-1}$. 
    \item If $M_{\{\alpha,\beta\}}$ is a local representation for $\{\alpha,\beta\}$ and $m<\infty$, then $\papb$
        and $\pbpa$ are diagonalizable and their eigenvalues are either roots of $\tilde f_{m-1}$ or zero. 
\end{enumerate}
\end{prop}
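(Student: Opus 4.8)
The plan is to reduce both parts to Lemma~\ref{lemm:decompose} after unwinding what it means for $M_{\{\alpha,\beta\}}$ to be a local representation. First I would dispose of two easy cases. If $m=\infty$, then $r_f(\alpha)=0$ by definition, so there is nothing to check and $M_{\{\alpha,\beta\}}$ is automatically a local representation; this gives the first assertion of~(a). If $\alpha=\beta$ is a self-dual loop $\varepsilon$, then by Remark~\ref{rmk:sqrt} the weight $m$ is finite and odd and the only relevant relation is $r_f(\varepsilon)=\tilde f_{m-1}(\varepsilon)$, so $M_{\{\alpha,\beta\}}$ is a local representation exactly when $\tilde f_{m-1}(M_\varepsilon)=0$; since $\tilde f_{m-1}$ has distinct roots and nonzero constant term, this holds if and only if $M_\varepsilon$ is diagonalizable with eigenvalues among the roots of $\tilde f_{m-1}$, proving both (a) and (b) in this case. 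From now on I assume $m<\infty$ and $\alpha\neq\beta$.

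In this main case, set $h=\tilde f_{m-1}$, $U=M_a$, $V=M_b$, $A=M_\alpha$, $B=M_\beta$. Since $\{f_n\}$ is a uniform family, $h$ has degree $k=\lfloor(m-1)/2\rfloor\ge 1$ with $k$ distinct nonzero roots $z_1,\dots,z_k$, which is precisely the hypothesis on $h$ in Lemma~\ref{lemm:decompose}. Rewriting the defining relations $r_f(\alpha)$ and $r_f(\beta)$ via Equation~\eqref{eq:TildeEval} (compare \eqref{eq:rf_alpha} and \eqref{eq:rf_beta}), the condition that $M$ satisfy them becomes: $h(BA)=0$ and $h(AB)=0$ when $m$ is odd, and $h(AB)A=0$ and $h(BA)B=0$ when $m$ is even; for the even case I would invoke the elementary identity $h(AB)A=Ah(BA)$ (and its mirror $h(BA)B=Bh(AB)$) so that these two relations line up exactly with the two equations of~\eqref{eq:even}. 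In other words, ``$M_{\{\alpha,\beta\}}$ is a local representation'' is synonymous with~\eqref{eq:odd} when $m$ is odd and with~\eqref{eq:even} when $m$ is even, where the operators $\papb$ and $\pbpa$ are the composites $AB$ and $BA$.

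Given this dictionary, part (b) is immediate from Lemma~\ref{lemm:decompose}(a): if $M_{\{\alpha,\beta\}}$ is a local representation, then $\papb$ and $\pbpa$ are diagonalizable with eigenvalues in $\{z_1,\dots,z_k\}$ when $m$ is odd and in $\{0,z_1,\dots,z_k\}$ when $m$ is even, i.e.\ eigenvalues that are roots of $\tilde f_{m-1}$ or zero. For the remaining half of~(a), suppose $\papb$ and $\pbpa$ are diagonalizable with all eigenvalues among $z_1,\dots,z_k$; since $h$ is, up to a nonzero scalar, the product of the distinct linear factors $x-z_i$, it annihilates every such operator, so $h(AB)=h(BA)=0$. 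This is~\eqref{eq:odd} when $m$ is odd and, a fortiori, forces $h(AB)A=h(BA)B=0$, i.e.~\eqref{eq:even}, when $m$ is even; either way $M_{\{\alpha,\beta\}}$ satisfies $r_f(\alpha)$ and $r_f(\beta)$ and is a local representation. The argument is essentially bookkeeping; the one step needing care is the translation in the second paragraph — keeping the composition order straight (recall $M_p=M_{\alpha_n}\circ\cdots\circ M_{\alpha_1}$) and using $h(AB)A=Ah(BA)$ so that the even-weight relations match~\eqref{eq:even} symmetrically — after which everything follows from Lemma~\ref{lemm:decompose} and the standard fact that a polynomial with distinct roots annihilates an operator precisely when that operator is diagonalizable with eigenvalues among the polynomial's roots.
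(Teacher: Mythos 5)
Your proof is correct and follows essentially the same route as the paper's (much more compressed) argument: translate the local-representation conditions \eqref{eq:generalizedLocalRelations1}--\eqref{eq:generalizedLocalRelations2} into the hypotheses \eqref{eq:odd}/\eqref{eq:even} of Lemma~\ref{lemm:decompose}, deduce (b) from Lemma~\ref{lemm:decompose}(a), and deduce the converse half of (a) from the fact that a polynomial with distinct roots annihilates any diagonalizable operator whose eigenvalues lie among those roots. Your extra care with the composition order and with the self-dual-loop case (where the relation is $\tilde f_{m-1}(\varepsilon)$ rather than $\tilde f_{m-1}(\varepsilon^2)$) is welcome but does not change the substance.
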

\begin{proof}
    Part (a) follows from Equations \eqref{eq:generalizedLocalRelations1} and
\eqref{eq:generalizedLocalRelations2}. Part (b) follows from the same equations and Lemma \ref{lemm:decompose}.(a).
\end{proof}

The following specializations of Proposition \ref{prop:edgeLoopRep}.(a) will be very useful:

\begin{corollary}
\label{lemm:trivialLocalReps}
Let $M, \alpha,\beta, a,b,m$ and $M_{\left\{\alpha,\beta \right\}}$
be as in Proposition \ref{prop:edgeLoopRep}. 
Let $\{f_n\}$ be the polynomial family defined by \eqref{eq:PowerPoly}.  Then the following holds for any positive
integer $n$.
\begin{enumerate}
    \item If $M_a=M_b=K^n$ and $M_\alpha=M_\beta=\id$, then
        $M_{\{\alpha,\beta\}}$ defines a local
    representation for $\{\alpha,\beta\}$.
\item  Suppose that  $m\ge 5$ and $\beta\neq \alpha$. If $M_a= M_b=K^n$ and $(M_\alpha
    M_\beta)^2=(M_\beta M_\alpha)^2=\id$, then 
 $M_{\{\alpha,\beta\}}$ defines a local
    representation for $\{\alpha,\beta\}$.
\end{enumerate}
\end{corollary}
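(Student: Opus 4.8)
The plan is to read off both statements from Proposition~\ref{prop:edgeLoopRep}(a) once we record the explicit shape of $\tilde f_{m-1}$ for the family~\eqref{eq:PowerPoly}. First I would note: when $m$ is odd, $m-1$ is even, $f_{m-1}=x^{m-1}-1$, and hence $\tilde f_{m-1}=x^{(m-1)/2}-1$; when $m$ is even, $m-1$ is odd, $f_{m-1}=x^{m-1}-x$, and hence $\tilde f_{m-1}=x^{(m-2)/2}-1$. So in every finite case $\tilde f_{m-1}=x^{k}-1$ with $k=\lfloor (m-1)/2\rfloor$, where $k\ge 1$ if $m\ge 3$ and $k\ge 2$ if $m\ge 5$, and the roots of $\tilde f_{m-1}$ are the $k$-th roots of unity. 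The only root facts needed are that $1$ is a root of $\tilde f_{m-1}$ whenever $m\ge 3$ and that $\{1,-1\}$ are roots of $\tilde f_{m-1}$ for the values of $m$ relevant to part~(b); both follow directly from the displayed form of $\tilde f_{m-1}$.

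For part~(a): if $m=\infty$ the claim is the first sentence of Proposition~\ref{prop:edgeLoopRep}(a), so assume $m<\infty$. Then $M_\alpha=M_\beta=\id_{K^n}$ gives $M_\alpha M_\beta=M_\beta M_\alpha=\id_{K^n}$, which is (trivially) diagonalizable with the single eigenvalue $1$; since $1$ is a root of $\tilde f_{m-1}=x^{k}-1$ (as $k\ge 1$ because $m\ge 3$), the hypotheses of the ``whenever'' clause of Proposition~\ref{prop:edgeLoopRep}(a) are met, so $M_{\{\alpha,\beta\}}$ is a local representation for $\{\alpha,\beta\}$. (Equivalently one may substitute $M_\alpha=M_\beta=\id_{K^n}$ into \eqref{eq:generalizedLocalRelations1}--\eqref{eq:generalizedLocalRelations2} and use $\tilde f_{m-1}(\id)=\id^{\,k}-\id=0$.)

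For part~(b): again if $m=\infty$ the conclusion is immediate from Proposition~\ref{prop:edgeLoopRep}(a), so assume $m<\infty$; then $m\ge 5$ gives $\tilde f_{m-1}=x^{k}-1$ with $k\ge 2$. The hypothesis $(M_\alpha M_\beta)^2=(M_\beta M_\alpha)^2=\id_{K^n}$ says that the polynomial $x^2-1=(x-1)(x+1)$ annihilates both $M_\alpha M_\beta$ and $M_\beta M_\alpha$; since $\operatorname{char} K=0$ this polynomial is separable, so both operators are diagonalizable with eigenvalues lying in $\{1,-1\}$. As these values are roots of $\tilde f_{m-1}$, Proposition~\ref{prop:edgeLoopRep}(a) again applies and $M_{\{\alpha,\beta\}}$ is a local representation.

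I do not expect a genuine obstacle: each part is essentially a one-line application of Proposition~\ref{prop:edgeLoopRep}(a). The two points deserving (mild) care are the explicit evaluation of $\tilde f_{m-1}$ from \eqref{eq:PowerPoly}, which is what shows that the eigenvalues produced ($1$ in part~(a), and $1$ and $-1$ in part~(b)) lie among its roots, and the use of $\operatorname{char} K=0$ in part~(b) to upgrade ``annihilated by $x^{2}-1$'' to ``diagonalizable'', which is precisely what lets us land in the ``whenever'' clause of Proposition~\ref{prop:edgeLoopRep}(a) rather than having to check a relation by hand.
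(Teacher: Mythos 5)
Your argument is essentially the paper's own proof, which consists precisely of citing Proposition~\ref{prop:edgeLoopRep}(a) together with the facts that $1$ is a root of $\tilde f_{m-1}$ and that $x^2-1$ divides $\tilde f_{m-1}$ when $m\ge 5$; your extra remarks (the explicit computation $\tilde f_{m-1}=x^{k}-1$ with $k=\lfloor(m-1)/2\rfloor$, and the separability of $x^2-1$ in characteristic zero to get diagonalizability) merely spell out what the paper leaves implicit. So on the level of approach there is nothing to distinguish the two.

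There is, however, one point you should be aware of, and it affects the paper's proof exactly as much as yours: from $\tilde f_{m-1}=x^{k}-1$ it does \emph{not} ``follow directly'' that $-1$ is a root for every $m\ge 5$. One has $-1$ as a root of $x^{k}-1$ only when $k=\lfloor(m-1)/2\rfloor$ is even; for $m=7$ or $m=8$, for instance, $\tilde f_{m-1}=x^{3}-1$ and $-1$ is not a root, so an operator $M_\alpha M_\beta$ with $(M_\alpha M_\beta)^2=\id$ but an eigenvalue $-1$ does not satisfy $\tilde f_{m-1}(M_\alpha M_\beta)=0$. Thus the justification of part~(b) is literally valid only for those $m\ge 5$ with $\lfloor(m-1)/2\rfloor$ even ($m=5,6,9,10,\dots$). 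Since you reproduced the paper's reasoning faithfully, this is not a defect you introduced, but the sentence claiming that $\{1,-1\}$ are roots of $\tilde f_{m-1}$ for all relevant $m$ is the one step in your write-up that does not actually follow from the displayed form of $\tilde f_{m-1}$.
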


\begin{proof}
 The results follow from Proposition \ref{prop:edgeLoopRep}.(a), the fact that
$1$ is a root for $\tilde f_{m-1}$, and the fact that $x^2-1$
divides $\tilde f_{m-1}$ whenever $m\ge 5$.
\end{proof}

To prove results on mod-$A$ in Section \autoref{sec:mod-AResults}, we will often need to not only construct a
suitable representation $M$ in $\rep_K(\bar Q, \bar \calr)$ but also prove that $M$ is simple or not semisimple.
The proofs typically proceed in the following way. Consider a specific vertex $a\in \bar Q_0$ and a number of paths
$p_1, p_2,\dots, p_k$ on $\bar Q$ that both start and end at $a$. Recall from \autoref{sec:QuiverReps} that these
paths give rise to endormophisms $\phi_1:=M_{p_1},\phi_2:=M_{p_2}, \dots$ and $\phi_k:=M_{p_k}$ of $M_a$, and that
a subrepresentation $N$ of $M$ must  assign to $a$ a vector space $N_a\se M_a$ that satisfies the invariance
condition $\phi_i(N_a)\se N_a$ for all $1\le i\le k$. Together, these invariance conditions force $N_a$ to take
certain forms, which in turn force $M$ to satisfy certain properties such as being simple. We refer to the analysis
of what form $N_a$ can take as \emph{subspace analysis at $a$}.  

We now explain how we will construct representations to facilitate successful subspace analysis via examples. All
the examples will be used in the proofs of Section \ref{sec:mod-AResults}, and $\{f_n\}$ stands for the uniform
family of polynomials defined by Equation \eqref{eq:PowerPoly} throughout the examples.  Our first method starts
with an irreducible representation $\rho: G\ra \mathrm{GL}(V)$ of a group $G$:

\begin{example}
\label{eg:Miller}
We construct a simple representation $M$ in $\brepqrf$ for the generalized double quiver $\bar Q$ in
\autoref{fig:57} from Remark \ref{rmk:operators}. To start, consider the symmetric group $G=S_q$ where $q\ge 8$ and
any irreducible representation $\rho: G\ra \mathrm{GL}(V)$ of $G$. By \cite{Miller23}, the group $S_q$ can be
generated by two elements $\sigma,\tau$ of orders 2 and 3, respectively.  It follows that
$\rho(\sigma)^2=\rho(\sigma^2)=\rho(e)=\id_V$ and similarly $\rho(\tau)^3=\id_V$. 

To define $M$, first let $M_v= V, M_{\ve{1}}=\rho(\sigma)$ and $M_{\ve{2}}=\rho(\tau)$. This defines local
representations for the sets $\{\ve{1}\}$ and $\{\ve{2}\}$ because $M_{\ve{1}}, M_{\ve{2}}$ satisfy the relations
$r_f(\ve{1})=\ve{1}^2-e_v, r_f(\ve{2})=\ve{2}^3-e_v$, respectively. To finish the definition of $M$, it remains to
assign a local representation for the dual arrows $\{\alpha,\beta\}$ that is consistent with these two local
representations.  By Corollary \ref{lemm:trivialLocalReps}, it suffices to define $M_z=V$ and $M_\alpha=M_\beta
=\id_V$. 

Let $N$ be a subrepresentation of $M$. Since the representation $V$ is irreducible and $\sigma,\tau$ generate $G$,
the only subspaces of $M_v$ that are invariant under both $M_{\ve{1}}=\rho(\sigma)$ and $M_{\ve{2}}=\rho(\tau)$ are
$0$ and $M_v$ itself, therefore we have $N_v=0$ or $N_v=M_v$.  Since $M_\alpha,M_\beta$ are isomorphisms, in these
two cases we must have $N=0$ or $N=M$, respectively, therefore $M$ is simple.  
\end{example}

The endomorphisms $\phi_1,\dots,\phi_k$ of $M_a$ mentioned above are often all diagonalizable in our examples. This
makes the following well-known fact from linear algebra very useful for subspace analysis: let $V$ be a vector
space and let $\phi$ be a diagonalizable endomorphism of $V$. Suppose $\phi$ has $d$ distinct eigenvalues and
$V=\oplus_{i=1}^d E_{i}$ is the corresponding eigenspace decomposition. Then a subspace $W$ of $V$ is invariant
under $\phi$ if and only if $W$ is \emph{compatible with the eigenspace decomposition} in the sense that
$W=\oplus_{i=1}^d (W\cap E_i)$.  We use this characterization in the following two examples. 

\begin{example}
\label{eg:twoHeavyEdges}
Let $(W,S)$ be the Coxeter system whose Coxeter diagram $G$ is shown on the left of \autoref{fig:twoHeavyEdges},
where $m_1, m_2\in  \Z_{\ge 4}\cup\{\infty\}$. The double quiver $Q$ of $G$ is shown on the right of the same
figure. We construct a representation $M$ in $\rep_K(Q,\cali_f)$ and apply subspace analysis at $b$ to show that
$\rep_K(Q,\calr)$, hence mod-$A$, is not semisimple.
 
\begin{figure}[h!] 
\centering
\begin{tikzpicture} 
\node (0) {$G:$};
\node[main node] [right =0.5cm of 0] (1) {$a$}; 
\node[main node] [right=1cm of 1] (2) {$b$}; 
\node[main node] [right=1cm of 2] (3) {$c$};
\path[draw] 
(1) edge node [above] {$m_1$} (2) 
(2) edge node [above] {$m_2$} (3); 

\node [right=0.5cm of 3]  (r) {$\ra$};
\node [right = 0.5cm of r] (q) {$Q:$};
\node[main node] [right=0.5cm of q]  (4) {$a$};
\node[main node] [right=1.5cm of 4] (5) {$b$}; 
\node[main node] [right=1.5cm of 5] (6) {$c$};

\path[draw] 
(4) edge [bend left = 15,-stealth] node [fill=white, anchor=center, pos=0.5,
inner sep=0.5pt] {\tiny{$\alpha$}} (5) 
(5) edge [bend left = 15,-stealth] node [fill=white, anchor=center, pos=0.5,
inner sep=0.5pt] {\tiny{$\beta$}} (4)
(5) edge [bend left = 15,-stealth] node [fill=white, anchor=center, pos=0.5,
inner sep=0.5pt] {\tiny{$\gamma$}} (6) 
(6) edge [bend left = 15,-stealth] node [fill=white, anchor=center, pos=0.5,
inner sep=0.5pt] {\tiny{$\delta$}} (5);
\end{tikzpicture} 
\caption{} 
\label{fig:twoHeavyEdges}
\end{figure}

We begin by constructing the local representations $M_{\{ \alpha,\beta\}}$ and $M_{\{\gamma,\delta\}}$ based on the
values of $m_1$ and $m_2$, respectively. For $\{\alpha,\beta\}$, first let $M_a=M_b=K^2$. Let $\lambda_2=1$. Take
$\lambda_1=-1$ if $m_1=\infty$, $\lambda=0$ if $m_1=4$, and $\lambda=z$, a root of $\tilde f_{m-1}$ different from
$\lambda_2$, if $4<m_1<\infty$. Next, set $M_\beta$ to be the map given by the matrix $\bm{0 & 0\\ 0 &1}$ if
$m_1=4$ and to be the identity map otherwise, then set $M_\alpha=\bm{\lambda_1 & 1\\ 0 &\lambda_2}$. It is
straightforward to check that $M_{\{\alpha,\beta\}}:=\{M_a, M_b, M_\alpha, M_\beta\}$ forms a local representation:
if $m_1=\infty$, then the relations $r_f(\alpha), r_f(\beta)$ are zero and there is nothing to check; if
$4<m_1<\infty$, then the relations $r_f(\alpha), r_f(\beta)$ are satisfied by Proposition \ref{prop:edgeLoopRep}
because $M_\alpha M_\beta = M_\beta M_\alpha = M_\alpha$, a diagonalizable map whose
eigenvalues are roots of $\tilde f_{m-1}$;  finally, if $m_1=4$, then the relations
$r_f(\alpha)=\alpha\beta\alpha-\alpha, r_f(\beta)=\beta\alpha\beta-\beta$ are satisfied because $M_\alpha M_\beta
M_\alpha=M_\alpha, M_\beta M_\alpha M_\beta=M_\beta$ by direct computation. We may similarly define a local
representation for $\{\gamma,\delta\}$ by letting $M_b=M_c=K^2$, defining numbers $\mu_2,\mu_1$ and the map
$M_\gamma$ based on $m_2$ in the same way we defined $\lambda_2,\lambda_1$ and $M_\beta$ based on $m_1$, and
defining the map $M_\delta$ to be given by the matrix $\bm{\mu_1 & 0\\ 0 & \mu_2}$.  The two local representations
are consistent because they assign the same vector space $K^2$ to the vertex $b$, therefore they can be assembled
to a representation $M\in \rep_K(Q,\cali_f)$. 

Consider the operators $\phi_1:=M_\alpha\circ M_\beta$ and $\phi_2: =M_\delta\circ M_\gamma$ on $M_b$. Then
\[ 
\phi_1= \begin{bmatrix} \lambda_1 & 1 \\ 0 & \lambda_2
\end{bmatrix},\quad \phi_1= \begin{bmatrix} \mu_1 & 0\\ 0 & \mu_2
\end{bmatrix}, 
\]
so the eigenspace decompositions of $M_b$ with respect to $\phi_1,\phi_2$ are given by 
\begin{equation}
\label{eq:MbDecomp}
M_b= \ip{e_1}\oplus
\ip{e_1+(\lambda_2-\lambda_1)e_2}=\ip{e_1}\oplus \ip{e_2}  
\end{equation}
where $\ip{v}$ stands for the span of $v$ for each vector $v$ and $e_1, e_2$ denote the standard basis vectors
$\bm{1\\0}, \bm{0\\1}$ of $K^2$, respectively. Let $N$ be a nonzero subrepresentation of $M$. Then the vector space
$N_b$ must be invariant under both $\phi_1$ and $\phi_2$. Consequently, $N_b$ must be compatible with the
decompositions in Equation \eqref{eq:MbDecomp} in the sense that 
\[
    N_b=(N_b\cap \ip{e_1})\oplus (N_b\cap \ip{e_2})=(N_b\cap\ip{e_1})\oplus
(N_b\cap\ip{e_1 + (\lambda_2 - \lambda_1)e_2}).  
\] 
But each intersection in the above equation is either trivial or of dimension 1, so $N_b$ must be $\ip{e_1}$. This
implies that $N$ cannot have a complement in $M$, so $\mathrm{rep}_K (Q,\cali_f)$ is not semisimple.  
\end{example}

\begin{example}
\label{eg:twoHeavyEdgesSimple}
Let $Q$  and $M$ be as in the previous example.  We modify $M$ to produce an infinite family of simple
representations in $\repqrf$ to be used in Section \autoref{sec:mod-AResults}. To begin, define a representation
$M^x$ for each scalar $x\in K\setminus\{\lambda_1,\lambda_2\}$ as follows: let $M^{x}_a= M^{x}_b=M^{x}_c=K^2,
M^x_\gamma=M_\gamma$ and $M^x_\delta =M_\delta$, then let $M^{x}_\beta = \id$ and let $M^x_{\alpha}$ be the map
given by the matrix 
\[
B_x= \bm{x & x(\lambda_1+\lambda_2-x)-\lambda_1\lambda_2\\ 1 &
\lambda_1+\lambda_2-x}.
\]
As before, this defines local representations $M_{\{\gamma,\delta\}}:=\{M^x_b,M^x_c,M^x_\gamma,M^x_\delta\}$ and
$M_{\{\alpha,\beta\}}:=\{M^x_a,M^x_b,M^x_\alpha, M^x_\beta\}$ for $\{\gamma,\delta\}$ and $\{\alpha,\beta\}$,
respectively.  Moreover, the two local representations are consistent and assemble to a representation $M^x$.  Let
$\phi^x_1:=M_\alpha^x \circ M_\beta^x$ and $\phi^x_2:=M^x_\delta\circ M^x_\gamma$.  The matrix $B_x$ guarantees
that the map $\phi_1^x$ has an eigenvector $v_1:=(x-\lambda_1)e_1+e_2$ with eigenvalue $\lambda_1$ as well as an
eigenvector $v_2:=(x-\lambda_2)e_1+e_2$ with eigenvalue $\lambda_2$, so the eigenspace decompositions of $M^x_b$
with respect to $\phi_1^x$ and $\phi_2^x$ are given by
\[
    M^x_b= \ip{v_1}\oplus \ip{v_2}=\ip{e_1}\oplus \ip{e_2}. 
\]
It follows that in any subrepresentation $N$ of $M^x$ we must have 
\[
    N_b=N_b\cap \ip{v_1}\oplus N_b\cap\ip{v_2}=N_b\cap \ip{e_1}\oplus N_b\cap
    \ip{e_2}. 
\] 
The vectors $v_1,v_2, e_1,e_2$ are pairwise distinct since $\lambda_1\neq \lambda_2, \mu_1\neq \mu_2$ and
$x\not\in\{\lambda_1,\lambda_2\}$, therefore the above equation holds only if $N_b=0$ or $N_b=K^2$.  If $m_2\neq
4$, then $\mu_1\neq 0$ and $M^x_\gamma$ is an isomorphism, therefore we must  have $N=0$ or $N=M^x$, which in turn
implies that $M^x$ is simple. If $m_2= 4$, it is easy to check that $M^x$ contains a simple module $N^x$ such that
$N^x_a=N^x_b=K^2$ and $N^x_c=\ip{e_2}$. Set $N^x=M^x$ when $m_2\neq 4$. Then $N^x$ is a simple representation in
$\repqrf$ for all $x\in K\setminus\{\lambda_1,\lambda_2\}$ regardless of the value of $m_2$. 

Observe that if $x\neq y$ then $N^x\not\cong N^y$, for there cannot exist linear isomorphisms $\phi_a: N^x_a\ra
N^y_a, \phi_b: N^x_b\ra N^y_b$ such that $\phi_b N^x_\alpha = N^y_\alpha \phi_a$ and $\phi_a N^x_\beta= N^y_\beta
\phi_b$ simultaneously: the latter equation holds only if $\phi_a=\phi_b$ as maps from $K^2$ to $K^2$ because
$N^x_\beta=N^y_\beta=\id$, but then the first equation implies that the matrices $B_x, B_y$ are equal, which cannot
happen if $x\neq y$.  
\end{example}

In the next example, we combine the ideas of the last three examples to construct simple representations in a more
flexible way. 

\begin{example}
\label{eg:46simple}
Let $(W,S), G$,  and $Q$ be as in Example \ref{eg:twoHeavyEdges}.  We construct a representation $M\in \repkqr$ in
the case that $m_1\ge 4, m_2\ge 6$ and prove that $M$ contains a simple subrepresentation $N$. 

Consider the irreducible representation $\rho: S_q \ra \mathrm{GL}(V)$ and the elements $\sigma,\tau\in S_q$ from
Example \ref{eg:Miller}. Using minimal polynomials as we did in the proof of Lemma \ref{lemm:decompose}, we may
deduce from the fact $\rho(\sigma)^2=\id_V$ that $\rho(\sigma)$ is a diagonalizable map whose eigenvalues are from
the set $\{-1,1\}$, so with respect to $ \rho(\sigma)$ we have an eigenspace decomposition $V=E_{1}\oplus E_{2}$
where $E_1, E_{2}$ are the eigenspaces for the eigenvalues $1$ and $-1$, respectively.  Similarly, since
$\rho(\tau)^3=\id_V$, the map $\rho(\tau)$ is diagonalizable and its eigenvalues lie in the set
$\{\omega_1,\omega_2,\omega_3\}$ containing the three third roots of unity, therefore we have an eigenspace
decomposition $V=F_{1}\oplus F_2\oplus F_3$ of $V$ where each $F_i$ is the eigenspace for the eigenvalue
$\omega_i$. Note that 0 and $V$ are the only subspaces of $V$ compatible with both these decompositions, because
they are the only subspaces invariant under both $\rho(\sigma)$ and $\rho(\tau)$ by Example \ref{eg:Miller}. 

To define $M$, first set $M_a=M_b=M_c=V$. Next, assign the linear maps $M_\alpha, M_\beta$ based on the value of
$m_1$: if $m_1>4$, then $\tilde f_{m_1-1}$ contains at least two nonzero roots $\lambda_1,\lambda_2$, and we set
\[
  M_\alpha=\id_{E_1}\oplus \id_{E_2}, \quad M_\beta=(\lambda_1\cdot
    \id_{E_2})\oplus (\lambda_2\cdot \id_{E_2})
\]
where the notation means, for example,  that $M_\beta$ restricts to $\lambda_1\cdot \id$ on $E_1$ and to
$\lambda_2\cdot \id$ on $E_2$; if $m_1=4$, we set
\[
    M_\alpha=\id_{E_1}\oplus 0_{E_2}, \quad M_\beta=(\lambda_1\cdot
    \id_{E_2})\oplus 0_{E_2}
\]
where $\lambda_1$ is the unique nonzero root of $\tilde f_{3}$.  Similarly, if $m_2>6$ then $\tilde f_{m_2-1}$ has
at least three nonzero roots $\mu_1,\mu_2,\mu_3$ and we set
\[
    M_\gamma=\id_{F_1} \oplus  \id_{F_2}\oplus
    \id_{F_3}, \quad M_\delta=(\mu_1\cdot \id_{F_1}) \oplus (\mu_2\cdot
    \id_{F_2})\oplus (\mu_3\cdot \id_{F_3}),
\]
while if $m_2=6$ then we set
\[
    M_\gamma=\id_{F_1} \oplus  \id_{F_2}\oplus
    0_{F_3}, \quad M_\delta=(\mu_1\cdot \id_{F_1}) \oplus (\mu_2\cdot
    \id_{F_2})\oplus 0_{F_3}.
\]
where $\mu_1,\mu_2$ are the two nonzero roots of $\tilde f_{m_2-1}$.
By Parts (a) and (b) of Proposition \ref{prop:edgeLoopRep}, the assignments
define a representation in $\repqrf$. Moreover, the eigenspace decompositions
of $M_b$ with respect to the maps $\phi_1=M_\alpha M_\beta$ and
$\phi_2=M_\delta M_\gamma$ coincide with the eigenspace decompositions of $V$
with respect to the maps $\rho(\sigma)$ and $\rho(\tau)$, respectively,
therefore a subrepresentation of $M$ must assign the space $0$ or $V$ to the
vertex $b$. It follows that $M$ contains a simple representation $N$ with
$N_b=V$ and 
\[
N_a=
\begin{cases}
    M_a=E_1\oplus E_2 & \text{if $m_1>4$};\\
    E_{1}& \text{if $m_1=4$},\\
\end{cases}\;
N_c=
\begin{cases}
    M_c=F_1\oplus F_2\oplus F_3 & \text{if $m_1>6$};\\
    F_{1}\oplus F_2 & \text{if $m_1=6$}.\\
\end{cases}\quad
\]

\end{example}

\begin{remark} 
Given a vector space $V$ and operators $\phi_1,\cdots, \phi_k$ on $V$, the study of subspaces of $V$ that are
simultaneously compatible with the eigenspace decompositions of all the operators is closely related to enumerative
geometry and Schubert calculus. For example, if $K=\C$, $V= K^4$, $n=2$ and the operators $\phi_1 , \phi_2$ yield
eigenspace decompositions $M_b = E_1 \oplus E_2$ and $M_b = F_1\oplus F_2$  where $E_1, E_2, F_1, F_2$ all have
dimension $2$, then ``generically'' there exists a subspace $W\se V$ with dimension 2 that is simultaneously
compatible with both $\phi_1$ and $\phi_2$, because a classical result in Schubert calculus asserts that
generically, given four lines in the projective 3-space $\C\mathbb{P}^3$, there are two lines that intersect all
these four lines.  On the other hand, when the dimensions of the eigenspaces of $V$ with respect to $\phi_1,\cdots,
\phi_k$ are known, we can often show that no proper, nontrivial subspace of $V$ can be simultaneously compatible
with the corresponding eigenspace decompositions by certain codimension computations involving products of Schubert
classes. For instance, to construct a simple representation in $\rep_K(\bar Q,\bar \cali_f)$ in Example
\ref{eg:Miller}, it is possible to specify for every positive integer $n$ a representation $M\in \rep_K(\bar Q,\bar
\cali_f)$ in such a way that $\dim (M_b)=6n$, the maps $M_\alpha$ and $M_\beta$ are isomorphisms,
$\phi_1:=M_{\ve{1}}$ is diagonalizable with two eigenspaces $E_1, E_2$ of dimension $3n$, and $\phi_2:=M_{\ve{2}}$
is diagonalizable with three eigenspaces $F_1, F_2,F_3$ of dimension $2n$. A codimension computation using Schubert
calculus guarantees that generically $M_b$ has no subspace compatible with both $M_{\ve{1}}$ and $M_{\ve{2}}$ other
than 0 and $M_b$ itself, so $M$ is simple (generically).  This yields an alternative proof of the existence of
certain simple representations in $\rep_K(\bar Q,\bar\calr)$.  In the interest of space, however, we omit details
of the codimension computation and the necessary background on Schubert calculus. In particular, we will not make
precise what the word ``generically'' means in this paragraph.  
\end{remark} 

We end this subsection by proving a proposition to be used in \autoref{sec:repProofs} under the following setting:
Let $Q$ be the double quiver of a Coxeter diagram $G$ and let $\{f_n\}$ be as defined in Equation
\ref{eq:PowerPoly}.  Suppose that $G$ contains a vertex $v$ which is adjacent to a unique vertex in $G$. Let $u$ be
that unique vertex, let $m=m(u,v)$, and let $\hat G$ be the graph obtained from $G$ by removing the vertex $v$ and
the edge $v-u$.  Let $\hat Q$ be the double quiver of $\hat G$, then let $\hat \cali_f$ be the evaluation ideal of
$\{f_n\}$ in $K \hat Q$. The proposition allows us to ``enlarge'' certain representations in $\repqrf$ to a simple
representation in $\rep_K(\hat Q,\hat\cali_f)$: 

\begin{prop}
    \label{prop:mixingSimples}
    Let $Q,\calr, \hat Q,\hat \calr$ and $u, v$ be as described above.  Suppose that  $\{S(1), S(2), \dots, S(k)\}$
    is a nonempty set of pairwise non-isomorphic simple representations in $\rep_K(\hat Q,\hat \calr)$ and let
    $S=\oplus_{i=1}S(i)$. If $m>3$, then there is a simple representation $M$ in $\repqrf$ such that $M_{a}=S_a$
    for all $a\in \hat Q_0$.  \end{prop}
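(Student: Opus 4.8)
The plan is to let $M$ agree with $S$ on the whole subquiver $\hat Q$ and to re-attach the leaf $v$, so that the only data left to choose are the space $M_v$ together with the maps $M_\alpha\colon S_u\ra M_v$ and $M_\beta\colon M_v\ra S_u$ for the two arrows $\alpha\colon u\ra v$, $\beta\colon v\ra u$ of $Q$. Write $S_u=\bigoplus_{i=1}^k S(i)_u$ and, for $T\se\{1,\dots,k\}$, $S_u^T=\bigoplus_{i\in T}S(i)_u$. Since the $S(i)$ are pairwise non-isomorphic simples, every subrepresentation of $S$ in $\rep_K(\hat Q,\hat\cali_f)$ equals some $\bigoplus_{i\in T}S(i)$; hence any subrepresentation $N\se M$ has $N|_{\hat Q}=\bigoplus_{i\in T}S(i)$ and $N_u=S_u^T$ for some $T$, and the only remaining freedom in $N$ is $N_v$, which must satisfy $M_\alpha(S_u^T)\se N_v$ and $M_\beta(N_v)\se S_u^T$. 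Such an $N_v$ exists if and only if $M_\beta M_\alpha(S_u^T)\se S_u^T$. Consequently $M$ is simple once (i) $M_\alpha$ is surjective (forcing $N_v=M_v$, hence $N=M$, when $T$ is everything), (ii) $M_\beta$ is injective (forcing $N_v=0$, hence $N=0$, when $T=\varnothing$), and (iii) $M_\beta M_\alpha$ leaves no proper non-empty $S_u^T$ invariant. So the whole problem reduces to producing $M_v,M_\alpha,M_\beta$ satisfying (i)--(iii) for which $M_{\{\alpha,\beta\}}$ is moreover a local representation for $\{\alpha,\beta\}$ in the sense of \autoref{sec:repAnalysis}.

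I would build these maps using that $m>3$ makes $\tilde f_{m-1}$ non-constant. Pick a nonzero $w_i\in S(i)_u$ for each $i$ and set $v_0=w_1+\dots+w_k$, so that $v_0\notin S_u^T$ whenever $T\subsetneq\{1,\dots,k\}$. If $m$ is odd then $m\ge5$, so $\tilde f_{m-1}$ has two distinct (necessarily nonzero) roots $\zeta_1\neq\zeta_2$; take $M_v=S_u$, $M_\alpha=\id_{S_u}$, and $M_\beta=\sigma$, where $\sigma$ acts as $\zeta_1$ on $\langle v_0\rangle$ and as $\zeta_2$ on a hyperplane complement $H$ of $\langle v_0\rangle$ chosen so that $H$ contains none of the finitely many subspaces $S(i)_u$. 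Then $M_\alpha M_\beta=M_\beta M_\alpha=\sigma$ is diagonalizable with eigenvalues among the roots of $\tilde f_{m-1}$, so $M_{\{\alpha,\beta\}}$ is a local representation by Proposition \ref{prop:edgeLoopRep}(a); $\sigma$ is invertible, giving (i) and (ii); and no $S_u^T$ with $T$ proper non-empty is $\sigma$-invariant because $v_0\notin S_u^T$ forces $S_u^T\cap\langle v_0\rangle=0$ while $S_u^T\not\se H$ by the choice of $H$, giving (iii). If instead $m$ is even (in particular for $m=4$, where $1$ is the only root of $\tilde f_{m-1}$; the case $m=\infty$ imposes no relation at all and is even easier), I would drop rank: let $H$ be a hyperplane of $S_u$ with $v_0\notin H$ and $S(i)_u\not\se H$ for all $i$, let $P$ be the rank-one projection of $S_u$ onto $\langle v_0\rangle$ along $H$, set $M_v=\operatorname{im}P$, and let $M_\alpha$ be $P$ viewed as a surjection $S_u\twoheadrightarrow M_v$ and $M_\beta$ the inclusion $M_v\hookrightarrow S_u$. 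Then $M_\alpha M_\beta=\id_{M_v}$ and $M_\beta M_\alpha=P$, so the relations $r_f(\alpha)=\tilde f_{m-1}(\alpha\beta)\alpha$ and $r_f(\beta)=\tilde f_{m-1}(\beta\alpha)\beta$ vanish by a direct computation using $\tilde f_{m-1}(1)=0$ and $P^2=P$; (i) and (ii) are clear, and (iii) holds as before. (The case $k=1$, where $S=S(1)$ is already simple, needs none of this: take $M_v=S_u$ with $M_\alpha=M_\beta=\id$, or $M_v=0$ if $S(1)_u=0$.)

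The crux --- and the step I expect to need the most care --- is the linear-algebra input of the second paragraph: producing an operator on $S_u$ whose spectrum is confined to the few values the local relation allows and yet none of whose invariant subspaces is one of the block sums $S_u^T$. The enabling fact is that over a field of characteristic zero a hyperplane avoiding finitely many prescribed proper subspaces always exists; this is exactly what makes the ``mixing'' possible once $\tilde f_{m-1}$ is non-constant (for odd $m$ there are two distinct roots to play $\zeta_1,\zeta_2$ against each other, and for even $m$ the root $1$ suffices together with the rank drop). One point worth flagging is that the construction uses each $S(i)_u\neq0$ --- this is what lets us choose the $w_i$ and ensures $S_u^T\neq0$ for non-empty $T$ --- which holds in the setting of \autoref{sec:repProofs} in which the proposition is applied.
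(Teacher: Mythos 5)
Your proof is correct, and while the underlying construction is a close cousin of the paper's, the logical organization is genuinely different. The paper also keeps $M$ equal to $S$ on $\hat Q$, takes $M_v=K$ when $m=4$ with $M_\alpha,M_\beta$ given by the all-ones row $X$ and a column $Y$ with all entries nonzero and $XY=1$, and for $m\ge 5$ uses the reflection $I_d-2YX$ on $S_u$; it then proves simplicity bottom-up: from any nonzero $x\in N_u$, Lemma \ref{lemm:DiagonalSimples} extracts a basis vector $e\in N_u$, the vector $M_\beta M_\alpha(e)$ has all coordinates nonzero, and a second application of Lemma \ref{lemm:DiagonalSimples} forces $N_u=S_u$. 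You instead classify subrepresentations top-down, using that every submodule of the multiplicity-free semisimple $S$ is a partial sum $\oplus_{i\in T}S(i)$, which isolates the three transparent conditions (i)--(iii) on $M_\alpha,M_\beta$; your rank-one projection onto $\langle v_0\rangle$ along $H$ is exactly the coordinate-free form of the paper's $YX$, with the hyperplane-avoidance lemma replacing the explicit choice of $X$. Two points in your favour are worth recording. First, taking $\zeta_1\neq\zeta_2$ to be genuine roots of $\tilde f_{m-1}$ in the odd case is more robust than the paper's appeal to Corollary \ref{lemm:trivialLocalReps}(b), whose proof asserts $x^2-1\mid \tilde f_{m-1}$ for all $m\ge 5$; this fails, e.g., for $m=7$, where $\tilde f_6=x^3-1$, so your route sidesteps a small gap. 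Second, your flag that one needs $S(i)_u\neq 0$ for every $i$ is exactly right: if some $S(i)_u=0$ with $k\ge 2$, then $S(i)$ extended by zero at $v$ is a proper nonzero subrepresentation of any $M$ restricting to $S$, so the hypothesis is genuinely needed; the paper uses it silently (its Lemma \ref{lemm:DiagonalSimples} argument runs over the bases $B_i$, which must be nonempty), and it does hold in every application in \autoref{sec:repProofs}.
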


We prove the proposition via subspace analysis at the vertex $u$, by using the following lemma:

\begin{lemma} \label{lemm:DiagonalSimples} 
    Let $\Lambda$ be an arbitrary ring. Let $\{S(1), \ldots, S(k)\}$ be a set of pairwise non-isomorphic simple
right $\Lambda$-modules, let $S=\oplus_{i=1}^k S(i)$, and let $x = \sum_{1 \le i \le k}x_i\in S$ where $x_i\in
S(i)$ for each $i$.  If $x_i\neq 0$ for all $1\le i\le k$, then the submodule generated by $x$ equals $S$.
\end{lemma}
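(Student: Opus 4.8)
The plan is to argue by induction on $k$, the number of simple summands. The base case $k=1$ is immediate: since $S(1)$ is simple and $x_1\neq 0$, the submodule generated by $x_1$ is all of $S(1)=S$.

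For the inductive step I would write $S=S'\oplus S(k)$ with $S'=\bigoplus_{i=1}^{k-1}S(i)$, let $\pi_k\colon S\to S(k)$ and $\pi'\colon S\to S'$ be the projections, and let $N$ be the submodule generated by $x$. Applying $\pi'$, the image $\pi'(N)$ is the submodule of $S'$ generated by $y:=\sum_{i=1}^{k-1}x_i$; since $S'$ is a direct sum of $k-1$ pairwise non-isomorphic simple modules and every component of $y$ is nonzero, the inductive hypothesis gives $\pi'(N)=S'$. Similarly $\pi_k(N)$ is the submodule of $S(k)$ generated by $x_k\neq 0$, hence equals $S(k)$ by simplicity; in particular $\pi_k|_N$ is a nonzero homomorphism $N\to S(k)$.

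Next I would examine $N\cap S(k)$, which is a submodule of the simple module $S(k)$ and so equals either $S(k)$ or $0$. If $N\cap S(k)=S(k)$, then $S(k)\subseteq N$, and combined with $\pi'(N)=S'=S/S(k)$ a routine argument yields $N=S$: for $s\in S$ choose $n\in N$ with $\pi'(n)=\pi'(s)$, so $s-n\in\ker\pi'=S(k)\subseteq N$ and thus $s\in N$. The remaining case $N\cap S(k)=0$ must be excluded: there $\pi'|_N\colon N\to S'$ would be an isomorphism, so $N\cong\bigoplus_{i=1}^{k-1}S(i)$; but a nonzero map between simple modules is an isomorphism, so $S(i)\not\cong S(k)$ forces $\mathrm{Hom}(S(i),S(k))=0$ for all $i<k$ and hence $\mathrm{Hom}(N,S(k))=0$, contradicting $\pi_k|_N\neq 0$. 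Therefore $N\cap S(k)=S(k)$ and $N=S$, closing the induction.

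I expect the only delicate point — and the place where the pairwise-non-isomorphism hypothesis is indispensable — to be the elimination of the case $N\cap S(k)=0$: without that hypothesis $N$ could be a ``diagonal'' submodule that surjects onto every factor while being equal to none of them, and it is precisely the Schur-type vanishing $\mathrm{Hom}(S(i),S(k))=0$ that rules this out. Everything else is routine.
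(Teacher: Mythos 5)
Your proof is correct, and it takes a genuinely different route from the paper's, although both arguments are inductions on $k$. You argue structurally: project the submodule $N$ generated by $x$ onto $S'=\oplus_{i<k}S(i)$ and onto $S(k)$, note that $N\cap S(k)$ is $0$ or $S(k)$ by simplicity, and eliminate the case $N\cap S(k)=0$ via Schur's lemma, i.e.\ $\mathrm{Hom}(S(i),S(k))=0$ for $i<k$ — which is exactly, and only, where the pairwise non-isomorphism hypothesis enters. The paper instead works with elements and annihilators: it sets $I_i=\mathrm{Ann}(x_i)$, observes that these are \emph{distinct} maximal right ideals (again using non-isomorphism), chooses $r\in I_1\setminus I_2$ so that $xr$ has strictly fewer nonzero components but still at least one, applies the inductive hypothesis to get the sub-sum $\oplus_{j\in J}S(j)$ inside $x\Lambda$, and then applies it once more to the complementary sub-sum. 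Your version is the more standard module-theoretic argument and isolates the role of the hypothesis in a single clean Hom-vanishing statement; the paper's is more elementary and hands-on, explicitly producing generators of each summand inside $x\Lambda$ without invoking projections or Hom groups. Both are complete proofs.
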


\begin{proof}
    We use induction on $k$. The claim clearly holds when $k=1$. For $k>1$, let $I_i=\mathrm{Ann}(x_i):=\{r\in
    \Lambda: x_ir=0\}$ for each $i$. Then $I_1,\dots, I_k$ are distinct maximal right ideals of $\Lambda$ since
    $S(1), \dots, S(k)$ are pairwise non-isomorphic simple right $\Lambda$-modules. Let $r\in I_1\setminus I_2$ and
    let $y_i=x_ir$ for all $i$. Then $y_1=0$ and $y_2\neq 0$. Let $J=\{1\le j\le k:y_j\neq 0\}$ and let
    $y=\sum_{j\in J}y_j$.  Applying the inductive hypothesis on the module $S':=\oplus_{j\in J}S(j)$, we conclude
    that $S'\se y\Lambda$. Furthermore, since $y=\sum_{j\in J}y_j=\sum_{1\le j\le k}y_j=\sum_{1\le j\le k}x_jr=xr$,
    we have $y\Lambda \se x\Lambda$. It follows that $S'\se x\Lambda$. In particular, we have $\sum_{j\in J}x_j\in
    x\Lambda$ and hence $\sum_{j\notin J}x_j\in x\Lambda$.  By the inductive hypothesis, the element $\sum_{j\notin
    J}x_j$ generates the module $S'':=\oplus_{j\notin J}S(j)$, so $S''\se x\Lambda$. It follows that $S=S'\oplus
    S''\se x\Lambda$.   
\end{proof}

\begin{proof}[Proof of Proposition \ref{prop:mixingSimples}]
Denote the arrows $u\ra v$ and $v\ra u$ in $Q$ by $\alpha$ and $\beta$, respectively.  By
\autoref{sec:repAnalysis}, to construct a representation $M\in \repqrf$ it suffices to extend $S$ by a local
representation $M_{\{\alpha,\beta\}}=\{M_u, M_v, M_\alpha, M_\beta\}$ for the set $\{\alpha,\beta\}$ such that
$M_u=S_u$.  We do so by setting $M_u=S_u, M_v=K$ and setting $M_\alpha, M_\beta$ as follows: 

\begin{enumerate}
\item 
    If $m=4$, let $d=\dim(S_u)$ and let $B_S=B_1\sqcup B_2\sqcup\cdots B_k$ be a basis of $S_u$ where $B_i$ is a
    basis of $S(i)_u$ for all $1\le i\le k$.  Consider the $1\times d$ and $d\times 1$ matrices
\[
    X= \bm{1& 1& \dots& 1}, \quad Y=\bm{d& -1 & \dots & -1}^T.
\]
    Define $M_\alpha: S_u\ra K$ and $M_\beta: K\ra S_u$ to be the maps whose matrices with respect to $B_S$ and
    $\{1\}$ (considered a basis of $K$) are given by $X$ and  $Y$, respectively. Since $\papb=\id$, the assignments
    for $M_\alpha$ and $M_\beta$ satisfy the relations $r_f(\alpha)=\alpha\beta\alpha-\alpha$ and
    $r_f(\beta)=\beta\alpha\beta-\beta$, so $M$ is indeed a representation in $\repqrf$.  
\item 
    If $m\ge 5$, then let $d$ and $X, Y$ be as before, set $M_\alpha=\id$, and define $M_\beta$ to be the whose
    matrix with respect to $B_S$ is $I_d-2YX$. These assignments ensure that $(M_\alpha
    M_\beta)^2=(\pbpa)^2=M_\beta^2=\id$, so they define a representation in $\repqrf$ by Lemma
    \ref{lemm:trivialLocalReps}.(b).  
\end{enumerate} 
The representation $M$ satisfies the condition that $M_a=S_a$ for all $a\in \hat Q_0$ by definition, so it remains
to show that $M$ is simple.  To this end, let $N$ be a subrepresentation of $M$ in $\repcqr$, and let
$x=(x_i)_{1\le i\le k}\in $ be any nonzero vector in $S_u=\oplus_{i=1}^k S(i)_u$. Then the set $J:=\{1\le j\le n:
x_j\neq 0\}$ is nonempty. Invoke the equivalence between the categories $\rep_K(\hat Q,\hat\cali_f)$ and mod-$K\hat
Q/\hat\cali_f$ to identify $S(1), \cdots, S(k)$ and $S$ as modules of the algebra $K \hat Q/\hat\cali_f$. Then
Lemma \ref{lemm:DiagonalSimples} implies that the submodule of generated by $x$ in $S$ must contain the direct sum
$\oplus_{j\in J} S(j)$. Invoking the same equivalence again, we conclude that $N_u$ contains a basis vector $e$
from the basis $B_j$ of $S(j)$ for some $j\in J$. By direct computation, the element $x'=\pbpa(e)\in M_u$ must have
nonzero entries at all coordinates, therefore $x'$ generates all of $M_u$ in $N$, i.e., we have $N_u=M_u$, by Lemma
\ref{lemm:DiagonalSimples}. Since $S(1),\cdots, S(k)$ are simple and $M_\alpha$ is surjective, it follows that $M$
is simple.  
\end{proof}

\section{\texorpdfstring{Results on mod-$A$}{}}
\label{sec:mod-AResults}
We maintain the setting of Section \ref{sec:A-modTools} and study the category mod-$A$ in this section. Recall that
$A=A_K=K\otimes_\Z J_C$ where $K$ is an algebraically closed field with characteristic zero and $J_C$ is an
irreducible Coxeter system $(W,S)$ with Coxeter diagram $G$ and subregular cell $C$. 

\subsection{Results}
\label{sec:repResults}
Our first main result characterizes in terms of the Coxeter diagram $G$ when mod-$A$ is semisimple, as well as when
mod-$A$ \emph{has finitely many simples}, i.e., when it contains finitely many simple modules up to isomorphism.

\begin{thm} \label{thm:repThm1} 
The following conditions are equivalent: 
\begin{enumerate} 
\item The graph $G$ is a tree, has no edge with infinite weight, and has at most one heavy edge.
\item The category mod-$A$ is semisimple.
\item The category mod-$A$ has finitely many simples. 
\end{enumerate} 
\end{thm}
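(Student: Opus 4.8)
The plan is to prove the three implications $(a)\Rightarrow(b)$, $(a)\Rightarrow(c)$, and $\neg(a)\Rightarrow \neg(b)\wedge\neg(c)$. Since $(a)\Rightarrow(b)\wedge(c)$ and $\neg(a)\Rightarrow\neg(b)\wedge\neg(c)$, we then have $(a)\Leftrightarrow(b)$ and $(a)\Leftrightarrow(c)$, hence $(a)\Leftrightarrow(b)\Leftrightarrow(c)$; concretely this runs $(b)\Rightarrow(a)\Rightarrow(c)$ and $(c)\Rightarrow(a)\Rightarrow(b)$.

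The implication $(a)\Rightarrow(b)$ is exactly Proposition~\ref{prop:contractToDihedral}. For $(a)\Rightarrow(c)$ I would, starting from the double quiver $Q$ of $G$, repeatedly contract simple leaf edges (each such pair is contractible, so Theorem~\ref{thm:MoritaEquivalence} applies; cf.\ Remark~\ref{rmk:ignoreSimpleEdges}), using that a tree with at most one heavy edge and at least two edges always has a simple leaf edge, until $G$ is reduced either to its unique heavy edge or, if $G$ has none, to a single vertex. In the first case $A$ is Morita equivalent to $KQ_e/\cali_f$ for a two-vertex generalized double quiver $Q_e$, which by Theorem~\ref{thm:ArtinWedderburn} is a finite product of copies of $K$ and of $M_{2\times 2}(K)$; in the second case $A$ is Morita equivalent to $K$. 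Either way the Morita-equivalent algebra has finitely many simple modules, and Morita equivalence puts the isomorphism classes of simple modules of mod-$A$ in bijection with these, so $(c)$ holds.

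The heart of the proof is $\neg(a)\Rightarrow\neg(b)\wedge\neg(c)$. Assuming $\neg(a)$, I would first contract \emph{every} simple edge of $G$ via Theorem~\ref{thm:MoritaEquivalence}, reducing up to Morita equivalence to a generalized double quiver $\bar Q$ whose diagram $\bar G$ has no simple non-loop edge (contractions may create loops and multiple edges, which is harmless). A short graph-theoretic analysis then shows that $\bar G$ must contain one of a few \emph{bad local configurations}: (A) a vertex incident to two heavy edges, which occurs if $G$ has at least two heavy edges or contains a cycle through a heavy edge; (B) a vertex carrying a pair of dual loops of finite heavy weight, arising from the collapse of a multi-edge; (C) a vertex carrying a pair of dual loops of weight $3$, arising from the collapse of a cycle of simple edges --- precisely the Laurent-polynomial situation of Example~\ref{eg:LaurentPolynomials} and Proposition~\ref{prop:Tk}; or (D) a loop or non-loop edge of infinite weight. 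For each configuration I would produce, first on the local sub-configuration and then extended to all of $\bar Q$ by trivial local representations $(K^n,K^n,\id,\id)$ (legitimate by Corollary~\ref{lemm:trivialLocalReps} regardless of the weights of the remaining arrows), both a representation admitting a subrepresentation with no complement (witnessing $\neg(b)$) and an infinite family of pairwise non-isomorphic simple representations (witnessing $\neg(c)$); the verifications are subspace analyses at the distinguished vertex. Configuration (A) and its minor variants reuse Examples~\ref{eg:twoHeavyEdges} and \ref{eg:twoHeavyEdgesSimple} essentially verbatim; (B) uses the same eigenspace bookkeeping with $M_v=K$ and a loop acting by a scalar parameter $\lambda\in K^\times$ for the simples, and $M_v=K^2$ with a loop acting as a unipotent Jordan block for non-semisimplicity (one may also invoke Example~\ref{eg:Miller}); for (C) and (D) the local evaluation ideal is $\ip{\alpha\beta-e_v,\beta\alpha-e_v}$ (so one computes with $K[t,t^{-1}]$-modules, e.g.\ $K[t,t^{-1}]/(t-\lambda)$ and $K[t,t^{-1}]/(t-1)^2$) or is zero (so one computes with modules over a free-algebra quotient, e.g.\ $(K,K^2,(1,0)^{T},0)$ rendered consistent by matching dimensions). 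Since each trivial local representation contributes only identity operators at the distinguished vertex, it leaves the subspace analysis, hence the conclusions of the cited examples, intact. If one prefers, Proposition~\ref{prop:mixingSimples} can be used instead to peel heavy leaf edges off $\bar G$, reducing to a ``core'' carrying the bad configuration and propagating the infinitude of simples back up.

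The step I expect to be the main obstacle is this last case analysis --- not a single computation but the bookkeeping of (i) checking that $\neg(a)$ genuinely forces one of the configurations above after contracting all simple edges, and (ii) propagating \emph{non-semisimplicity} from a local configuration to all of $\bar Q$. One cannot simply extend a module by zero on the unused vertices: because $0$ is not a root of $\tilde f_{m-1}$, the relation $r_f(\beta)$ at an arrow $\beta$ meeting a zeroed-out vertex would force a nonzero scalar to act as zero. This is exactly why the extension must be by trivial (identity) local representations rather than by zero, and why one must verify that the candidate complement of the distinguished subrepresentation remains obstructed at the distinguished vertex after the extension.
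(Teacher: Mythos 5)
Your proposal is correct and follows the same skeleton as the paper: $(a)\Rightarrow(b)$ via Proposition \ref{prop:contractToDihedral}, and the converse directions via the contrapositives, realized by the explicit constructions of \autoref{sec:repAnalysis} (the paper isolates these as Propositions \ref{prop:bImpliesA} and \ref{prop:cImpliesA}). Two points of genuine divergence are worth recording. First, for $(a)\Rightarrow(c)$ the paper does not contract down to the dihedral case and count via Theorem \ref{thm:ArtinWedderburn}; it instead cites the known fact that condition (a) is equivalent to finiteness of the subregular cell $C$, so that $A$ is finite dimensional and semisimplicity already forces finitely many simples. Your route is more self-contained (it needs no input from \cite{LusztigSubregular}), at the cost of one extra observation (every tree with at most one heavy edge and at least two edges has a simple leaf edge). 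Second, for $\neg(a)$ the paper does not first contract all simple edges and enumerate local configurations: for a cycle or an infinite-weight edge it works directly on the original double quiver, putting a Jordan block $J_x$ on one arrow, identities elsewhere, and running the subspace analysis against the composite operator $M_C$ around the cycle (respectively against $M_\beta M_\alpha$); only the two-heavy-edge case is handled after contraction, exactly as in your configuration (A) via Examples \ref{eg:twoHeavyEdges} and \ref{eg:twoHeavyEdgesSimple}. Your post-contraction taxonomy (A)--(D) is equivalent in substance --- your loop configurations (B), (C) are just the contracted images of the paper's cycle cases --- but note that the attribution in (A) is slightly off: a cycle through a heavy edge collapses to your configuration (B), not (A); since you treat (B) anyway, nothing is lost. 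You also correctly identify the one trap in the extension step, namely that one must extend by identity local representations rather than by zero because $0$ is not a root of $\tilde f_{m-1}$; this is precisely how the paper's proofs proceed.
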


Our second main result gives a similar characterization of when mod-$A$ \emph{has bounded simples} in the sense
that there exists an upper bound on the dimensions of the simple modules of mod-$A$. Since the simple modules of
mod-$A$ are certainly bounded if there are only finitely many of them, we start with the assumption that the
conditions of Theorem \ref{thm:repThm1} do not hold:

\begin{thm} \label{thm:repThm2} 
Suppose that $G$ contains a cycle or has an edge with infinite weight or has at
least two heavy edges.  Then the dimensions of the simple modules of mod-$A$
are bounded above if and only if one of the following mutually exclusive
conditions holds:
\begin{enumerate} 
    \item $G$ contains a unique cycle, and all edges in $G$ are simple.
    \item $G$ is a tree and contains exactly two heavy edges; moreover, each
    of those two edges has weight 4 or weight 5.
\end{enumerate} 
\end{thm}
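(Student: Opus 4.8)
The plan is to prove both implications by using the quiver contractions of \autoref{sec:A-modTools} to reduce mod-$A$ to one of a short list of explicit algebras, and then analysing each of those algebras with the tools of \autoref{sec:repAnalysis}. A preliminary observation I would record is that the property ``the dimensions of the simple modules are bounded above'' is invariant under the category equivalences mod-$A\simeq\rep_K(Q^{(i)},\cali_f^{(i)})$ furnished by Theorems \ref{thm:JcIso}, \ref{thm:IsoQuotients} and \ref{thm:MoritaEquivalence}: such an equivalence is implemented by a progenerator $P$ that is a direct summand of some $A^{\oplus N}$, and every simple module of $\mathrm{End}_A(P)$ has the form $\mathrm{Hom}_A(P,S)\hookrightarrow S^{\oplus N}$ for a simple $A$-module $S$, so a bound $d$ on one side yields the bound $Nd$ on the other.

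For the ``if'' direction I would treat cases (a) and (b) separately. In case (a), $G$ has a unique cycle and only simple edges; contracting pendant edges one by one (Remark \ref{rmk:ignoreSimpleEdges}) reduces $G$ to the cycle $C_n(3)$, and then Proposition \ref{prop:Tk} identifies the algebra up to Morita equivalence with $K[t,t^{-1}]$, all of whose simple modules are one-dimensional. In case (b), $G$ is a tree with exactly two heavy edges, each of weight $4$ or $5$; contracting every simple edge reduces $G$ to a path $a-b-c$ with $m(a,b),m(b,c)\in\{4,5\}$, whose double quiver I would analyse by subspace analysis at $b$. Since $m\le 5$ each polynomial $\tilde f_{m-1}$ has at most two roots, so by Proposition \ref{prop:edgeLoopRep} the diagonalizable operators $M_\alpha M_\beta$ and $M_\delta M_\gamma$ on $M_b$ each have at most two distinct eigenvalues; a short linear-algebra argument (a dimension count, and in the balanced case a transversality argument in a Grassmannian) shows that two such operators on a space of dimension $\ge 3$ always admit a proper nonzero subspace compatible with both eigenspace decompositions, so a simple representation has $\dim M_b\le 2$; the subrepresentations $(\ker M_\alpha,0,0)$ and $(0,0,\ker M_\delta)$ then show that for a simple $M$ either $M$ is one-dimensional or $M_\alpha,M_\delta$ are injective, whence $\dim M_a,\dim M_c\le\dim M_b\le 2$. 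Thus mod-$A$ has simples of dimension at most $6$. (Equivalently one may further contract the weight-$5$ edges and recognise the outcome directly; e.g. two weight-$5$ edges give the group algebra of $C_2\ast C_2$, whose simples have dimension $\le 2$.)

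For the ``only if'' direction I would argue by contraposition: assuming the hypothesis of the theorem and that neither (a) nor (b) holds, I must exhibit simple modules of unbounded dimension. The outstanding cases are: $G$ has two independent cycles; $G$ has a cycle together with a heavy edge; $G$ is a tree with two heavy edges, at least one of weight $\ge 6$ or $\infty$; and $G$ is a tree with at least three heavy edges. In each case I would contract $G$ as far as possible -- all simple edges and all odd-weight heavy edges -- to a small generalized double quiver $\bar Q$, and then build a family of pairwise non-isomorphic simple representations of $(\bar Q,\bar\cali_f)$ of unbounded dimension. The building blocks are Example \ref{eg:46simple} (the irreducible representations of $S_q$, available whenever $G$ contains a path $a-b-c$ with $m(a,b)\ge 4$ and $m(b,c)\ge 6$), Examples \ref{eg:Miller}, \ref{eg:LaurentPolynomials} and \ref{eg:freeProduct} together with standard facts about free products of finite groups and wild path algebras (covering the single-vertex quivers with several loops produced by the cycle cases, such as $T_k$ with $k\ge 2$ or group algebras $C_{k_1}\ast\cdots\ast C_{k_r}$), and Proposition \ref{prop:mixingSimples}, used to enlarge a simple module supported on a contracted subdiagram to a simple module of mod-$A$ when extra pendant heavy edges remain.

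I expect the ``only if'' direction, and within it the cycle cases, to be the main obstacle: because contractions create loops and parallel arrows, one must correctly identify the algebra $\bar Q$ produces (typically a group algebra of a free product, or a wild path algebra) and genuinely construct an unbounded family of simple modules for it -- for instance, a triangle with one weight-$4$ edge contracts to $K\langle x,y\rangle/(xyx-x,\ yxy-y)$, for which the bounded-eigenvalue count of case (b) is unavailable and an explicit family must be produced. Organising the cases so that every $G$ satisfying the hypothesis but neither (a) nor (b) is covered, and checking that the local constructions glue (via Proposition \ref{prop:mixingSimples} and iterated contraction) to global simples of mod-$A$, is the technical core of the proof.
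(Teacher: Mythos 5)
Your proposal follows essentially the same route as the paper's proof, which splits exactly as you do into an ``if'' half (Proposition \ref{prop:Thm2if}) and a contrapositive ``only if'' half (Proposition \ref{prop:Contrapositive}), both executed by contracting $G$ to a small generalized double quiver and then invoking the constructions of \autoref{sec:repAnalysis}: Proposition \ref{prop:Tk} for the pure cycle, the $S_q$-representations of Examples \ref{eg:Miller} and \ref{eg:46simple}, and Proposition \ref{prop:cImpliesA} combined with Proposition \ref{prop:mixingSimples} to enlarge simples across pendant heavy edges.

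Two points of comparison. First, in the ``if'' direction for case (b), your claim that two diagonalizable operators on $M_b$, each annihilated by a quadratic with distinct roots, always leave a common nonzero subspace of dimension at most $2$ invariant is correct (after an affine rescaling the two operators generate a quotient of the group algebra of $C_2*C_2$, all of whose irreducibles have dimension at most $2$); the paper reaches the same bound $\dim M\le 6$ by the dimension count you mention together with an explicit block Jordan-form computation in the balanced case, so the two arguments are interchangeable. Second, the one construction you flag but do not supply --- an unbounded family of simples for $K\langle x,y\rangle/\langle xyx-x,\ yxy-y\rangle$, the contraction of a cycle containing a weight-$4$ edge --- is precisely case (a)(ii) of the paper's proof of Proposition \ref{prop:Contrapositive}: on $K^n$ take $M_\alpha$ and $M_\beta$ to be the nilpotent raising and lowering operators $e_i\mapsto e_{i+1}$ and $e_i\mapsto e_{i-1}$; these satisfy the two relations, and any nonzero vector generates all of $K^n$ under them, so the representation is simple of dimension $n$. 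With that construction inserted your plan coincides with the paper's proof; everything else you list (including the Morita-invariance of boundedness, which the paper uses silently) matches the published argument.
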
 
\noindent Here and henceforth, a \emph{cycle} in a graph means a tuple $C=(v_1,v_2,\dots, v_{n})$ of $n\ge 3$
vertices in the graph such that $v_1-v_2, v_2-v_3, \dots,v_{n-1}-v_n, v_n-v_1$ are all edges in $G$.

Note that Theorems \ref{thm:repThm1} and \ref{thm:repThm2} has the following consequence:

\begin{remark}
\label{rmk:freeProductRep}
Recall from Example \ref{eg:freeProduct} that for every tuple $\mathbf{k}=(k_1,\dots,k_n)$ of positive integers,
the algebra $A_{\mathbf{k}}=\ip{x_j:1\le j\le n, x_j^{k_j}=1}$ is isomorphic to the group algebra of the free
product $C_{\mathbf{k}}=C_{k_1}*\dots*C_{k_n}$ where each $C_{k_i}$ is the cyclic group of order $k_i$. Note that
if $k_j=1$ for some $j$ then $C_{k_j}$ is the trivial group and makes trivial contribution to the free product in
the sense that $C_\mathbf{k}\cong C_{k_1}*\dots*C_{k_{j-1}}*C_{k_{j+1}}*\dots*C_{k_n}$, so we assume from now on
that $k_j>1$ for all $1\le j\le n$. In the example, we showed that $A_\mathbf{k}$ is Morita equivalent to the
algebra $A$ associated with a Coxeter system whose Coxeter diagram is a tree and has a heavy edge of weight
$m_j:=2k_j+1$ for each $1\le j\le n$; in particular, under the assumption that $k_j>1$ for all $j$, the weight
$m_j$ is an odd number greater than 3, and we have $m_j=5$ if and only if $k_j=2$. Theorems \ref{thm:repThm1} and
\ref{thm:repThm2} now imply the following result:
    
\begin{prop}
\label{prop:freeProductRep}
Suppose $\mathbf{k}=(k_1,\dots, k_n)$ where $k_i\in \Z_{> 1}$ for all $1\le i\le n$, and
let mod-$A_{\mathbf{k}}$ be the category of finite dimensional right modules of
$A_\mathbf{k}$.         

\begin{enumerate}
\item The category mod-$A_\mathbf{k}$ is semisimple if and only it contains
    finitely many isomorphism classes of simple modules. Moreover, these two
    conditions are satisfied if and only if $n=1$, i.e., if and only if
    $C_{\mathbf{k}}$ has a single factor and is a finite cyclic group.
\item Suppose the category mod-$A$ has infinitely many pairwise non-isomorphic
    simple modules. Then the simple modules of mod-$A_{\mathbf{k}}$ have
    bounded dimensions if and only if $\mathbf{k}=(2,2)$, i.e., if and only if
    $C_\mathbf{k}$ is isomorphic to the free product $C_{2}* C_2$.
\end{enumerate}
\end{prop}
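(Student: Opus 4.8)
The plan is to obtain Proposition~\ref{prop:freeProductRep} as a direct consequence of the Morita equivalence recorded in Example~\ref{eg:freeProduct} together with Theorems~\ref{thm:repThm1} and~\ref{thm:repThm2}. Recall from that example and from Remark~\ref{rmk:freeProductRep} that, under the standing hypothesis $k_j>1$ for all $j$, the algebra $A_{\mathbf{k}}$ is Morita equivalent to the algebra $A$ attached to the Coxeter system $(W,S)$ with $S=\{0,1,\dots,n\}$, $m(0,j)=m_j:=2k_j+1$ for $1\le j\le n$, and $m(i,j)=2$ for $1\le i<j\le n$. Its Coxeter diagram $G$ is therefore the star with center $0$ and outer vertices $1,\dots,n$, where the edge $0-j$ carries weight $m_j\ge 5$; in particular $G$ is a tree, has no edge of infinite weight, has every edge heavy, and has an edge of weight exactly $5$ precisely for those $j$ with $k_j=2$. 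Since a Morita equivalence induces a bijection on isomorphism classes of simple modules and distorts dimensions only by bounded multiplicative factors, the three properties in question---semisimplicity, finiteness of the set of simples up to isomorphism, and existence of a uniform bound on the dimensions of the simples---each hold for mod-$A_{\mathbf{k}}$ if and only if they hold for mod-$A$. So it suffices to read these properties off from the star $G$.

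For part (a), I would apply Theorem~\ref{thm:repThm1} to $G$. As $G$ is always a tree with no infinite-weight edge, conditions (a)--(c) of that theorem are equivalent to ``$G$ has at most one heavy edge'', i.e.\ to $n\le 1$, i.e.\ (since $n\ge 1$) to $n=1$. Pulling this back along the Morita equivalence, mod-$A_{\mathbf{k}}$ is semisimple iff it has finitely many simples iff $n=1$, and in that case $C_{\mathbf{k}}=C_{k_1}$ is the finite cyclic group, as claimed. (As a sanity check, when $n=1$ the diagram is a single heavy edge of odd weight $m_1$, and Theorem~\ref{thm:ArtinWedderburn}(a) confirms that mod-$A$ is then semisimple with exactly $k_1=(m_1-1)/2$ simple modules.)

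For part (b), part (a) places us in the regime $n\ge 2$, so $G$ has at least two heavy edges and Theorem~\ref{thm:repThm2} applies. Its alternative (a) is impossible here because a tree contains no cycle, so the dimensions of the simples of mod-$A$ are bounded if and only if alternative (b) holds: $G$ must be a tree with exactly two heavy edges---forcing $n=2$---each of weight $4$ or $5$. Since every $m_j=2k_j+1$ is odd, weight $4$ is excluded, so this amounts to $m_1=m_2=5$, equivalently $k_1=k_2=2$, equivalently $\mathbf{k}=(2,2)$ and $C_{\mathbf{k}}\cong C_2*C_2$. Transporting back through the Morita equivalence yields part (b).

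The whole argument is a translation through the dictionary of Example~\ref{eg:freeProduct}, so I do not anticipate a genuine obstacle. The only step needing more than a citation is the observation that ``boundedness of the dimensions of the simple modules'' is a Morita invariant; this follows at once from the fact that the functor realizing the equivalence is of the form $-\otimes_A P$ for a progenerator $P$, which changes dimensions by a factor bounded above and below in terms of $P$ alone, and likewise for its inverse.
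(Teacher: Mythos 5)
Your proposal is correct and follows exactly the paper's route: the paper derives Proposition \ref{prop:freeProductRep} in Remark \ref{rmk:freeProductRep} by combining the Morita equivalence of Example \ref{eg:freeProduct} (the star-shaped tree with heavy edges of odd weights $m_j=2k_j+1\ge 5$, so that $m_j=5$ iff $k_j=2$) with Theorems \ref{thm:repThm1} and \ref{thm:repThm2}, just as you do. Your added justification that boundedness of simple dimensions is Morita invariant is a point the paper itself asserts without proof, so nothing is missing.
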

\end{remark}

Let us explain our strategy for proving Theorems \ref{thm:repThm1} and \ref{thm:repThm2}.  For Theorem
\ref{thm:repThm1}, first recall that (a) implies (b) by Proposition \ref{prop:contractToDihedral}, thanks to simple
graph contractions. It is well-known that Condition (a) is equivalent to the condition that the cell $C$ is finite
(see \cite{LusztigSubregular}), therefore (a) also implies (c), since $\dim(A)=\abs{C}$ and a finite dimensional
semisimple algebra has finitely many simple modules. To prove the theorem, it remains to prove that (b) implies (a)
and that (c) implies (a). We will prove the contrapositives of these two implications:

\begin{prop} \label{prop:bImpliesA} 
    If $G$ contains a cycle or has an edge with infinite weight or has at least
    two heavy edges, then mod-$A$ contains a module which is not semisimple.
\end{prop}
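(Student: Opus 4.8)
The plan is to treat the three alternatives in the hypothesis in turn, exhibiting in each case an explicit finite--dimensional representation $M$ of the double quiver $Q$ of $G$ lying in $\mathrm{rep}_K(Q,\cali_f)$ (with $\{f_n\}$ the family of Equation \eqref{eq:PowerPoly}) which fails to be semisimple; by Theorems \ref{thm:JcIso} and \ref{thm:IsoQuotients} and the equivalence $\mathrm{rep}_K(Q,\cali_f)\simeq\mathrm{mod}\text{-}A$ this suffices. In every case I take $M_a=K^2$ for each vertex $a$ and $M_\gamma=\mathrm{id}$ for every arrow $\gamma$ \emph{except} along one or two distinguished edges; the identity assignments automatically satisfy the relations by Corollary \ref{lemm:trivialLocalReps}(a), since $1$ is a root of $\tilde f_{m-1}$ for every $m$. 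Non--semisimplicity is proved by \emph{subspace analysis} at a carefully chosen vertex, in the style of Examples \ref{eg:twoHeavyEdges} and \ref{eg:twoHeavyEdgesSimple}: the identity arrows force any subrepresentation $N$ to be constant on the connected components of $G$ obtained by removing the distinguished edges, and the remaining constraints then pin down $N$. As the three conditions may overlap, when handling one I assume the others fail.

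\emph{Infinite--weight edge.} Say $m(a,b)=\infty$, with arrows $\alpha\colon a\to b$ and $\bar\alpha\colon b\to a$; along this edge set $M_\alpha=\bm{1&1\\0&1}=:J$ and $M_{\bar\alpha}=\mathrm{id}$ (there is no relation to check there). For a subrepresentation $N$ the constraints $J(N_a)\subseteq N_b\subseteq N_a$ force $N_a$ to be $J$--invariant and $N_b=N_a$, whether or not $a-b$ is a bridge; so $N$ is one of $0$, the subrepresentation assigning the Jordan line $\langle e_1\rangle$ to every vertex, or $M$, and the middle one has no complement. \emph{Cycle.} I may assume $G$ has no infinite--weight edge and at most one heavy edge. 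Pick a cycle and one of its edges $a-b$, of finite weight $m$, and put $M_\alpha=J$, $M_{\bar\alpha}=J^{-1}$ along it, so $M_\alpha M_{\bar\alpha}=M_{\bar\alpha}M_\alpha=\mathrm{id}$ and the relation $\tilde f_{m-1}(\mathrm{id})=0$ (and its even--weight analogue) holds. Since $a-b$ is not a bridge, $G$ minus that edge is connected, so $N$ assigns a single $J$--invariant subspace to every vertex; again $N$ is $0$, the $\langle e_1\rangle$--everywhere subrepresentation, or $M$, and $M$ is not semisimple. (Alternatively, after contracting the simple edges one may invoke Proposition \ref{prop:Tk}, but the direct construction is cleaner.)

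\emph{Two heavy edges.} I may assume $G$ is a tree with no infinite--weight edge, so the two heavy edges have finite weight $\ge 4$. Among all pairs of heavy edges pick one, $a_1-b_1$ and $a_2-b_2$, at minimal distance; then the unique path joining them in the tree consists of simple edges only, and I orient so it runs from $b_1$ to $b_2$. Along $a_1-b_1$ and $a_2-b_2$ I install the local representations of Example \ref{eg:twoHeavyEdges} (a transvection--type matrix paired with the identity, and a diagonal matrix paired with the identity, with the minor modification there when a weight equals $4$), and the identity along every other arrow. Deleting the two heavy edges splits the tree into three components: one containing $a_1$, one containing $a_2$, and one containing both $b_1$ and $b_2$; a subrepresentation $N$ is constant on each, and the two non--identity matrices on the heavy edges, viewed as operators on the common value of $N$ on the middle component, have eigenspace decompositions of $K^2$ sharing exactly the line $\langle e_1\rangle$, so their only common invariant subspaces are $0$, $\langle e_1\rangle$, $K^2$. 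As in Example \ref{eg:twoHeavyEdges} this yields a proper nonzero subrepresentation with no complement.

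I expect the main obstacle to be the bookkeeping of the two--heavy--edge case: one must verify that extending the three--vertex construction of Example \ref{eg:twoHeavyEdges} by identity maps over the rest of the tree creates no new subrepresentation that would restore semisimplicity, i.e.\ that the system ``$N_{\mathrm{mid}}\subseteq N_{a_1}\cap N_{a_2}$, $P_1(N_{a_1})\subseteq N_{\mathrm{mid}}$, $P_2(N_{a_2})\subseteq N_{\mathrm{mid}}$'' (with $N$ constant on the three pieces, $P_1,P_2$ the matrices placed on the heavy edges) has only the solutions with all three spaces equal to $0$, $\langle e_1\rangle$, or $K^2$. Handling the exceptional weight $4$ (where the relevant matrices are singular) uniformly with weights $\ge 5$, and checking the analogous pinning--down across a possible bridge in the infinite--edge case, are the other spots needing care; all of them reduce to the linear--algebra computations already carried out in Examples \ref{eg:twoHeavyEdges}--\ref{eg:twoHeavyEdgesSimple} and Proposition \ref{prop:edgeLoopRep}.
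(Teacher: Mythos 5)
Your proposal is correct and follows essentially the same route as the paper's proof: two-dimensional spaces at every vertex, identity maps away from one or two distinguished edges (justified by Corollary \ref{lemm:trivialLocalReps}), a Jordan block on an infinite-weight or cycle edge, the local representations of Example \ref{eg:twoHeavyEdges} on the two heavy edges, and subspace analysis showing the $\langle e_1\rangle$-subrepresentation has no complement. The only differences are cosmetic: the paper merges the infinite-weight and cycle cases into one construction (using $J_x$ and $\lambda_m J_x^{-1}$, with the monodromy $M_C$ playing the role of your single $J$-invariance condition) and contracts the simple edges before installing the two-heavy-edge local representations, whereas you keep the tree intact and propagate constancy of subrepresentations along the identity arrows.
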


\begin{prop} \label{prop:cImpliesA} 
    If $G$ contains a cycle or has an edge with infinite weight or has at least
    two heavy edges, then mod-$A$ contains an infinite set of pairwise non-isomorphic simple modules.
\end{prop}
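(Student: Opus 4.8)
The plan is to reduce, using the two tools that are insensitive to rescaling of the defining polynomials --- Morita equivalence under quiver contractions (Theorem~\ref{thm:MoritaEquivalence}) and the pendant-extension device (Proposition~\ref{prop:mixingSimples}) --- to a short list of ``base'' Coxeter diagrams, and then to exhibit an infinite family of pairwise non-isomorphic simples for each, mostly by quoting Examples~\ref{eg:twoHeavyEdgesSimple} and~\ref{eg:LaurentPolynomials}. Both tools are monotone for the property ``mod-$A$ has infinitely many simple modules up to isomorphism'': a Morita equivalence transports isomorphism classes of simples bijectively, and if $\hat G=G\setminus\{v\}$ for a pendant vertex $v$ joined to $u$ by an edge of weight $m>3$, then applying Proposition~\ref{prop:mixingSimples} to each member of an infinite family $\{S(i)\}$ of pairwise non-isomorphic simples of $\rep_K(\hat Q,\hat\cali_f)$ yields simples $M^{(i)}\in\rep_K(Q,\cali_f)$ with $M^{(i)}|_{\hat Q}=S(i)$, hence pairwise non-isomorphic; when the pendant edge has weight $3$ the same deletion is a quiver contraction, so again a Morita equivalence. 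I would record at the outset the warning that the naive alternative --- restricting to a full subdiagram and extending representations by zero --- is \emph{not} available, since for an arrow $\alpha$ of finite odd weight $m$ the relation $r_f(\alpha)$ has nonzero constant term (for the polynomials of~\eqref{eq:PowerPoly} it equals $(\alpha\bar\alpha)^{(m-1)/2}-e_{\source(\alpha)}$), forcing $\source(\alpha)$ and $\target(\alpha)$ to have equal dimension in any representation; this is precisely why every reduction must be routed through contractions and Proposition~\ref{prop:mixingSimples} rather than through restriction.

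I would then treat the three (overlapping) cases of the hypothesis. If $G$ is a tree with an edge $\{a,b\}$ of infinite weight, I would contract every simple edge of $G$ and then delete every remaining pendant vertex other than $a$ and $b$; since the infinite edge is never contracted, this leaves the single edge between $a$ and $b$, whose algebra is the path algebra of the two-vertex quiver $a\rightleftarrows b$ (the evaluation ideal is $0$), and the representations $(K,K,\id,\mu\cdot\id)$ for $\mu\in K^{\times}$ are simple and pairwise non-isomorphic by the argument of Lemma~\ref{lemm:DihedralSimples}. If $G$ is a tree with at least two heavy edges, I would again contract all simple edges, producing an all-heavy tree with $\ge 2$ edges (no heavy edge can become a loop, since in a tree its endpoints are joined only by that edge itself, which is not simple); such a tree has a vertex $v$ of degree $\ge 2$ incident to heavy edges of weights $m_1,m_2\in\Z_{\ge 4}\cup\{\infty\}$, and deleting all other leaves via Proposition~\ref{prop:mixingSimples} reduces to the path on vertices $a,v,d$ with $m(a,v)=m_1$ and $m(v,d)=m_2$, to which Example~\ref{eg:twoHeavyEdgesSimple} supplies the infinite family $\{N^{x}\}$.

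The remaining case is that $G$ contains a cycle. Here I would first delete all pendant trees to pass to the $2$-core $G^{*}$ (minimum degree $\ge 2$, still containing a cycle), and then contract all simple edges of $G^{*}$ to reach an all-heavy generalized double quiver. A cycle with at most one heavy edge collapses to a dual pair of loops $x,y$ of weight $m$ at a vertex; in the simplest situation the resulting algebra is $K\langle x,y\rangle/\langle f_{m-1}(x,y),f_{m-1}(y,x)\rangle$, which is the algebra $T_{k}$ when $m$ is odd (hence Morita equivalent to $K[t,t^{-1}]$ for $m=3$, exactly as in Example~\ref{eg:LaurentPolynomials}), the free algebra $K\langle x,y\rangle$ when $m=\infty$, and $K\langle x,y\rangle/\langle (xy)^{k}x-x,\,(yx)^{k}y-y\rangle$ with $k=(m-2)/2$ when $m$ is even --- each having infinitely many one-dimensional simples (take $(x,y)=(c,d)$ with $c\in K^{\times}$ and $cd$ a $k$-th root of unity). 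A cycle carrying at least two heavy edges instead collapses to an all-heavy cycle or to a doubled edge $a\rightrightarrows b$ (the latter handled by the same one-dimensional computation); contracting its remaining odd-weight edges leaves a cycle with only even or infinite weights, across which zero-extension \emph{is} valid, so restricting to a length-$2$ arc gives a path with two heavy edges of weight in $\{4,6,8,\dots\}\cup\{\infty\}$, and Example~\ref{eg:twoHeavyEdgesSimple} applies.

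The main obstacle is the cycle case: the $2$-core need not be a single cycle, so one must argue that an arbitrary all-heavy graph of minimum degree $\ge 2$ can be reduced further --- contracting odd-weight edges (which introduces self-dual loops, cf.\ Remark~\ref{rmk:sqrt} and Definition~\ref{def:generalizedEvaluationIdeals}, as well as doubled edges), and, once only even and infinite weights remain, restricting along an arc --- in a way that provably terminates at one of the base configurations above while each step preserves ``infinitely many pairwise non-isomorphic simples''. Keeping track of the loops and of which collapsed configurations still carry enough simples (a dual pair of loops does; a lone self-dual loop of finite odd weight $m$ gives only the $(m-1)/2$ simples of $K[\varepsilon]/(\varepsilon^{(m-1)/2}-e_v)$, so does not) is the delicate bookkeeping. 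A secondary, routine task is to verify the handful of ``infinitely many one-dimensional simples'' claims for the even-weight loop algebras and doubled-edge configurations that the contractions produce but that are not literally the algebras of Examples~\ref{eg:twoHeavyEdgesSimple} and~\ref{eg:LaurentPolynomials}.
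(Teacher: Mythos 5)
Your reduction strategy is sound for the two tree cases (infinite‑weight edge in a tree; at least two heavy edges in a tree): contracting simple edges, peeling off pendant vertices with Proposition \ref{prop:mixingSimples} applied with $k=1$ (an isomorphism of the lifted simples restricts to an isomorphism on the subquiver, so pairwise non‑isomorphism is preserved), and landing on the two‑vertex quiver or on Example \ref{eg:twoHeavyEdgesSimple} all check out. The genuine gap is the cycle case, and it is not mere bookkeeping. Your plan requires reducing an arbitrary $2$-core --- which may consist of several overlapping cycles --- to one of your base configurations by contracting odd‑weight edges and then zero‑extending along an arc. But contractions of edges lying on cycles produce self‑dual loops and doubled edges, dual pairs of loops at a single vertex are \emph{not} contractible (Definition \ref{def:quiverContraction} requires distinct endpoints), and after a few steps one faces a one‑ or two‑vertex quiver carrying an unbounded number of loops of assorted parities whose representation theory you have not analyzed. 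No terminating reduction scheme is exhibited, and the claim that each intermediate configuration "still carries enough simples" is exactly the content of the proposition in this case; as written, the proof does not close.

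The paper avoids all of this with a direct global construction that you should compare against. When $G$ contains a cycle $C=(v_1,\dots,v_n)$ (or an edge of infinite weight), set $M_s=K^2$ for every vertex $s$, put the Jordan block $J_x=\bm{x&1\\0&x}$ on the single arrow $v_1\ra v_2$, put $\lambda_m\cdot J_x\inverse$ (resp.\ $\id$ in the infinite case) on its dual, where $\lambda_m$ is a root of $\tilde f_{m-1}$, and put the identity on every other arrow. Proposition \ref{prop:edgeLoopRep} and Corollary \ref{lemm:trivialLocalReps} show this lies in $\repqrf$ no matter how many cycles $G$ has. The span of $e_1$ at every vertex is then a subrepresentation $N^x$ that is one‑dimensional at each vertex with all structure maps nonzero scalars, hence simple; and $N^x\not\cong N^y$ for $x\neq y$ because the holonomy $N^x_C$ around $C$ equals $x$ (resp.\ because the two scalars on the infinite edge pin down $x$). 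This one‑parameter family settles the entire cycle and infinite‑weight cases in one stroke, with no reduction of the graph, no $2$-core, and no loop algebras to classify. I would recommend replacing your cycle case with this construction and keeping your (correct) treatment of the tree cases, which is close in spirit to the paper's extension of the $N^x$ of Example \ref{eg:twoHeavyEdgesSimple} by identities.
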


To prove Theorem \ref{thm:repThm2}, we will first prove the ``if'' implication:

\begin{prop}
    \label{prop:Thm2if}
    If $G$ satisfies either Condition \textup{(a)} or Condition \textup{(b)} in Theorem \ref{thm:repThm2}, then
    mod-$A$ has bounded simples.
\end{prop}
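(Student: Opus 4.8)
The strategy in both cases is the same: use a sequence of quiver contractions to replace the double quiver $Q$ of $(W,S)$ by a small generalized double quiver $\bar Q$ for which the simple objects of $\rep_K(\bar Q,\bar\cali_f)$ are visibly bounded in dimension, and then transfer the bound back. The transfer rests on the following observation about a single contraction. Let $(\bar Q,\bar d,\bar m)$ be the contraction of $(Q,d,m)$ along a contractible pair $\{\alpha\colon a\to b,\ \beta\colon b\to a\}$, and let $Q',\cali'_f,\Lambda=KQ'/\cali'_f$ be as in \autoref{sec:MoritaEquivalence}. The isomorphism $KQ/\cali_f\cong\Lambda$ of Proposition~\ref{prop:rerouteIso} preserves dimension vectors, and the Morita equivalence $\Lambda\text{-mod}\simeq(e\Lambda e)\text{-mod}$ of Proposition~\ref{prop:MoritaEquivalence}, with $e=1-e_b$, is given by $M\mapsto eM$; moreover $e_a\Lambda\cong e_b\Lambda$ as $\Lambda$-modules (this is the isomorphism $\phi_1,\phi_2$ in the proof of Proposition~\ref{prop:MoritaEquivalence}). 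Hence a simple $S\in\rep_K(Q,\cali_f)$ and the corresponding simple $\bar S\in\rep_K(\bar Q,\bar\cali_f)$ satisfy $\dim S_a=\dim S_b=\dim\bar S_{v_{ab}}$ and $\dim S_c=\dim\bar S_c$ for $c\notin\{a,b\}$, so $\dim S=\dim\bar S+\dim\bar S_{v_{ab}}\le 2\dim\bar S$. Iterating: if $\bar Q$ is obtained from $Q$ by $r$ contractions, then (using $A\cong KQ/\cali_f$ via Theorems~\ref{thm:JcIso} and~\ref{thm:IsoQuotients}) every simple $A$-module has dimension at most $2^r$ times the largest dimension of a simple object of $\rep_K(\bar Q,\bar\cali_f)$, so it suffices to bound the latter.

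Suppose $G$ satisfies Condition~(a): $G$ has a unique cycle and all its edges are simple. Contracting leaf edges one at a time (each a legitimate simple contraction of a Coxeter diagram, as a leaf vertex has a unique neighbour), then contracting edges of the resulting cycle while it has length $\ge 4$ (legitimate, since adjacent vertices of a chordless cycle of length $\ge 4$ have no common neighbour), reduces $G$ to the triangle $C_3(3)$; by Example~\ref{eg:LaurentPolynomials} (cf.\ Proposition~\ref{prop:Tk} with $m=3$) two more contractions of the double quiver give the one-vertex quiver whose quotient is $K[t,t\inverse]$. Since $K$ is algebraically closed, every finite dimensional simple $K[t,t\inverse]$-module is $1$-dimensional, so by the previous paragraph mod-$A$ has bounded simples.

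Suppose $G$ satisfies Condition~(b): $G$ is a tree with exactly two heavy edges, each of weight $4$ or $5$. Contracting all simple edges (legitimate by Remark~\ref{rmk:ignoreSimpleEdges}) leaves a connected graph with two edges, i.e.\ the path $a-b-c$ with weights $m_1=m(a,b),m_2=m(b,c)\in\{4,5\}$, and by Theorem~\ref{thm:MoritaEquivalence} it is enough to bound the dimensions of simple objects of $\rep_K(Q,\cali_f)$ for the double quiver $Q$ of this path, with arrows $\alpha\colon a\to b$, $\beta\colon b\to a$, $\gamma\colon b\to c$, $\delta\colon c\to b$. Let $M$ be such a simple. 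Since $(\ker M_\alpha,0,0)$ and $(0,0,\ker M_\delta)$ are subrepresentations, either $M$ is one of the $1$-dimensional representations supported at $a$ or at $c$, or both $M_\alpha,M_\delta$ are injective; in the latter case, if $M_a=0$ or $M_c=0$ then $M$ is a representation of the double quiver of a single heavy edge and $\dim M\le 2$ by Theorem~\ref{thm:ArtinWedderburn}, so we may assume $M_a,M_b,M_c\neq 0$. Set $\phi_1=M_\alpha\circ M_\beta$ and $\phi_2=M_\delta\circ M_\gamma$ in $\mathrm{End}_K(M_b)$. With the path-composition conventions of \autoref{sec:QuiverReps}, the relation $r_f(\beta)$ gives $\phi_1^2=\phi_1$ when $m_1=4$ and $\phi_1^2=\id$ when $m_1=5$ (similarly $\phi_2,m_2$), so $\phi_1,\phi_2$ each satisfy a separable quadratic; and injectivity of $M_\alpha$ (resp.\ $M_\delta$) together with $r_f(\alpha)$ (resp.\ $r_f(\delta)$) forces $M_\beta\circ M_\alpha$ to be $\id_{M_a}$ or invertible, hence $M_\beta$ (resp.\ $M_\gamma$) to be surjective. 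For any subspace $W\se M_b$ invariant under $\phi_1$ and $\phi_2$, the triple $(M_\beta(W),W,M_\gamma(W))$ is a subrepresentation of $M$, which is zero exactly when $W=0$ and equal to $M$ exactly when $W=M_b$ (using surjectivity of $M_\beta,M_\gamma$). As $M$ is simple, $M_b$ is a simple module over the subalgebra $\Gamma\se\mathrm{End}_K(M_b)$ generated by $\phi_1,\phi_2$; since $\mathrm{char}\,K=0$ we may replace $\phi_1,\phi_2$ by idempotents generating the same algebra, and it is classical that over an algebraically closed field every simple module of an algebra generated by two idempotents has dimension $\le 2$ (given an eigenvector $w$ of $\phi_1\phi_2$, the span of $w$ and $\phi_1 w$ is $\Gamma$-invariant). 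Hence $\dim M_b\le 2$ and $\dim M\le\dim M_a+\dim M_b+\dim M_c\le 3\dim M_b\le 6$. Transferring back along the simple-edge contractions, mod-$A$ has bounded simples.

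The routine work is confined to Condition~(b): checking the exact form of the relations on $\phi_1,\phi_2$ under the composition conventions, and that $M_\beta,M_\gamma$ are surjective so that $W=M_b$ really recovers all of $M$. The fact about algebras generated by two idempotents is standard. The one point requiring mild care — and the main thing to get right — is that the Morita equivalences of \autoref{sec:MoritaEquivalence} restrict to finite dimensional modules with the stated control on dimension vectors; this is immediate from the explicit functor $M\mapsto eM$ and is already implicit in the identification $\mathrm{mod}\text{-}A\simeq\rep_K(Q^{(i)},\cali^{(i)}_f)$ used throughout Section~\ref{sec:A-modTools}.
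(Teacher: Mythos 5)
Your proof is correct and follows the same overall route as the paper: contract to the cycle and invoke $K[t,t^{-1}]$ in case (a); contract to the path $a-b-c$ and bound $\dim M_b$ via the two operators $\phi_1=M_\alpha M_\beta$, $\phi_2=M_\delta M_\gamma$ in case (b), arriving at the same bound of $6$. Two points of difference are worth recording. First, you make explicit the dimension bookkeeping under the Morita equivalences ($\dim S\le 2\dim\bar S$ per contraction, via $e_a\Lambda\cong e_b\Lambda$ and $M\mapsto Me$); the paper simply asserts at the start of \autoref{sec:repProofs} that ``having bounded simples'' is Morita-invariant, so your version is more careful on a point the paper leaves implicit. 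Second, for the final linear-algebra step in case (b) you quote the classical fact that a simple module over an algebra generated by two idempotents has dimension at most $2$, whereas the paper does an explicit eigenspace/Jordan-block computation producing a $\le 2$-dimensional invariant subspace of $M_b$ (and thereby proves the slightly stronger statement that \emph{every} module contains a submodule of dimension $\le 6$). The classical fact is genuine, but your parenthetical justification is off: for $w$ an eigenvector of $\phi_1\phi_2$, the span $\langle w,\phi_1w\rangle$ need not be $\phi_2$-invariant. The standard fix is to take $w$ an eigenvector of $\phi_1\phi_2\phi_1$ restricted to $\operatorname{im}\phi_1$ (so $\phi_1w=w$ and $\phi_1\phi_2w=\mu w$) and use $\langle w,\phi_2w\rangle$, which is invariant under both idempotents; with that one-line repair the argument is complete.
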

\noindent
To prove the ``only if'' implication of Theorem \ref{thm:repThm2}, we again prove its contrapositive. Doing so
requires describing the situations where Conditions (a) and (b) in the theorem fail under the assumption that $G$
is not a tree, has an edge with infinite weight, or has at least two heavy edges. A moment's thought reveals that
we may formulate the contrapositive as follows:

\begin{prop}
    \label{prop:Contrapositive}
    The dimensions of the simple modules in mod-$A$ have no upper bound if $G$ satisfies one of the following conditions:
\begin{enumerate}
    \item $G$ contains a unique cycle as well as a heavy edge;
    \item $G$ contains at least two cycles;
    \item $G$ is a tree and has exactly two heavy edges; moreover, one of these heavy edges has weight at least 6;
    \item $G$ is a tree and has at least three heavy edges.
\end{enumerate}
\end{prop}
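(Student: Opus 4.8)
The plan is to prove each of (a)--(d) by the same recipe: use quiver contractions (Theorem~\ref{thm:MoritaEquivalence}) to replace $A$ by a Morita equivalent algebra $K\bar Q/\bar\cali_f$ attached to a very small generalized double quiver $\bar Q$, and then exhibit objects of $\mathrm{rep}_K(\bar Q,\bar\cali_f)$ that are simple and of arbitrarily large $K$-dimension. This suffices because ``having bounded simples'' is a Morita invariant: an equivalence $\text{mod-}A\simeq \text{mod-}(K\bar Q/\bar\cali_f)$ matches simple modules bijectively and changes their dimensions only by a fixed multiplicative constant. Throughout, $\{f_n\}$ denotes the family~\eqref{eq:PowerPoly}; the elementary fact I would use constantly is that $1$ is a root of every $\tilde f_{m-1}$, so by Proposition~\ref{prop:edgeLoopRep}.(a) every arrow $\gamma$ of any finite weight admits local representations in which $M_\gamma$ is an \emph{arbitrary} invertible operator $g$ and $M_{\bar d(\gamma)}=g\inverse$. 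In particular every simple edge contracts, and --- allowing, as Theorem~\ref{thm:MoritaEquivalence} does, contractions along cycle edges that introduce double arrows --- every cycle contributes a single free ``monodromy'' operator at a base vertex.

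For (b) I would not contract at all: fixing a spanning tree $T$ of $G$ and two edges $e_1,e_2\in G\setminus T$, I build for each $n$ and each pair $g_1,g_2\in\mathrm{GL}_n(K)$ a representation $M$ with $M_u=K^n$ for all $u$, every $M_\gamma$ invertible and $M_{\bar d(\gamma)}=M_\gamma\inverse$, normalized so that $T$-edges act by identities and the monodromies around the cycles closed up by $e_1,e_2$ are $g_1,g_2$; a subrepresentation is then determined by its value at one vertex, which must be a common invariant subspace of $\{g_1,g_2\}$, so a generic choice makes $M$ simple. For (a), contracting all simple edges and branches leaves one of two pictures: if the heavy edge, of weight $m$, lies on the unique cycle, one gets a single vertex with a dual pair of loops of weight $m$, giving $K\bar Q/\bar\cali_f=K\langle\eta,\bar\eta\rangle/\langle f_{m-1}(\eta,\bar\eta),f_{m-1}(\bar\eta,\eta)\rangle$ --- the algebra $T_k$ of~\eqref{eq:Tk} with $k=(m-1)/2\ge2$ when $m$ is odd, and for even $m$ the ``generalized-inverse'' algebra (for $m=4$, $K\langle x,y\rangle/\langle xyx-x,\ yxy-y\rangle$); if the heavy edge lies off the cycle, one gets a vertex carrying an invertible loop joined by the heavy edge to one further vertex, which for odd $m$ collapses further to a quotient of $K[\Z\ast\Z/k]$ with $k\ge2$ and for even $m$ is a copy of $K[t,t\inverse]$ with a generic idempotent adjoined. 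In each case I would produce simple modules of every dimension $n$ by a generic-operators argument in the spirit of Examples~\ref{eg:twoHeavyEdgesSimple} and~\ref{eg:LaurentPolynomials}: realize the free (or finite-order) generators as a regular semisimple $g\in\mathrm{GL}_n(K)$ together with a generic finite-order element or generic idempotent, and observe by a Schubert-calculus-style codimension count that a subspace simultaneously compatible with a generic frame and a generic few-block eigenspace decomposition must be $0$ or everything.

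For (c) and (d), since $G$ is a tree every simple contraction is legitimate (Remark~\ref{rmk:ignoreSimpleEdges}), so contracting all simple edges leaves a tree all of whose edges are heavy. In (c) this tree is the path $a-b-c$ with weights $m_1\ge4$, $m_2\ge6$ --- exactly the hypothesis of Example~\ref{eg:46simple}, which for each irreducible $\rho\colon S_q\to\mathrm{GL}(V)$ with $\dim V$ unbounded produces a simple representation with value $V$ at $b$. In (d) the tree has at least three heavy edges; if one has weight $\ge6$ I apply Example~\ref{eg:46simple} to an adjacent pair of heavy edges and then propagate simplicity outward leaf-by-leaf via Proposition~\ref{prop:mixingSimples} (each remaining heavy edge has weight $>3$), so I may assume all heavy edges have weight $4$ or $5$; contracting the (odd) weight-$5$ edges turns each into a self-dual loop $\varepsilon$ with $\varepsilon^2=e$, leaving a small quiver in which some vertex $v$ carries at least three of the relevant operators --- involutions from the $\varepsilon$'s and idempotents $M_\alpha M_\beta$ from the weight-$4$ edges at $v$ --- whereupon the same generic-operators/codimension argument (three or more operators of this kind never form a virtually abelian configuration) yields simple representations with $M_v=K^n$ for all $n$.

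The step I expect to be hardest is the even-weight heavy edges --- above all weight $4$ --- in (a) and (d): such an edge is not contractible, so I cannot reduce to a transparent group algebra of a free product, and it contributes at a vertex only an idempotent $M_\alpha M_\beta$ rather than a finite-order operator. Verifying that the resulting ``test algebras'' (the generalized-inverse algebra, $K[t,t\inverse]$ with an adjoined idempotent, and the three-operator algebras mixing involutions and idempotents) genuinely have simple modules of every dimension is the technical heart of the proof; in each case this reduces to a codimension computation showing that a generic tuple of eigenspace decompositions of prescribed dimension types admits no common compatible subspace besides $0$ and the ambient space, together with a routine check that the representations so built satisfy the relations of $\bar\cali_f$ and, where the subspace $N_v=0$ must be ruled out, that the linking maps between vertices are injective. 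The remaining bookkeeping --- which contractions are legal in the presence of cycles, and using Proposition~\ref{prop:mixingSimples} to extend simples across the extra branches of a tree --- is routine.
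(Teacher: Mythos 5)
Your overall strategy (contract, then exhibit explicit simple representations of unbounded dimension via subspace analysis) matches the paper's, and your treatments of (b) and (c) are essentially sound — (c) is exactly the paper's route through Example~\ref{eg:46simple}, and (b) differs only in substituting a genericity argument for the paper's concrete pair $\rho(\sigma),\rho(\tau)$ of $S_q$-generators. But case (d) contains a genuine gap. After contracting simple edges, the three heavy edges may form a \emph{path} $x-y-u-v$ (the graph $G_1$ of Figure~\ref{fig:threeHeavyEdges}), and if the weights are, say, $(4,4,4)$ or $(5,4,5)$, then after contracting the odd-weight edges no vertex carries three of your ``relevant operators'': each of the two interior vertices sees exactly two (one per incident heavy edge or loop), and Theorem~\ref{thm:repThm2}(b) shows that two such operators at a vertex can force \emph{bounded} simples. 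So your reduction to ``a vertex with at least three operators'' is simply unavailable in these configurations, and you have no argument for them. The paper's mechanism here is different and essential: it takes the two-heavy-edge subtree on $\{x,y,u\}$, uses Proposition~\ref{prop:cImpliesA} to produce $n+1$ pairwise non-isomorphic (small) simples of that subalgebra, forms their direct sum, and then uses Proposition~\ref{prop:mixingSimples} across the third heavy edge $u-v$ to weld them into a single simple of dimension $>n$. That ``mixing'' step is what replaces the missing third operator, and nothing in your proposal plays its role for the path case.

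A secondary but real issue is that in (a) and (d) you repeatedly defer to ``a Schubert-calculus-style codimension count'' showing that generic tuples of eigenspace decompositions admit no common compatible subspace. The paper explicitly declines to carry out such counts (its remark on Schubert calculus offers them only as a sketched alternative) and instead gives concrete constructions precisely where genericity is least transparent: for the weight-$4$ loop pair in (a)(ii) the algebra is $K\langle x,y\rangle/\langle xyx-x,\,yxy-y\rangle$, whose generators are \emph{not} invertible, and the paper's raising/lowering operators on $K^n$ — not a generic invertible plus a generic idempotent — are what produce the simple of dimension $n$; and for the heavy-edge-off-the-cycle subcase of (a) the paper again uses the mixing lemma rather than an ``adjoined generic idempotent.'' As written, the technical heart of your proof for (a) and (d) is asserted rather than proved, and in (d) the assertion is applied to a configuration where its hypothesis fails.
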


We have reduced the proofs of Theorems \ref{thm:repThm1} and \ref{thm:repThm2} to the proofs of Propositions
\ref{prop:bImpliesA}, \ref{prop:cImpliesA}, \ref{prop:Thm2if}, and \ref{prop:Contrapositive}, which we will give in
\autoref{sec:repProofs}.  Note that all these theorems and propositions are stated without any reference to any
quivers. On the other hand, the proofs in \autoref{sec:repProofs} will all use quiver representations and rely
heavily on the techniques and examples of Section \ref{sec:A-modTools}. It is also worth noting that part of the
proof of Proposition \ref{prop:Contrapositive} will use Proposition \ref{prop:cImpliesA}: to obtain desired simple
representations for the former proposition, we will sometimes form direct sums of simple representations promised
by latter proposition and then ``enlarge'' the direct sums using Proposition \ref{prop:mixingSimples}.

\subsection{Proofs}
\label{sec:repProofs}

Let $Q$ be the double quiver of $G$, let $\{f_n\}$ be the uniform  family of polynomials defined by Equation
\eqref{eq:PowerPoly}, and let $\cali_f$ be the evaluation ideal of $\{f_n\}$ in $K Q$. We prove Propositions
\ref{prop:bImpliesA}-\ref{prop:Contrapositive} by proving the same conclusions for the equivalent category
$\rep_K (Q,\cali_f)$ in this subsection. Of the four propositions, we first prove Proposition \ref{prop:Thm2if}.
The other three propositions all state that mod-$A$ contains modules with certain properties, so we will prove them
by explicit construction of suitable representations in $\repqrf$. Since the properties of mod-$A$ that we are
interested in, namely, being semisimple, having finitely many simples, and having bounded simples, are all
preserved under Morita equivalences, when dealing with $\repqrf$ we may assume that certain contractions have been
performed on $Q$ and thus effectively deal with a category of the form $\rep_K(\bar Q,\bar \cali_f)$ from
\autoref{sec:repAnalysis}. For instance, by Example \ref{eg:treeContraction} and Remark \ref{rmk:operators}, if the
Coxeter diagram $G$ is the tree from \autoref{fig:treeContraction} then we may study mod-$A$ via the category
$\brepqrf$ for the generalized double quiver $\bar Q$ from \autoref{fig:57}. 

As final preparation for our proofs, we fix some notation and terminology for cycles in the Coxeter diagram $G$.
Given a cycle $C=(v_1,v_2,\dots,v_n)$ with $n\ge 3$ vertices in $G$, we say $C$ has \emph{length} $n$, set
$v_{n+1}:=v_1$, define $V_C=\{v_1,\dots, v_n\}$, and define
\[
    E_C=\{v_i-v_{i+1}:1\le i\le n\}.
\]
We call the edges in $E_C$ the \emph{sides} of $C$ and define a \emph{diagonal in $C$} to be a edge in $G$ that
connects two vertices in $C$ but does not lie in $E_C$.  We say $C$ is a \emph{minimal} cycle in $G$ if $C$ has no
diagonals (a diagonal in $C$ would break $C$ into two shorter cycles). For each $1\le i\le n$, we let
$m_{C,i}=m(v_i,v_{i+1})$ and denote the arrows $v_i\ra v_{i+1}$ and $v_{i+1}\ra v_i$ in the double $Q$ of $G$ by
$\alpha({C,i})$ and $\beta(C,i)$, respectively. For a representation $M\in \rep_K Q$, we let 
\[
    M_C:=M_{\alpha(C,n)}\circ \dots\circ M_{\alpha(C,2)}\circ M_{\alpha(C,1)},
\]
\[
    \bar
    M_C:= M_{\beta(C,1)}\circ  M_{\beta(C,2)}\circ \dots\circ 
    M_{\beta(C,n)}.
\]
If it is clear what $C$ is from context, then we omit $C$ and write $m_i, \alpha_i, \beta_i, M_i$ and $\bar M_i$
for $m_{C,i}, \alpha(C,i), \beta(C,i), M_{\alpha(C,i)}$ and $M_{\beta(C,i)}$, respectively.

\begin{proof}[Proof of Proposition \ref{prop:Thm2if}]
It suffices to show that mod-$A$ or $\rep_K(Q,\cali_f)$ has bounded simples if $G$ satisfies one of the conditions
in Theorem \ref{thm:repThm2}. To do so, first assume that $G$ satisfies Condition (a), i.e., that $G$ contains a
unique cycle and has only simple edges. Let $C$ be the unique cycle. Then $C$ is necessarily minimal.  By applying
simple graph contractions if necessary, we may assume that $G$ is exactly $C$ in the sense that $V_C$ contains all
the vertices of $G$ and $E_C$ contains all the edge of $G$. But then the algebra $A$ is Morita equivalent to the
Laurent polynomial ring $\cala= K[t,t\inverse]$ by Example \ref{eg:LaurentPolynomials}. As $\cala$ is commutative,
every simple module of $\cala$ has dimension 1, so mod-$A$ has bounded simples.
 
Next, suppose that $G$ satisfies Condition (b), i.e., that $G$ is a tree with exactly two heavy edges and the edges
have weights $m_1,m_2\in \{4,5\}$.  Applying simple graph contractions on $G$ if necessary, we may assume that $G$
and $Q$ are as pictured in \autoref{fig:twoHeavyEdges}. Let $M\in \rep_K(Q,\cali_f)$. Let
$\alpha,\beta,\gamma,\delta$ be as in \autoref{fig:twoHeavyEdges} and let $\phi_1=M_\alpha M_\beta, \phi_2=M_\delta
M_\gamma$. Let $i\in \{1,2\}$. Then $\phi_i$ are diagonalizable by Proposition \ref{prop:edgeLoopRep}.(c). Since
$f_3=x^3-x$ and $f_4=x^4-1$, it follows that if $m_i =4$, then $\phi_i^2 =\phi_i$ and the eigenvalues of $\phi_i$
lie in the set $\{0,1\}$; if $m_i =5$, then $\phi_i ^2 =\id$ and the eigenvalues of $\phi_i$ lie in the set
$\{-1,1\}$. Now let $n=\dim (M_b)$ and suppose the eigenspace decomposition of $M_b$ relative to $\phi_1$ and
$\phi_2$ are $M_b=E_1\oplus E_{2}$ and $M_b=F_1\oplus F_{2}$, respectively, with $E_1, F_1$ being the eigenspaces
for the eigenvalue 1 and $E_2,F_2$ being the eigenspaces for the eigenvalue 0 or -1. We claim that $M$ contains a
submodule of dimension at most 6. The claim would imply that $\rep_K(Q,\cali_f)$ has bounded simples, as desired.

To prove the claim, first note that if $E_i\cap F_j\neq 0$ for some $i,j\in \{1,2\}$, then any nonzero vector $v\in
E_i\cap F_j\se M_b$ must generate a submodule $N$ of $M$ where $N_a, N_b, N_c$ are the spans of $M_\beta(v), v,
M_\gamma(v)$, respectively. The module $N$ has dimension at most 3, proving the claim.  Otherwise, we must have
$n=2k$ for some positive integer $k$ and $\dim(E_1)=\dim(E_2)=\dim(F_1)=\dim(F_2)=k$. In this case, we may choose a
suitable basis $B$ for $M_b$ so that the matrices of $\phi_1$ relative to $B$ is the block diagonal matrix 
\[ 
    [\phi_1]_B= \begin{bmatrix} I_k & 0 \\ 0 & D\\
\end{bmatrix} 
\] 
where $I_k$ is the $k\times k$ identity matrix, $D=0$ if $m_1=4$, and $D=-I_k$ if $m_1 = 5$.  Further, by choosing
a basis $B'$ of $M_b$ for which the change-of-basis matrix $P$ from $B'$ to $B$ is of the block diagonal form 
\[ 
    P= \begin{bmatrix} P_1 & 0 \\ 0 & P_2 \end{bmatrix} 
\] 
with suitable $k\times k$ matrices $P_1, P_2$, we can ensure that 
\[ 
    [\phi_1]_{B'}=P\inverse [\phi_1]_B P = [\phi_1]_B, \quad [\phi_2]_{B'} = P\inverse [\phi_2]_B P= \begin{bmatrix} A_{11} & A_{12}\\ A_{21} & A_{22} \end{bmatrix} 
\] 
where each $A_{ij}$ is $k\times k$ and $A_{11}, A_{44}$ are in Jordan canonical form. Suppose $B'=\{v_1,\cdots,
v_n\}$. Then $\phi_1(v_1)=v_1$ and $\phi_2(v_1)=\lambda v_1 +v$ where $\lambda$ is the top left entry in $A_{11}$
and $v$ in the span $\ip{v_{k+1}, v_{k+2},\cdots, v_{n}}$. It follows that $\ip{v_1,\phi_2(v_1)}=\ip{v_1, v}$.
Moreover, since either $\phi_2^2=\phi_2$ or $\phi_2^2 =1$, we have 
\[ 
    \phi_2(v)=\phi_2(\phi_2(v_1)-\lambda v_1)=\phi_2^2(v_1)-\lambda \phi_2(v_1)\in \ip{v_1,\phi_2(v_1)}= \ip{v_1,v}.
\] 
It follows that the space $V:=\ip{v_1,v}$ is invariant under both $\phi_1$ and $\phi_2$, so it generates a
subrepresentation $N$ of $M$ such that $N_b=V$ and $\dim(N)\le 6$. This completes the proof.  
\end{proof}

\begin{proof}[Proof of Proposition \ref{prop:bImpliesA}]
We need to construct a non-semisimple representation $M\in \rep_K (Q,\cali_f)$ when $G$ contains a cycle, an edge
with infinite weight, or at least two heavy edges. We first deal with the case that $G$ contains an edge with
infinite weight or a cycle.  Let $\{a,b\}$ be an edge with infinite weight in $G$ if such an edge exists;
otherwise, let $a,b$ be the vertices $v_1,v_2$ from a cycle $C=(v_1,v_2,\dots,v_n)$ in $G$, respectively. Denote
the arrow $a\ra b$ by $\alpha$ and the arrow $b\ra a$ by $\beta$.  To construct $M$, first let $M_s=K^2$ for all
$s\in Q_0$. Let $m=m(a,b)$, let $\lambda_m$ be  a root of the polynomial $\tilde f_{m-1}$ if $m<\infty$, let $x$ be
an arbitrary nonzero scalar in $K$, and let 
\[
J_x=\bm{x & 1\\ 0 &
    x}, \quad 
L = 
\begin{cases}
    I_2 & \text{if $m=\infty$};\\
    \lambda_m \cdot J\inverse & \text{if $m<\infty$}.
\end{cases}
\]
Let $M_\alpha, M_\beta$ be the maps given by $J_x$ and $L$, respectively, then let $M_\gamma=\id$ for all arrows
$\gamma\in Q_1\setminus\{\alpha,\beta\}$.  The assignment  $M:=(M_s,M_\gamma)_{s\in Q_0, \gamma\in Q_1}$ defines a
representation in $\repqrf$ by Proposition \ref{prop:edgeLoopRep} and Corollary \ref{lemm:trivialLocalReps}. It is
clear that $M^x$ has a subrepresentation $N$ with $N_s=\ip{e_1}$, the span of the first standard basis vector, for
all $s\in Q_0$. On the other hand, if we set $\phi=\pbpa$ in the case $m=\infty$ and set $\phi=M_C$ otherwise, then
$\phi$ is an endomorphism of $M_a$ given by the matrix $J_x$ which is in Jordan canonical form and has a single
$2\times 2$ Jordan block, therefore the subspace $N_a$ of $M_a$ cannot have a complement in $M_a$ that is invariant
under $\phi$. It follows that $N$ has no complement in $M$ as a subrepresentation, so $M$ is not semisimple.

It remains to consider the case where $G$ has no cycles or edges of infinite weight but has at least two heavy
edges. By applying graph contractions, we may ensure that $G$ contains a subgraph of the form shown in
\autoref{fig:twoHeavyEdges} and $Q$ contains a subquiver of the form shown in the same Figure. We may define a
representation $M\in \repqrf$ by setting $M_a, M_b, M_c, M_\alpha, M_\beta, M_\gamma, M_\delta$ as in Example
\ref{eg:twoHeavyEdges} and setting $M_s=K^2$ and $M_\zeta=\id$ for all vertices $s\in Q_0\setminus\{a,b,c\}$ and
all arrows $\zeta\in Q_1\setminus\{\alpha,\beta,\gamma,\delta\}$, because doing so amounts to assembling a
consistent collection of local representations in the sense of \autoref{sec:repAnalysis}. Moreover, it is clear
that $M$ has a subrepresentation $L$ with $L_s=\ip{e_1}$ for all $s\in Q_0$.  By the subspace analysis in Example
\ref{eg:twoHeavyEdges}, any subrepresentation $N$ of $M$ must have $N_b=\ip{e_1}=L_b$, therefore the
subrepresentation $N$ has no complement in $M$. It follows that $M$ is not semisimple, and we are done.
\end{proof} 

\begin{proof}[Proof of Proposition \ref{prop:cImpliesA}]
We need to find infinitely many simple representations in $\repqrf$ when $G$ contains a cycle, an edge with
infinite weight, or at least two heavy edges. We keep the notation from the previous proof and start with the case
that $G$ contains an edge with infinite weight or a cycle. In this case, let $M$ and $N$ be as in the previous
proof. Denote $N$ by $N^x$ to reflect the fact that $N$ depends on the value of the scalar $x$ because the matrix
$J_x$ does. Then $N^x$ is clearly simple, so to show that $\repqrf$ has infinitely many simples it suffices to
verify that $N^x\not\cong N^y$ whenever $x\neq y$.  If $m=\infty$, then we can do so by using basic linear algebra
to show that there do not exist linear isomorphisms such that $\phi_b N^x_\alpha = N^y_\alpha \phi_a$ and $\phi_a
N^x_\beta= N^y_\beta \phi_b$ simultaneously.  If $m\neq \infty$, then by definition we have $a=v_1,b=v_2$ for
vertices $v_1,v_2$ in a cycle $C=(v_1,v_2,\dots,v_n)$, and we can show that $N^x\not\cong N^y$ if $x\neq y$ by
showing that no linear map $\phi_1: M_a\ra M_a$ can satisfy $N^x_C\phi_1=\phi_1 N^y_C$ when $x\neq y$. We omit the
details.

It remains to deal with the case where $G$ has no cycle or edges of infinite weight but has at least two heavy
edges. As in the previous proof, we may assume that $G$ contains the Coxeter diagram in \autoref{fig:twoHeavyEdges}
as a subgraph and $Q$ contains the quiver there as a subgraph. To obtain an infinite family of simple
representations in $\repqrf$, we extend the simple representations of the form $N^x$ from Example
\ref{eg:twoHeavyEdgesSimple} as follows:
\begin{enumerate}
\item 
    If $N^x_c=K^2$, then we extend $N^x$ by setting $N^x_s=K^2$ and
    $N^x_\zeta=\id$ for all vertices $s\in Q_0\setminus\{a,b,c\}$ and all
    arrows $\zeta\in Q_1\setminus\{\alpha,\beta,\gamma,\delta\}$. This
    specifies a representation in $\repqrf$, and the extended representation
    $N^x$ is still simple since all the maps $N^x_\zeta$ are isomorphisms. 
\item 
    If $N^x_c=K$, then note that since $G$ contains no cycle, removing the
    edge $b-c$ from $G$ must result in a graph with two connected
    components, one containing $b$ and the other containing $c$. Let $V_b, V_c$
    be the sets of vertices in the first and second component, respectively.
    Then we may extend $N^x$ by setting $N^x_s=K^2$ for all $s\in
    V_b\setminus\{a,b\}$, setting $N^x_s=K$ for all $s\in V_c\setminus\{c\}$,
    and setting $N^x_\zeta=\id$ for all  $\zeta\in
    Q_1\setminus\{\alpha,\beta,\gamma,\delta\}$. It is easy to see that the
    extension gives a simple representation in $\repqrf$ as in Case (a).
\end{enumerate}
To finish the proof, it suffices to show that $N^x\not\cong N^y$ whenever $x\neq y$. This holds by the same
argument used at the end of Example \ref{eg:twoHeavyEdgesSimple}.  
\end{proof}

\begin{proof}[Proof of Proposition \ref{prop:Contrapositive}]
Let $n$ be an arbitrary positive integer larger than 7. We prove the proposition by constructing a simple
representation $M\in\repqrf$ with $\dim (M)>n$ when any of the Conditions (a)-(d) holds: 

(a) Suppose $G$ contains a unique cycle $C=(v_1,v_2,.., v_k)$ and a heavy edge.  We first consider the case where
the set $E_C$ contains a heavy edge.  As in the proof of Proposition \ref{prop:Thm2if}, up to simple graph
contractions we may assume $G$ is exactly $C$.  Without loss of generality, suppose that the edge $\{v_1,v_2\}$ is
heavy and let $m=m(v_1,v_2)$.  Depending on whether $m>4$ or $m=4$, we construct $M$ in one of two ways:

\begin{enumerate}
\item[(i)] If $m>4$, then let $q=n+1$, consider the symmetric group $G=S_{q}$, and consider the partition $(n,1)$
    of $q$. By the theory of Specht modules, the partition gives rise to an irreducible representation $\rho: G\ra
    \mathrm{GL}(V)$ of $G$ over $K$ where $\dim (V)=n$.  Recall from Example \ref{eg:Miller} that since $n>7$ there
    exist elements $\sigma,\tau\in G$ which generate $G$ with orders 2 and 3, respectively, and that  consequently
    we have $\rho(\sigma)^2=\rho(\tau)^3=\id_V$. To define $M$, let $M_v=V$ for all $v\in Q_0$, let 
    \[
    M_{\alpha_1}=\rho(\sigma),\quad M_{\alpha_2}=\rho(\tau)\rho(\sigma)^{-1},\quad M_{\beta_2}=M_{\alpha_2}^{-1},
    \]
    and let $M_{\beta}=\id$ for all $\gamma\in Q_1\setminus\{\alpha_1,\alpha_2,\beta_2\}$. This defines a
    representation by Lemma \ref{lemm:trivialLocalReps}, and clearly we have $\dim(M)>n$. To see that $M$ is
    simple, note that the operator $M_{\beta_1}M_{\alpha_1}:M_{1}\ra M_{1}$ equals $\rho(\sigma)$ and the operator
    $M_{C}: M_1\ra M_1$ equals $\rho(\tau)$. By subspace analysis at $v_1$ similar to the analysis at $y$ in
    Example \ref{eg:Miller}, any subrepresentation $N$ of $M$ must have either $N_{v_1}=0$ or $N_{v_1}=M_{v_1}$.
    Since $M_\gamma$ is an isomorphism for all $\gamma\in Q_1$, it follows that $M$ is simple. 

\item[(ii)] Now suppose $m=4$.  By Example \ref{eg:LaurentPolynomials}, up to contractions we may assume that $Q$
    is the generalized double quiver denoted $Q^{(3)}$ in \autoref{fig:Laurent}. In other words, after relabelling
    vertices and arrows we may assume that $Q$ is the quiver with a unique vertex $v$ along with two loops
    $\alpha,\beta:v\ra v$ and that
\[
    \cali_f=\{f_{m-1}(\alpha,\beta)=\alpha\beta\alpha-\alpha,
    f_{m-1}(\beta,\alpha)=\beta\alpha\beta-\beta\}.
\]
To construct $M$, let $M_v=K^n$, let $B=\{e_1,e_2,\dots, e_n\}$ be the standard basis of $K^n$, and let $M_\alpha,
M_\beta$ be the unique linear maps defined by 
    \[
        M_\alpha(e_i)=
        \begin{cases}
            e_{i+1} & \text{if $1\le i<n$};\\
            0 & \text{if $i=n$},
        \end{cases}
        \quad
        M_\beta(e_i)=
        \begin{cases}
            e_{i-1} & \text{if $1< i\le n$};\\
            0 & \text{if $i=1$}.
        \end{cases}
    \]
Intuitively, we may think of $M_\alpha$ as a ``raising'' operator on $K^n$ and $M_\beta$ as a ``lowering'' operator
in light of their effects on the standard basis elements.  It is easy to check that the above assignments define a
representation in $\repqrf$. Moreover, given any nonzero vector $v\in M_{v}$ we may use the maps $M_\alpha,
M_\beta$ to obtain any basis vector in $B$ up to a scalar, therefore $M$ must be simple.
\end{enumerate}

It remains to deal with the case that all edges in $C$ are simple but some edge in $G$ not in $V_C$ is heavy.
Applying simple graph contractions if necessary, we may assume that some vertex $u\in V_C$ is incident to a heavy
edge $\{u,v\}$ of weight $m=m(u,v)\ge 4$ for some $v\notin V_C$. Without loss of generality, we may also assume
that $u=v_1$.  View $C$ as a graph $G''$ with vertex set $V_C$ and edge set $E_C$, define $G'$ to be the subgraph
of $G$ obtained by adding the vertex $v$ and the edge $\{u,v\}$ to $G''$, and let $Q''$ and $Q'$ be the double
quivers of $G''$ and $G'$, respectively. Let $\cali''_f$ and $\cali'_f$ be the  evaluation ideals of $\{f_n\}$ in
$KQ''$ and $KQ'$.  Then by the proof of Proposition \ref{prop:cImpliesA}, the category $\rep_K(Q'', \cali''_f)$
contains $(n+1)$ pairwise non-isomorphic simple representations $S(1), S(2), \dots, S(n), S(n+1)$. Let
$S=\oplus_{i=1}^{n+1} S(i)$. Then Proposition \ref{prop:mixingSimples} implies that the category
$\rep_K(Q',\cali'_f)$ contains a simple representation $M$ with $\dim(M_u)=\dim (S_u)$.  Finally, we may extend $M$
to a representation $M\in \repcqr$, by using the same idea as in ``Case (b)'' in the proof of Proposition
\ref{prop:cImpliesA}: since $C$ is the only cycle in $G$, removing the edges in $E_C$ from $G$ results in a graph
with $k$ connected components with vertex sets $V_1, V_2,\dots, V_k$ such that $v_i\in V_i$ for all $1\le i\le k$;
we can then extend $M$ by setting $M_a=M_{v}$ for all $a\in V_1\setminus\{v_1,v\}$, setting $M_a=M_{v_i}$ for all
$a\in V_i\setminus\{v_i\}$ for each $2\le i\le k$, and setting $M_\gamma=\id$ for all $\gamma\in Q_1\setminus
Q'_1$.  It is clear that the extended representation is still simple and has dimension larger than $n$.

(b) Suppose $G$ contains two cycles. In light of Part (a), to construct the desired representation $M$ we may
assume that all edges in $G$ are simple. By considering minimal cycles and applying simple graph contractions if
necessary, we may assume that $G$ contains two minimal cycles which share a vertex, i.e., two minimal cycles of the
form $C=(v_1,v_2,\dots, v_k)$ and $C'=(w_1,w_2,\dots, w_l)$ where $v_1=w_1$.  Furthermore, while $C$ and $C'$ may
share an edge, we may write the tuples $C, C'$ in such a way that $v_2\neq w_2$. Denote the arrows $v_1\ra v_2,
v_2\ra v_1, w_1\ra w_2$ and $w_2\ra w_1$ by $\alpha_1,\beta_1,\alpha'_1$ and $\beta'_1$, respectively.  To
construct the desired representation $M\in \repqrf$, consider the representation $\rho: S_{q}\ra \mathrm{GL}(V)$
from Part (a).(i), let $\sigma,\tau$ be the same elements in $S_q$ as before, let $M_s=V$ for all $s\in Q_0$, let 
\[
        M_{\alpha_1}=\rho(\sigma),\quad
        M_{\beta_1}=\rho(\sigma)^{-1},\quad
        M_{\alpha'_1}=\rho(\tau),\quad M_{\beta'_1}=\rho(\tau)^{-1}, 
\]
and let $M_\gamma=\id$ for all arrows $\gamma\in Q_1\setminus\{\alpha_1,\beta_1,\alpha'_1,\beta'_1\}$. This defines
a representation $M\in \repcqr$ by Lemma \ref{lemm:trivialLocalReps}, and it is obvious that $\dim (M)>n$. The
endomorphisms $M_{\beta_1}M_{\alpha_1}$ and $M_{\beta'_1}M_{\alpha'_1}$ of $M_{v_1}$ equal $\rho(\sigma)$ and
$\rho(\tau)$, respectively, therefore $M$ is simple by the same arguments as before.

(c) Suppose $G$ is a tree and has exactly two heavy edges, one of which has weight at least 6. Using simple graph
contractions if necessary, we may assume that $G$ is of the form shown in \autoref{fig:twoHeavyEdges}, with $m_1\ge
4$ and $m_2\ge 6$. Let $q, G$ and $V$ be as in Part (a).(i). Then by Example \ref{eg:46simple}, there exists a
simple representation $M\in \repqrf$ such that $\dim(M)> \dim (V)=n$, as desired.

(d) Suppose that $G$ is a tree and has at least three heavy edges. Using simple graph contractions if necessary, we
may assume that $G$ contains a subgraph of one of the forms shown in \autoref{fig:threeHeavyEdges}.
\begin{figure}[h!] 
    \centering 
    \begin{tikzpicture}
        \node (0) {$G_1$:};
        \node[main node] (1) [right=0.5cm of 0] {$x$};
\node[main node] (2) [right = 1cm of 1] {$y$}; 
\node[main node] (3) [right = 1cm of 2] {$u$};
\node[main node] (4) [right = 1cm of 3] {$v$};

\node (00) [right =1cm of 4] {$G_2$:};
\node[main node] (a1) [below right = 0.4cm and 0.3cm of 00] {$x$};
\node[main node] (a2) [right = 1cm of a1] {$u$}; 
\node[main node] (a3) [right = 1cm of a2] {$y$};
\node[main node] (a4) [above = 1.2cm of a2] {$v$};

\path[draw] 
(1) edge node[above] {\small{$m_1$}} (2) 
(2) edge node[above] {\small{$m_2$}} (3)
(3) edge node[above] {\small{$m_3$}} (4);

\path[draw] 
(a1) edge node[above] {\small{$m_1$}} (a2) 
(a2) edge node[above] {\small{$m_2$}} (a3)
(a2) edge node[left] {\small{$m_3$}} (a4);

\end{tikzpicture} 
\caption{}
\label{fig:threeHeavyEdges}
\end{figure}
In both cases, let $G''$ be the subgraph of $G$ induced by the vertices $x,y,u$, let $G'$ be the subgraph of $G$
obtained by adding the vertex $v$ and the edge $\{u,v\}$ to $G''$, then define $Q'', \cali''_f, Q',\calr'$ as we
did in Part (a). We may produce a representation $M\in \repqrf$ in the same fashion as in Part (a): first, use the
proof of Proposition \ref{prop:cImpliesA} to find $(n+1)$ pairwise non-isomorphic simple representations in
$\rep_K(Q'',\cali''_f)$; second, use Proposition \ref{prop:mixingSimples} to extend the direct sum of these $(n+1)$
simple representations to a simple representation $M$ in $\rep_K(Q',\cali'_f)$; finally, further extend $M$ to a
representation in $\repqrf$ where $M_\gamma=\id$ for all $\gamma\in Q_1\setminus Q'_1$. As before, the extended
representation $M\in \repcqr$ must be simple and satisfy $\dim (M)>n$. This completes the proof.
\end{proof}

\bibliographystyle{alpha}\nocite{*}
\bibliography{J_ref.bib}
\end{document}